\newtheorem{thm}{Theorem}[section]
\newtheorem{lem}[thm]{Lemma}
\newtheorem{prop}[thm]{Proposition}
\newtheorem{claim}[thm]{Claim}
\newtheorem{conj}[thm]{Conjecture}
\newtheorem{thm*}{Theorem}
\theoremstyle{definition}
\newtheorem{definition}[thm]{Definition}
\theoremstyle{remark}
\newtheorem{remark}[thm]{Remark}
\numberwithin{equation}{section}
\def\mrm#1{{\mathrm{#1}}}
\newcommand{\brat}[1]{{\left< #1 \right>}}
\newcommand{\R}{{\mathbb{R}}}
\newcommand{\Z}{{\mathbb{Z}}}
\newcommand{\C}{{\mathbb{C}}}
\newcommand{\Q}{{\mathbb{Q}}}
\newcommand{\D}{{\mathbb{D}}}
\newcommand{\bK}{{\mathbb{K}}}
\newcommand{\bL}{{\mathbb{L}}}
\newcommand{\bF}{{\mathbb{F}}}
\newcommand{\rH}{{\mathrm{H}}}
\newcommand{\ra}{\rightarrow}
\newcommand{\del}{\partial}
\newcommand{\cL}{\mathcal{L}}
\newcommand{\til}[1]{\widetilde{#1}}
\newcommand{\wh}[1]{\widehat{#1}}
\newcommand{\ol}[1]{\overline{#1}}
\newcommand{\lamzero}{\Lambda^0}
\newcommand{\overbar}{\overline}
\newcommand{\om}{\omega}
\newcommand{\ga}{\gamma}
\newcommand{\eps}{\epsilon}
\newcommand{\cA}{\mathcal{A}}
\newcommand{\cB}{\mathcal{B}}
\newcommand{\cC}{\mathcal{C}}
\newcommand{\cD}{\mathcal{D}}
\newcommand{\cF}{\mathcal{F}}
\newcommand{\cG}{\mathcal{G}}
\newcommand{\cH}{\mathcal{H}}
\newcommand{\cJ}{\mathcal{J}}
\newcommand{\cK}{\mathcal{K}}
\newcommand{\cP}{\mathcal{P}}
\newcommand{\cM}{\mathcal{M}}
\newcommand{\rP}{\mathrm{P}}
\newcommand{\rS}{\mathrm{S}}
\newcommand{\fix}{\mathrm{Fix}}
\renewcommand{\hat}{\wh}
\DeclareMathOperator{\rank}{\mathrm{rank}}
\DeclareMathOperator{\Ham}{\mathrm{Ham}}
\DeclareMathOperator{\im}{\mathrm{im}}
\DeclareMathOperator{\id}{\mathrm{id}}
\DeclareMathOperator{\spec}{\mathrm{Spec}}
\DeclareMathOperator{\Spec}{\mathrm{Spec}}
\DeclareMathOperator{\loc}{\mathrm{loc}}
\DeclareMathOperator{\cf}{\mathrm{CF}}
\DeclareMathOperator{\hf}{\mathrm{HF}}
\DeclareMathOperator{\h}{\mathrm{H}}
\DeclareMathOperator{\qh}{\mathrm{QH}}
\DeclareMathOperator{\cz}{\mathrm{CZ}}
\DeclareMathOperator{\pss}{\mathrm{PSS}}
\newcommand{\hlemail}{han.lou@uga.edu}
\newcommand{\msaemail}{marcelo.sarkis.atallah@umontreal.ca}
\begin{document}

\author{Marcelo S. Atallah}
\address{Marcelo S. Atallah, Department of Mathematics and Statistics,
	University of Montreal, C.P. 6128 Succ.  Centre-Ville Montreal, QC
	H3C 3J7, Canada}
\email{\msaemail}

	\author{Han Lou}
\address{Han Lou, Department of Mathematics,
	University of Georgia, Athens, GA 30602, USA}
\email{\hlemail}

\title{On the Hofer-Zehnder conjecture for semipositive symplectic manifolds}

\begin{abstract}
We show that, on a closed semipositive symplectic manifold with semisimple quantum homology, any Hamiltonian diffeomorphism possessing more contractible fixed points, counted homologically, than the total Betti number of the manifold, must have infinitely many periodic points. This generalizes to the semipositive setting the beautiful result of Shelukhin on the Hofer-Zehnder conjecture. The key component of the proof is a new study of the effect of reduction modulo a prime on the bounds on filtered Floer homology that arise from semisimplicity. This relies on the theory of algebraic extensions of non-Archimedean normed fields.

\end{abstract}

\maketitle
\setcounter{tocdepth}{1}
\tableofcontents

\section{Introduction}
\subsection{Introduction and Main Results}\label{sec: intro}
The Hofer-Zehnder conjecture, concerning the existence of infinitely many periodic points in Hamiltonian systems, originated from studies in celestial mechanics. In one of Poincaré's remarkable contributions to the three-body problem, he proved the existence of infinitely many periodic solutions provided the planetary masses are sufficiently small. In attempting to generalize such a result for larger planetary masses, he was led to conjecture the existence of at least two fixed points of any time-one map of an area-preserving isotopy of the annulus satisfying a twist condition on the boundary \cite{poincare1912theoreme}. He proved some particular cases of the conjecture, while Birkhoff established it in general; this is the content of the celebrated Poincaré-Birkhoff theorem \cite{birkhoff1913proof, birkhoff1926extension}. It was later shown \cite{brown1977proof} that a similar proof yields the existence of infinitely many periodic points\footnote{For $\phi:M\ra M$, a point $x\in M$ is periodic if there is an integer $k\geq 1$ such that $\phi^{k}(x)=x$.}. In a similar vein, Franks proved that, without the twisting condition, the existence of an interior fixed point is sufficient to guarantee the existence of infinitely many periodic points \cite{franks1992geodesics, franks1996area}. Furthermore, in what is one of the first results in the direction of the Hofer-Zehnder conjecture, and probably central to its motivation, Franks showed that any time-one map of an area-preserving isotopy of the sphere with at least three fixed points must have infinitely many periodic points. Note that having at least three fixed points is essential since any rotation of the sphere by an irrational fraction of $\pi$ has only two fixed points, which are also the only periodic points, namely, the North and South poles.

A conjecture by Arnol'd \cite{arnold2014stability, arnold1965proprietes}, relating Hamiltonian dynamics to the topology of the ambient symplectic manifold, has been a driving force of symplectic topology. A homological version of the conjecture states that any non-degenerate\footnote{Non-degeneracy means that for every fixed point $x$, the linearization $d\phi_{x}$ of $\phi$ at $x$ does not have $1$ as an eigenvalue. Equivalently, the graph of $\phi$ intersects the diagonal $\Delta$ transversely in $M\times M$.} Hamiltonian diffeomorphism $\phi$ of a closed symplectic manifold must have at least $\dim\h_{*}(M;\bK)$ fixed points, where $\bK$ is a choice of coefficient field. After some particular cases of the conjecture were established \cite{eliashberg1979estimates, fortune_weinstein1985symplectic, fortune1985symplectic, conley1983birkhoff, gromov1985pseudo}, Floer proved it for all symplectically aspherical and spherically monotone symplectic manifolds \cite{Floer1, Floer2, Floer3}. To this end, Floer developed a groundbreaking homology theory generated by the contractible orbits of a Hamiltonian diffeomorphism, inspired by the seminal work of Gromov, who introduced the study of pseudo-holomorphic curves to symplectic topology \cite{gromov1985pseudo}. With the advent of Floer theory the homological lower bound is achieved for the number of contractible fixed points\footnote{A fixed point $x$ of a Hamiltonian diffeomorphism $\phi$, generated by a Hamiltonian function $H$, is said to be contractible whenever the loop $\{\phi_{H}^t(x)\}_{t\in[0,1]}$ is contractible in $M$. This notion is independent of the choice of $H$ since loops of Hamiltonian diffeomorphisms have contractible orbits.} of a Hamiltonian diffeomorphism. Proving the conjecture in full generality, not only in terms of weakening the topological hypothesis on the symplectic manifold but also allowing for more general coefficient rings, is still the subject of ongoing research \cite{Hofer-Salamon, liu1998floer, fukaya1999arnold, ruan1999virtual, piunikhin1996symplectic, abouzaid2021arnold, bai2022arnold, rezchikov2022integral}; all of the proofs use the machinery invented by Floer. From a viewpoint, Franks's result on $\rS^{2}$ indicates that having more fixed points than what is required by the Arnol'd conjecture is sufficient to guarantee infinitely many periodic points. Indeed, any time-one map of an area-preserving isotopy on $\rS^2$  is Hamiltonian and, thus, must have at least two fixed points. Hofer and Zehnder conjectured the following for closed symplectic manifolds.
\begin{conj}[Hofer-Zehnder conjecture]\label{conj: HZ_1}
``One is tempted to conjecture that every Hamiltonian map on a compact symplectic manifold $(M,\om)$ possessing more fixed points than necessarily required by the V. Arnold conjecture possesses always infinitely many periodic orbits..."
\end{conj}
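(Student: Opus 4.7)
\textbf{Proof proposal} (for the main theorem announced in the abstract, which is the instance of Conjecture~\ref{conj: HZ_1} established in this paper: on a closed semipositive $(M,\om)$ with semisimple $\qh(M;\bK)$, if $\phi\in\Ham(M,\om)$ satisfies $N(\phi):=\sum_{x}\dim\hf_{\loc}(\phi,x)>\dim\h_*(M;\bK)$, where the sum is over contractible fixed points, then $\phi$ has infinitely many periodic points).

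The plan is to adapt Shelukhin's strategy from the spherically monotone case to the semipositive setting, using Hamiltonian Floer theory over a Novikov field with mod $p$ coefficients. Argue by contradiction: suppose $\phi$ has only finitely many periodic points. Then for every sufficiently large prime $p$, after a $C^\infty$-small perturbation, one may assume $\phi$ and $\phi^p$ are non-degenerate and that the contractible fixed points of $\phi^p$ are exactly the $p$-th iterates of the contractible fixed points of $\phi$; the local Floer data of $\phi$ at each $x$ persists into $\phi^p$, so $\dim\cf(\phi^p;\Lambda_{\F_p})=N(\phi)$.

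The central tool is a Smith-type inequality for Hamiltonian Floer homology with $\F_p$-Novikov coefficients,
\[
\dim_{\Lambda_{\F_p}}\hf(\phi^p;\Lambda_{\F_p}) \;\geq\; \dim_{\Lambda_{\F_p}}\hf(\phi;\Lambda_{\F_p}),
\]
together with an equality criterion saying that equality forces every generator of $\cf(\phi^p)$ to arise as a $p$-th iterate of a generator of $\cf(\phi)$. Combined with the PSS isomorphism $\qh(M;\Lambda_{\F_p})\cong\hf(\phi;\Lambda_{\F_p})$ and the standing contradiction hypothesis (which would imply the equality case), one obtains $N(\phi)=\dim\h_*(M;\bK)$, contradicting the assumption $N(\phi)>\dim\h_*(M;\bK)$. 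Semisimplicity of $\qh(M;\bK)$ enters by decomposing the Floer complex and the pair-of-pants action along the orthogonal idempotents $e_1,\dots,e_r$: the Smith map preserves each summand, and the equality case must hold in every summand simultaneously, which rigidifies the Floer complex enough to derive the contradiction without losing the excess of fixed points across idempotents.

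The principal obstacle is establishing the Smith inequality and its equality criterion in the semipositive case. In the monotone setting the Novikov ring collapses to a graded polynomial ring in a single variable, and action-energy estimates control Floer moduli spaces tightly; in the semipositive setting one must work with the full Novikov field and handle pseudo-holomorphic sphere bubbles that need not be quantized by a single monotonicity constant. Semipositivity is precisely the hypothesis that ensures that bubbling off negative-Chern-number spheres does not occur generically in cylinder moduli spaces of virtual dimension $\leq 1$, so classical transversality suffices without virtual perturbation. The analytic work thus concentrates on building the $\Z/p$-equivariant Floer chain complex for the $p$-fold iterate, constructing an equivariant PSS map, and proving compatibility with the quantum module structure and with the $p$-th power operation, all while maintaining action filtrations over $\Lambda_{\F_p}$. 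I expect the delicate step to be the equality case of the Smith inequality in the presence of non-trivial quantum corrections, as this is where the semisimple decomposition must be used to prevent cancellations between idempotent summands that could otherwise destroy the numerical rigidity on which the contradiction rests.
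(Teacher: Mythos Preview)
Your proposal misidentifies the central mechanism. The Smith-type inequality you state,
\[
\dim_{\Lambda_{\F_p}}\hf(\phi^p;\Lambda_{\F_p}) \;\geq\; \dim_{\Lambda_{\F_p}}\hf(\phi;\Lambda_{\F_p}),
\]
is vacuous: by the PSS isomorphism both sides equal $\dim_{\bK}\h_*(M;\bK)$ for \emph{every} Hamiltonian diffeomorphism, so there is no ``equality criterion'' to exploit, and no conclusion about generators of $\cf(\phi^p)$ follows. The excess of fixed points, encoded in $N(\phi)$, is invisible at the level of Floer homology; it lives in the \emph{filtered} Floer complex, specifically in the finite bars of the barcode via $N(\phi,\bK)=\dim_{\bK}\h_*(M;\bK)+2K(\phi,\bK)$.

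The paper's argument runs through persistence. The correct Smith inequality (Theorem~\ref{thm: smith}) compares total bar-lengths, $p\cdot\beta_{tot}(\phi,\bF_p)\leq\beta_{tot}(\phi^p,\bF_p)$, proved via the $\Z/p$-equivariant pair-of-pants product extended to the semipositive setting following Sugimoto. The second, and genuinely new, ingredient is a uniform bound on the boundary depth $\beta(\phi^p,\bF_p)\leq C$ with $C$ independent of $p$ (Theorem~\ref{thm: upper_bound}). Semisimplicity enters \emph{here}, not as a splitting of the Floer complex: the idempotents of $\qh_{ev}(M;\ol{Q(\Lambda_{\Z,univ})})$ are reduced mod $p$ with controlled valuation, yielding a $p$-independent bound on $\gamma_{E_p}(\psi)$ and hence on the boundary depth. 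Chaining $p\cdot\beta_{tot}(\phi,\bF_p)\leq K(\phi^p,\bF_p)\cdot\beta(\phi^p,\bF_p)\leq C\cdot K(\phi^p,\bF_p)$ gives linear growth of $K(\phi^p,\bF_p)$, hence of $N(\phi^p,\bF_p)$; but if no new periodic points appear, admissible-iteration stability of local Floer homology forces $N(\phi^p)$ to be constant, a contradiction. Your outline is missing the entire filtered/barcode layer, the bar-length Smith inequality, and the boundary-depth bound, which together constitute the actual proof.
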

In the non-degenerate setting, a natural interpretation of Conjecture \ref{conj: HZ_1} leads to the statement that any Hamiltonian diffeomorphism $\phi$ of a closed symplectic manifold $(M, \om)$ possessing more contractible fixed points than the total Betti number  of $M$ must have infinitely many periodic points. Shelukhin suggested the following interpretation of Conjecture \ref{conj: HZ_1}.

\begin{conj}\label{conj: HZ_2}
Let $(M,\om)$ be a closed symplectic manifold and $\bK$ a choice of ground field. If $\phi$ is a (possibly degenerate) Hamiltonian diffeomorphism with finitely many fixed points satisfying
	\begin{equation*}
	N(\phi,\bK)=\displaystyle\sum_{x\in\fix(\phi)}\dim_{\bK}\hf^{\loc}(\phi,x) > \dim_{\bK}\h_{*}(M;\bK),	
	\end{equation*}
then, it must have infinitely many periodic points. 
 \end{conj}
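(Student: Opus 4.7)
The plan is to prove the contrapositive in the setting of the paper (semipositive $M$ with semisimple $\qh(M;\bK)$): if $\phi$ has only finitely many periodic points, then $N(\phi,\bK)\leq \dim_{\bK}\h_{*}(M;\bK)$. The argument follows Shelukhin's strategy from the monotone case and proceeds in three broad steps: (i) an iteration-and-coefficient-reduction step that reduces the problem to studying the prime iterates $\phi^{p}$ over $\F_{p}$ for all sufficiently large primes $p$; (ii) a Smith-type inequality in $\Z/p$-equivariant Hamiltonian Floer theory, bounding $N(\phi,\F_{p})$ by a dimension in a localized equivariant quantum cohomology; (iii) an application of the semisimplicity of $\qh$ to identify the latter dimension with $\dim_{\F_{p}}\h_{*}(M;\F_{p})$.

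For step (i), finiteness of the periodic point set implies $\fix(\phi^{p})=\fix(\phi)$ for all primes $p>p_{0}$, so $N(\phi^{p},\cdot)$ at each point can be compared to $N(\phi,\cdot)$ via Sugimoto-type compatibility results for local Floer homology under iteration (together with the good/bad orbit analysis). A spreading-out argument reduces the problem from coefficients in $\bK$ to coefficients in $\F_{p}$ for a cofinal family of primes, preserving both semipositivity and the semisimplicity of quantum homology, the latter being an open condition on the characteristic.

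For step (ii), one constructs the $\Z/p$-equivariant Floer homology of $\phi^{p}$ from the cyclic symmetry on the $p$-fold pair-of-pants moduli, together with an equivariant PSS map from $\Z/p$-equivariant quantum cohomology. In the semipositive setting, the key technical input is the construction of quantum Steenrod operations and the corresponding equivariant PSS isomorphism; summing the local contributions at the common fixed points gives
\begin{equation*}
  N(\phi,\F_{p}) \;\leq\; \dim_{\F_{p}((u))}\bigl(\qh^{\Z/p}(M;\F_{p})\otimes_{\F_{p}[[u]]}\F_{p}((u))\bigr).
\end{equation*}
For step (iii), semisimplicity of $\qh(M;\F_{p})$ lets one split the right-hand side as a direct sum over its field factors, and on each factor a Frobenius-type / quantum Steenrod power identification (the central algebraic input in Shelukhin's original proof) collapses the $u$-localized equivariant dimension to the classical Betti count. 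Combining (i)--(iii) yields $N(\phi,\F_{p})\leq\dim_{\F_{p}}\h_{*}(M;\F_{p})=\dim_{\bK}\h_{*}(M;\bK)$ for all large $p$, contradicting the hypothesis.

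The main obstacle is the construction of the quantum Steenrod operations and the equivariant PSS isomorphism in the semipositive regime. In the monotone case, dimension counting in the equivariant moduli excludes sphere bubbling from all relevant configurations; in semipositive manifolds, sphere classes of Chern number zero (and of small positive Chern number) can appear in boundary strata of the equivariant moduli and must be handled in an equivariantly transverse way that respects the $\Z/p$-action. Concretely, I expect the proof to require a $\Z/p$-equivariant virtual perturbation or obstruction-bundle setup built on top of the semipositive Floer constructions of Hofer--Salamon, and a careful verification that the axiomatic properties needed for the Smith inequality (module structures over equivariant quantum cohomology, compatibility of PSS with the cyclic operation, and the multiplicative identities used in step (iii)) persist after perturbation. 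This is where the bulk of the paper's novelty over Shelukhin's argument should lie.
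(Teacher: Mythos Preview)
Your proposal has a genuine gap in step~(ii), and the overall architecture diverges substantially from what the paper actually does.

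First, the statement you are addressing is Conjecture~\ref{conj: HZ_2}; the paper does not prove it in general but proves the special case recorded as Theorem~\ref{main}. Your opening sentence acknowledges the restricted setting, so let us compare strategies for that theorem.

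The inequality you assert in step~(ii),
\[
N(\phi,\F_{p})\;\le\;\dim_{\F_{p}((u))}\bigl(\qh^{\Z/p}(M;\F_{p})\otimes_{\F_{p}[[u]]}\F_{p}((u))\bigr),
\]
is not a consequence of the equivariant Floer package, and no argument for it is given. The $u$-localized $\Z/p$-equivariant quantum homology has dimension equal to $\dim_{\F_{p}}\h_{*}(M;\F_{p})$ \emph{regardless} of semisimplicity, so if this inequality held you would have proved the conjecture for all semipositive manifolds, with no role for semisimplicity in step~(iii). That is a strong indication the inequality is false as stated. What Smith theory/localization actually produces in this context (Seidel, Shelukhin, Sugimoto) is an isomorphism between the $u$-localized equivariant Floer homology of $\phi^{p}$ and $\hf(\phi)\otimes\F_{p}((u))$; this compares \emph{homology ranks}, not the quantity $N(\phi,\F_{p})=\sum_{x}\dim\hf^{\loc}(\phi,x)$, which is a chain-level count and can be much larger than any homology dimension.

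The paper's proof is organized around the \emph{filtered} structure, which your proposal never invokes. The two ingredients are: (a)~the Smith-type inequality for total bar-lengths, $p\cdot\beta_{tot}(\phi,\F_{p})\le\beta_{tot}(\phi^{p},\F_{p})$ (Theorem~\ref{thm: smith}), proved via the equivariant pair-of-pants product (not quantum Steenrod operations or an equivariant PSS map); and (b)~a uniform bound $\beta(\phi^{p},\F_{p})\le C$ on the boundary depth, independent of $p$ (Theorem~\ref{thm: upper_bound}). Semisimplicity enters only in~(b): the idempotents of $\qh_{ev}$ over characteristic~$0$ are reduced mod~$p$ with controlled valuations (Sections~\ref{sec: reduction}--\ref{sec: reduction_idempotent}), yielding a $p$-independent bound on the spectral norm $\gamma_{E_p}$ and hence on $\beta$. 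Combining (a) and (b) with $\beta_{tot}\le K\cdot\beta$ forces $K(\phi^{p},\F_{p})$, and thus $N(\phi^{p},\F_{p})$ via $N=\dim\h_{*}(M)+2K$, to grow linearly in $p$; stability of local Floer homology under admissible iteration then produces new simple periodic points. None of these mechanisms --- barcodes, boundary depth, reduction of idempotents --- appear in your outline, and they are where the paper's actual work and novelty lie.
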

Here, and throughout the text $\fix(\phi)$ denotes the contractible fixed points of $\phi$. Furthermore, $\hf^{\loc}(\phi,x)$ denotes a local version of Floer homology for a contractible fixed point $x$ of $\phi$. When $x$ is a non-degenerate fixed point, we have $\hf^{\loc}(\phi,x)\cong\bK$, recovering the non-degenerate interpretation of Conjecture \ref{conj: HZ_1}. In essence, one can think of $N(\phi,\bK)$ as a weighted sum of the number of contractible fixed points of $\phi$. In a striking result \cite[Theorem A]{shelukhin2022hofer}, Shelukhin proved Conjecture \ref{conj: HZ_2} for spherically monotone symplectic manifolds with semisimple even quantum homology; a class of manifolds that includes complex projective spaces, complex Grassmannians, and their products. Furthermore, if $\bK$ has characteristic $0$, then there is a simple $p$-periodic point for each sufficiently large prime $p$. This article aims to generalize Shelukhin's theorem to the semipositive setting. We obtain the following result:

\begin{thm}\label{main}
Let $(M,\om)$ be a closed semipositive symplectic manifold with semisimple even quantum homology $\qh_{ev}(M;\Lambda_{\bK})$ for a ground field $\bK$. Then, any Hamiltonian diffeomorphism $\phi$ with finitely many contractible fixed points such that 
	\begin{equation*}
	N(\phi,\bK)=\displaystyle\sum_{x\in\fix(\phi)}\dim_{\bK}\hf^{\loc}(\phi,x)  > \dim_{\bK}\h_{*}(M;\bK)	
	\end{equation*}
must have infinitely many periodic points. If $\bK$ has characteristic zero, then $\phi$ has a simple\footnote{A fixed point of $\phi^k$ is said to be \textit{simple} if it is not a fixed point of $\phi^l$ for any proper divisor $l$ of $k$.} contractible $p$-periodic point for each sufficiently large prime $p$. 
\end{thm}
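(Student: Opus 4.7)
The plan is to adapt Shelukhin's proof \cite{shelukhin2022hofer} of the Hofer-Zehnder conjecture in the spherically monotone setting to the semipositive one. The central task is to extend the key Floer-theoretic and quantum-homological tools---the $\mathbb{Z}/p$-equivariant Floer homology, the equivariant pair-of-pants product, the Smith-type inequality, and the quantum Steenrod operations---to the semipositive case, in which sphere bubbling can occur in moduli spaces whose monotone counterparts are free of it.

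Arguing by contradiction, suppose $\phi$ has only finitely many contractible periodic points and fix a sufficiently large prime $p$. For such $p$, every isolated contractible fixed point of $\phi^{p}$ arises as the $p$-th iterate of a contractible fixed point of $\phi$; by the iteration invariance of local Floer homology of Ginzburg-Gurel, one obtains $N(\phi^{p}, \bF_p) \geq N(\phi, \bF_p)$ after base-changing $\bK$ to a finite field of characteristic $p$. The first main step is to establish a Smith-type inequality in the semipositive setting: one has $N(\phi^{p}, \bF_p) \leq \dim_{\bF_p} \hf_{\mathbb{Z}/p}(\phi^{p}; \bF_p)$ via a suitably localized $\mathbb{Z}/p$-equivariant Floer homology. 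Constructing this equivariant theory semipositively requires controlling sphere bubbling in moduli spaces of cylinders carrying a $\mathbb{Z}/p$-symmetry; the semipositivity assumption ensures that bubbles do not destabilize compactifications of moduli spaces of virtual dimension at most one, so that standard (non-virtual) transversality suffices, provided symmetry-compatible perturbations are chosen carefully.

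The second main step is an upper bound on $\dim_{\bF_p} \hf_{\mathbb{Z}/p}(\phi^{p}; \bF_p)$, for which I would deploy the quantum Steenrod operations on $\qh_{ev}(M; \Lambda_{\bF_p, univ})$. Semisimplicity of $\qh_{ev}(M; \Lambda_{\bK, univ})$ transfers to characteristic $p$ after base change, and entails that the quantum Steenrod $p$-power acts essentially as the Frobenius on the idempotent decomposition; combined with a PSS-type identification between Floer and quantum homology this yields $\dim_{\bF_p} \hf_{\mathbb{Z}/p}(\phi^{p}; \bF_p) \leq \dim_{\bF_p} H_{*}(M; \bF_p)$. Chaining the two inequalities produces $N(\phi, \bK) \leq \dim_{\bK} H_{*}(M; \bK)$, contradicting the hypothesis and forcing infinitely many contractible periodic points. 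For $\bK$ of characteristic zero, running the argument over each residue field shows that $\phi^{p}$ must possess a contractible fixed point not arising as an iterate of a fixed point of $\phi$, which by a direct counting argument yields a simple contractible $p$-periodic orbit for each sufficiently large prime $p$.

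The main obstacle will be the detailed construction of the quantum Steenrod operations and the Smith-type inequality for equivariant Floer homology in the semipositive setting. The relevant moduli spaces carry a $\mathbb{Z}/p$-symmetry, and obtaining transversality while preserving this symmetry without invoking virtual perturbations is delicate: one must separately analyze free and fixed loci of the action, and ensure that chain-level equivariant structures are compatible with the semipositive compactifications. The semipositivity hypothesis is precisely what is needed to keep the low-dimensional moduli spaces well-behaved, but verifying that the algebraic structures used in the argument---the idempotent decomposition from semisimplicity, the Frobenius-type behaviour of the Steenrod operations, and the pair-of-pants product compatibility---all survive the semipositive compactifications is where the bulk of the technical work lies.
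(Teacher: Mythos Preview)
Your proposed argument has a genuine conceptual gap. The chain of inequalities you aim for---$N(\phi,\bF_p)\le N(\phi^p,\bF_p)\le \dim_{\bF_p}\hf_{\Z/p}(\phi^p;\bF_p)\le \dim_{\bF_p}\h_*(M;\bF_p)$---cannot hold, because $N(\psi,\bK)=\dim_\bK\h_*(M;\bK)+2K(\psi,\bK)\ge\dim_\bK\h_*(M;\bK)$ for \emph{every} $\psi$ with isolated fixed points, so the final inequality is impossible whenever $K(\phi,\bK)>0$, which is exactly the hypothesis. More specifically, the $\Z/p$-equivariant Floer homology is a module over $\bF_p[[u]]\langle\theta\rangle$ (or its Tate localization), not a finite-dimensional $\bF_p$-space, so your middle inequality has no clear meaning; and quantum Steenrod operations do not produce a dimension bound of the type you describe. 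In short, a purely rank-theoretic Smith argument cannot see the difference between $N(\phi,\bK)$ and $\dim\h_*(M;\bK)$: both equivariant and ordinary Floer homology have the same rank over the appropriate Novikov field, namely $\dim\h_*(M)$.

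The paper's proof is structurally different and hinges entirely on the \emph{filtered} (persistence-module) structure, which your proposal never invokes. The two ingredients are: (i) a Smith-type inequality for \emph{total bar-lengths}, $p\cdot\beta_{\mathrm{tot}}(\phi,\bF_p)\le\beta_{\mathrm{tot}}(\phi^p,\bF_p)$, proved via the $\Z/p$-equivariant pair-of-pants product extended to the semipositive setting by Sugimoto; and (ii) a uniform bound on the \emph{boundary depth}, $\beta(\phi^k,\bF_p)\le C$ with $C$ independent of $k$ and of $p$, which is where semisimplicity enters---not through quantum Steenrod operations, but through spectral invariants of the idempotents and a Poincar\'e-duality argument that carefully tracks the idempotents under reduction mod $p$. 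Combining (i), (ii), and $\beta_{\mathrm{tot}}\le K\cdot\beta$ forces $K(\phi^p,\bF_p)$, hence $N(\phi^p,\bF_p)$, to grow at least linearly in $p$; Ginzburg--G\"urel iteration invariance then forces new simple $p$-periodic points. The nontrivial content lies in the action filtration, and the ``main technical achievement'' of the paper is establishing (ii) uniformly in $p$ in the absence of the monotone index--action relation.
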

\begin{remark}
The main difficulty that is circumvented to generalize Shelukhin's theorem to the non-monotone setting is to establish an upper bound of the boundary depth with $\bF_p$ coefficients (for a sufficiently large $p$) of a Hamiltonian diffeomorphism of a symplectic manifold satisfying the hypothesis of Theorem \ref{main}. Moreover, in order to obtain the existence of a contractible $p$-periodic point for each sufficiently large prime $p$ the bound obtained has to be independent of the prime $p$. In the monotone setting, Shelukhin exploits the relation between action and index of contractible periodic orbits to obtain such a bound. This is not possible in our setting. To overcome this issue, we work with quantum homology with coefficients in the algebraic closure of the universal Novikov field. This allow us to extract a good relation between the idempotents, before and after reducing mod p, and an upper bound for their valuation which is independent of the prime p provided it is sufficiently large. As detailed in Sections \ref{sec: semi} and \ref{sec: upper}, this is sufficient to obtain the desired bound. 
\end{remark}

\begin{remark}
There are known examples of closed semipositive symplectic manifolds that are not monotone and that have semisimple even quantum homology. Indeed, following \cite{ostrover2006calabi}, $(\rS^2\times\rS^2,\om\oplus\lambda\om)$, for $\lambda>1$, and the symplectic one point blow-up $(\C P^{2}\#\ol{\C P^{2}},\om_{\mu})$ of $\C P^{2}$, where $\om_{\mu}$ integrates to $\mu\in(0,1)\setminus\{1/3\}$ on the exceptional divisor and to $1$ on $[\C P^1]$, are non-monotone, semipositive, and have semisimple even quantum homology. Furthermore, any toric Fano manifold has semisimple even quantum homology for a generic toric symplectic form \cite{iritani2007convergence, fukaya2010lagrangian, ostrover2009quantum}, and if the even quantum homology of a toric Fano manifold $X$ is semisimple for the distinguished monotone symplectic form, then it is also semisimple for any toric symplectic form on $X$. If in addition $X$ is at most six-dimensional, it is also semipositive and, therefore, within the scope of Theorem \ref{main}. Recent work of Bai and Xu \cite{bai2023hofer} confirms Conjecture \ref{conj: HZ_2} for all compact toric symplectic manifolds. There are non-toric, non-monotone symplectic manifolds that satisfy the conditions of Theorem \ref{main}. For example, $(Q_3\times\C P^2, \om_{FS}\oplus \lambda\cdot\om_{FS})$, where $\lambda>0$ and $Q_3$ is the quadric in $\C P^{4}$. Indeed, it follows from the arguments in the proof of \cite[Proposition 1.8]{mcduff2011topology}, that if $Q_3\times\C P^2$ were symplectomorphic to a toric manifold, then it would have the toric structure of $\C P^3\times \C P^2$, which contradicts the fact that their Chern numbers are distinct. Moreover a $k$-point blow-up of size $\eps$ of $\C P^{2}$, is non-toric if $1/\eps$ is an integer and $k\geq 4$; see \cite{bayer2004semisimple, usher2011deformed, karshon2007circle}.
\end{remark}

The Hofer-Zehnder conjecture is related to a conjecture by Conley \cite{conley1983birkhoff, conley1984subharmonic}, which postulates that for a broad class of symplectic manifolds, any Hamiltonian diffeomorphism must have infinitely many periodic points. Therefore, the Hofer-Zehnder conjecture is automatically satisfied when the Conley conjecture holds. Following plenty of previous works that confirmed the conjecture in several cases \cite{salamon1992morse, conley1984subharmonic, conley1986symplectic, franks2003periodic, le2006periodic, hingston2009subharmonic, ginzburg2010conley, ginzburg2009action, hein2012conley, ginzburg2012conley}, Ginzburg and G\"{u}rel \cite{ginzburg2019conley} proved the most general statement known to hold. They showed that the existence of a Hamiltonian diffeomorphism with finitely many periodic points implies that there is a spherical homology class $A$ with $\brat{[\om],A}>0,\brat{c_{1}(M), A}>0$, in particular, the Conley conjecture holds for closed symplectically aspherical, negative-monotone and Calabi-Yau symplectic manifolds. However, the simple example of irrational rotation of the two-sphere shows that the Conley conjecture does not hold in general, and those are the cases where the Hofer-Zehnder conjecture is interesting. Furthermore, the Conley conjecture also fails for $\C\rP^{n}$ and complex Grassmannians, all of which have semisimple quantum homology \cite{entov2003calabi}.

Lastly, we point to other results in this direction. For instance, it was shown in \cite{ginzburg2014hyperbolic, ginzburg2018hamiltonian} that the presence of a hyperbolic fixed point prompts the existence of infinitely many periodic points. Furthermore, \cite{gurel2013non, ginzburg2016non, orita2017non, orita2019existence} showed that the existence of a non-contractible fixed point, which can also be considered ``unnecessary" from the Floer theoretic point of view, implies that there are infinitely many periodic points.

\subsection{Setup}
We recall some notions about Hamiltonian diffeomorphisms and symplectic topology required to follow an overview of the main result. 

\begin{definition}\label{def: semi-positive}
A closed symplectic manifold $(M,\om)$ is called \textit{semipositive} if for every sphere class $A$ in the image $\h^S_{2}(M;\Z)$ of the Hurewicz map $\pi_{2}(M)\ra\h_{2}(M;\Z)$ \[\brat{c_{1}(M),A}\geq 3-n,\quad \brat{[\om],A}>0 \quad\Rightarrow \quad\brat{c_{1}(M),A}\geq0.\] Where $c_{1}(M)\in\h^{2}(M;\Z)$ is the first Chern class\footnote{An \textit{almost complex structure} $J$ on $M$ is an automorphism of the tangent bundle $TM$ satisfying $J^{2}=-\id_{TM}$. It is called \textit{$\om$-compatible} if $\om(-,J-)$ is a Riemannian metric. The space $\cJ(M,\om)$ of $\om$-compatible almost complex structures is contractible, therefore, the first Chern class $c_{1}(M)=c_{1}(M,\om)$ of $(TM,J)$ is independent of $J\in\cJ(M,\om)$.} associated with the symplectic manifold.
\end{definition}
From this point forward, $(M,\om)$ denotes a closed semipositive symplectic manifold of dimension $2n$. Let $\phi=\phi_H$ be a Hamiltonian diffeomorphism generated by a Hamiltonian function\footnote{We usually consider normalized Hamiltonians $H$, i.e. $H(t,-)$ has zero $\om^n$ mean for all $t\in[0,1]$.} $H\in C^{\infty}(\R/\Z\times M,\R)$. We say that a fixed point $x$ of $\phi$ is contractible when the loop $\{\phi_{H}^{t}(x)\}_{t\in[0,1]}$ is contractible in $M$. It is a deep fact of symplectic topology that the class this loop represents in $\pi_{1}(M)$ is independent of the choice of $H$ generating $\phi$, see \cite[Remark 11.4.2]{McDuffSalamonIntro3}. We denote by $\fix(\phi)$ the collection of contractible fixed points. Observe that $\fix(\phi)$ is naturally included in $\fix(\phi^{k})$ for all $k\geq1$. For a fixed point $x$, we denote by $x^{(k)}$ its image under this inclusion. We call a fixed point $x$ of $\phi^k$ \textit{simple} if it is not a fixed point of $\phi^l$ for any proper divisor $l$ of $k$.

\begin{definition}
A fixed point $x$ of $\phi$ is called \textit{non-degenerate} if $1$ is not an eigenvalue of the linearized map $d\phi_{x}$. A Hamiltonian function $H$ is called non-degenerate if all contractible fixed points of $\phi_{H}$ are non-degenerate.
\end{definition}
To a Hamiltonian diffeomorphism $\phi\in\Ham(M,\om)$, one can associate a filtered Floer homology theory \cite{Floer1, Floer2, Floer3} with coefficients over a base field $\bK$ whose information, in the case where $\phi$ has finitely many fixed points, is partially captured by a finite set of strictly positive real numbers\footnote{The strict positivity follows from the fact that Hamiltonian Floer complex is strict in the sense of \cite{usher2016persistent}. } \[\beta_1(\phi,\bK)\leq\dots\leq\beta_{K(\phi,\bK)}(\phi,\bK),\] depending only on $\phi$ and $\bK$, called the \textit{bar-length spectrum of $\phi$}. Here, $K(\phi,\bK)$ denotes the number of bars in the bar-length spectrum of $\phi$. We describe these notions, which were first introduced in symplectic topology by Polterovich and Shelukhin \cite{polterovich2016autonomous} (see also \cite{polterovich2017persistence, usher2016persistent}) in Section \ref{sec: persistence}. The length $\beta(\phi,\bK)=\beta_{K(\phi,\bK)}(\phi,\bK)$ of the largest bar is called the \textit{boundary depth} and was introduced by Usher \cite{usher2011boundary, usher2013hofer}. By definition, the boundary depth $\beta(\phi,\bK)$ is zero when $K(\phi,\bK)$ is zero. We denote by \[\beta_{tot}(\phi,\bK)=\beta_1(\phi,\bK)+\dots+\beta_{K(\phi,\bK)}(\phi,\bK)\] the \textit{total bar-length}. Finally, Shelukhin \cite{shelukhin2022hofer} showed that
\begin{equation}\label{eq: nbk}
	N(\phi,\bK) = \dim_{\bK}\h_{*}(M;\bK) + 2K(\phi,\bK),
\end{equation}
where $N(\phi,\bK)$ is as in the statement of Theorem \ref{main}. This equality also holds in the semipositive setting, we summarize the details of the underlying theorem in Section \ref{sec: LFH}. In particular, the condition $N(\phi,\bK)>\dim_{\bK}\h_{*}(M;\bK)$ in the statement of the theorem implies that $\beta(\phi,\bK)$ is positive.
\begin{remark}\label{rmk: bar_field_ext}
We note that for sufficiently large primes $p$ we have by \cite[Lemma 16]{shelukhin2022hofer} the following equalities $N(\phi,\Q)=N(\phi,\bF_p), \dim_{\Q}\h_{*}(M;\Q)=\dim_{\bF_p}\h(M;\bF_p)$, and $\beta(\phi,\Q)=\beta(\phi,\bF_p)$ for any Hamiltonian diffeomorphism $\phi$. We note that how large the prime needs to be taken depends on the $\phi$. If $\bK$ has characteristic $0$, it is a field extension of $\Q$, which by \cite[Section 4.4.4]{shelukhin2022hofer} implies that $N(\phi,\Q)=N(\phi,\bK), \dim_{\Q}\h_{*}(M;\Q)=\dim_{\bK}(M;\bK)$, and $\beta(\phi,\Q)=\beta(\phi,\bK)$.
\end{remark}

\subsection{Overview}\label{sec: overview}
We summarize the proof of Theorem \ref{main} while pointing to the technical developments required to obtain the result in the semipositive setting. The full details of the proof are in Section \ref{sec:proof_main}.

There are two main components to the proof. One is a generalization to the semipositive setting, stated as Theorem \ref{thm: smith}, of the Smith type inequality
\begin{equation}\label{eq: smith_type}
	p\cdot\beta_{tot}(\phi,\bF_p)\leq\beta_{tot}(\phi^{p},\bF_p),
\end{equation}
shown to hold by Shelukhin for spherically monotone symplectic manifolds under the assumption that $\phi^p$ has finitely many fixed points. The main technical difficulty can be overcome using the recent work of Sugimoto \cite{sugimoto2021hofer}, generalizing the $\Z/(p)$-equivariant product-isomorphism to the semipositive setting. The Smith inequality is then obtained by following Shelukhin's proof \cite{shelukhin2022hofer} in the spherically monotone setting. The equivariant Floer theory required to obtain inequality (\ref{eq: smith_type}) is detailed in Section \ref{sec: smith}.

The other component of the proof, proven as Theorem \ref{thm:upper_bound}, is the main novelty of this paper. We uniformly bound the boundary-depth 
\begin{equation}\label{eq: depth}
	\beta(\phi^k,\bF_p)\leq C
\end{equation}
for sufficiently large iterations of $\phi$. When $(M,\om)$ is monotone and the even part of quantum homology is semisimple, Shelukhin \cite{shelukhin2022hofer} uses the relationship between indices and actions of contractible fixed points to obtain an upper bound for the boundary-depth that only depends on the dimension of the manifold. In the semipositive case, there is no such uniform relation between the actions and indices.
 Therefore, we work with quantum homology with coefficient field the algebraic closure of the universal Novikov field to have a good relation, before and after reducing coefficients, between the idempotents generating the even quantum homology of $M$. We are then able to bound the boundary-depth by a constant, independent of $p$, depending on the idempotents of the even quantum homology whose coefficient has characteristic $0$.

With the Smith-type inequality and the uniform bound on the boundary-depth established, we now summarize Shelukhin's argument to prove Theorem \ref{main}. First, we consider the case where $\bK$ has characteristic $0$ and the Hamiltonian diffeomorphism $\phi$ and all of its iterates are non-degenerate. By inequality (\ref{eq: smith_type}) and the simple observation that $\beta_{tot}(\psi,\bK)\leq K(\psi,\bK)\cdot\beta(\psi,\bK)$ for any Hamiltonian diffeomorphism $\psi$ and base field $\bK$, we have
\begin{equation*}\label{eq: main}
    p\cdot\beta_{\mathrm{tot}}(\phi, \bF_p)\le\beta_{\mathrm{tot}}(\phi^p, \bF_p)\le K(\phi^p, \bF_p)\cdot\beta(\phi^p, \bF_p).
\end{equation*}
The assumption that $N(\phi, \bK)>\dim_{\bK}H_*(M; \bK)$ and Remark \ref{rmk: bar_field_ext} imply that the total bar-length $\beta_{\mathrm{tot}}(\phi, \bF_p)$ is positive for a sufficiently large prime $p$. Furthermore, inequality (\ref{eq: depth}) yields
\begin{equation*}
    p\cdot\beta_{\mathrm{tot}}(\phi, \bF_p)\le C\cdot K(\phi^p, \bF_p),
\end{equation*}
which means that $K(\phi^p,\bK)$ grows at least linearly with respect to $p$. We now observe that in the non-degenerate setting Equation (\ref{eq: nbk}) yields
\begin{equation*}
	\fix(\phi^p) = \dim_{\bK}\h_{*}(M;\bK) + 2K(\phi^p,\bK),
\end{equation*}
which implies that $\phi$ must have infinitely many contractible periodic points. In general, when the Hamiltonian diffeomorphism $\phi$ is possibly degenerate, we can again achieve inequality (\ref{eq: main}) by the local equivariant Floer homology argument \cite[Section 7.4]{shelukhin2022hofer}. Furthermore, by the canonical complex whose properties are listed in Theorem \ref{thm: perturbation}, the upper bound for the boundary-depth, which is also independent of $p$, continues to hold. Therefore, we can use the same argument as in the non-degenerate case to obtain the linear growth of $K(\phi^p,\bK)$ and, thus, of $N(\phi^p,\bK)$. To conclude the argument, we assume that $p$ is large enough to guarantee $\phi^p$ is an admissible iteration in the sense of Definition \ref{def: admissible}, it then follows by Theorem \ref{A}, \cite[Theorem 1.1]{ginzburg2010local},\cite[Theorem C]{shelukhin2022hofer}, that $\hf^{\loc}(\phi^{p_1},x)\cong\hf^{\loc}(\phi^{p_2},x)$ for all $x\in\fix(\phi^{p_1})$ for any two primes $p_2\geq p_1\geq p$. In particular, there must be a new simple $p'$-periodic point for each prime $p'>p$. In fact, if $\fix(\phi^{p_1})=\fix(\phi^{p_2})$ for $p_2\geq p_1\geq p$, then $N(\phi^{p_1},\bK)=N(\phi^{p_2},\bK)$ contradicting the linear growth of $N(\phi^{p'},\bK)$ for $p'\geq p$. A similar argument works when $\bK$ has characteristic $p$ the details of which can be found in Section \ref{sec:proof_main}.

\section*{Acknowledgements}
We thank Egor Shelukhin and Michael Usher for bringing us together to work on this project, for their support, and for the numerous helpful discussions. We thank Guangbo Xu for pointing out a problem in Proposition \ref{prop:interleaving}, which has since been addressed. We also thank the anonymous reviewer for the helpful comments and suggestions. H.L. thanks Shengzhen Ning for the example of the four-point blowup of $\C P^2$ and for pointing out the work \cite{li2022enumerative} which was helpful to understand toric structures under blowups. This work is part of both authors' Ph.D. theses. H.L.'s Ph.D is taking place at the University of Georgia under the supervision of Michael Usher and M.S.A's Ph.D is being carried out at the Université de Montréal under the supervision of Egor Shelukhin. M.S.A  was partially supported by Fondation Courtois, by the ISM’s excellence scholarship, and by the J.Armand Bombardier’s excellence scholarhip. This material is based upon work supported by the National Science Foundation under Grant No. DMS-1928930, while M.S.A was in residence at the Simons Laufer Mathematical Sciences Institute (previously known as MSRI) Berkeley, California during the Fall 2022 semester.
\section{Preliminaries}
\subsection{Basic setup}
\subsubsection{Fixed points of Hamiltonian diffeomorphisms}
Let $(M,\om)$ be a closed symplectic manifold of dimension $2n$. Denote by $\cH$ the space of $1$-periodic Hamiltonian functions $H\in C^{\infty}(\R/\Z\times M,\R)$ normalized so that $H(t,-)$ has zero $\om^n$ mean for all $t\in[0,1]$. We denote by $X_{H}$ the time-dependent vector-field induced by Hamilton's equation \[\iota_{X_{H}^{t}}\om=-dH_{t},\] and by $\{\phi_{H}^{t}\}_{t\in[0,1]}$ the induced symplectic isotopy i.e. $\phi_{H}^{t}$ satisfies \[\frac{d\phi^{t}_{H}}{dt}=X_{H}^{t}\circ\phi_{H}^{t}\] with initial condition $\phi_{H}^{0}=\id$. We denote by $\phi_{H}=\phi_{H}^{1}$ its time-one map. Diffeomorphisms obtained in this manner are called \textit{Hamiltonian diffeomorphisms}. There is a bijective correspondence between the contractible $1$-periodic orbits of $X_{H}^{t}$ and the contractible fixed points $\fix(\phi_{H})$ of $\phi_{H}$, therefore, periodic points of $\phi_{H}$ correspond to \[\mrm{Per}(\phi_{H})=\bigcup_{k\geq1}\fix(\phi_{H}^{k}).\] 

For $F,G\in\cH$, we denote by $F\#G$ the Hamiltonian function \[F\#G(t,x)=F(t,x) + G(t,(\phi_{F}^{t})^{-1}(x))\] that induces the isotopy $\{\phi_{F}^{t}\phi_{G}^{t}\}$, in particular $H\#\cdots\#H$ ($k$-times) generates $\{(\phi_{H}^{t})^k\}$. Note that $H^{(k)}(t,x)=kH(kt,x)$ generates a homotopic path (rel. ends), therefore $\phi_{H^{(k)}}=\phi_{H}^{k}$. We denote by $\ol{H}$ the Hamiltonian function $\ol{H}(t,x)=-H(t,\phi_{H}^{t}(x))$, which generates $\{(\phi_{H}^{t})^{-1}\}$.

\subsubsection{The Hamiltonian action funcitonal}
Let $\cL M$ denote the component of contractible loops of the free loop-space of $M$. For a pair $(x,\ol{x})$ consisting of a loop $x\in\cL M$ and a smooth \textit{capping} $\ol{x}:\D\ra M$, with $\ol{x}|_{\del \D}=x$, consider the following equivalence relation: $(x,\ol{x})\sim(y,\ol{y})$ if and only if,
\begin{align*}
	x=y,\quad\text{and}\quad [\ol{x}\#(-\ol{y})]\in\ker[\om]\cap\ker c_{1}.
\end{align*}
Here, $\ol{x}\#(-\ol{y})$ stands for gluing the disks along their boundaries with the orientation of $\ol{y}$ reversed, and $[\ol{x}\#(-\ol{y})]$ its class in $\h^{S}_{2}(M;\Z)$. Denote by $\til\cL M$ the covering space of $\cL M$ given by the collection of such pairs modulo the equivalence relation $\sim$. For the sake of brevity, we shall write $\ol{x}$ instead of $(x,\ol{x})$. Note that, the group of deck transformations of $\til\cL M$ is isomorphic to \[G_{\om}=\pi_{2}(M)/\ker[\om]\cap\ker c_{1},\] where the transformation associated with $A\in\pi_{2}(M)$ is given by sending $\ol{x}$ to $\ol{x}\#A$. To a Hamiltonian function $H$, we associate an action functional $\cA_{H}:\til\cL M:\ra\R$ defined by
\begin{equation*}
	\cA_{H}(\ol{x})=\int^{1}_{0}H(t,x(t))dt - \int_{\ol{x}}\om.
\end{equation*}
The critical points of $\cA_{H}$ are the lifts $\til\cP(H)$ of the contractible $1$-periodic orbits $\cP(H)$ satisfying the equation $x'(t)=X_{H}^{t}(x(t))$. The \textit{action spectrum} of $H$ is defined as the subset of $\R$ given by the critical values $\Spec(H)=\cA_{H}(\til\cP(H))$ of the action functional. If $A\in G_{\om}$ then, \[\cA_{H}(\ol{x}\#A)=\cA_{H}(\ol{x})-\brat{[\om],A}.\] Furthermore, the action functional behaves well with iterations in the sense that \[\cA_{H^{(k)}}(\ol{x}^{(k)})=k\cA_{H}(\ol{x}),\] where $\ol{x}^{(k)}$ inherits the natural capping induced by $\ol{x}$.

\subsection{Filtered Hamiltonian Floer homology}\label{sec: Filtered_FH}
Floer theory was first developed by A. Floer in \cite{Floer1, Floer2, Floer3} as a generalization of Morse-Novikov homology, to prove the non-degenerate Arnold conjecture. We refer to \cite{Hofer-Salamon, McDuffSalamonIntro3, OhBook} and references therein for the details of the construction and to \cite{AbouzaidBook, SeidelMCG, Zap:Orient} for in-depth discussions of canonical orientations. We shall consider the construction of Hamiltonian Floer homology in the {semipositive} setting.

Let $(M,\om)$ be a closed symplectic manifold and $\bK$ a choice of base field. Let $H$ be a non-degenerate Hamiltonian function on $M$ and $J=\{J_{t}\}$ be a $1$-periodic family of $\om$-compatible almost complex structures. For $a\in\R\setminus\spec(H)$ the \textit{Floer chain complex at filtration level $a$} is defined by
\begin{equation*}
	\cf_{*}(H;J)^{<a}=\Big\{\sum a_{i}\ol{x}_{i}\,\Big|\,a_{i}\in\bK,\,\ol{x}_{i}\in\til\cP(H),\, \cA_{H}(\ol{x}_{i})<a\Big\},
\end{equation*}
where every summation satisfies the condition that the set $\{i\,|\,a_{i}\neq0, \cA_{H}(\ol{x}_{i})>c\}$ is finite for all $c\in\R$. The complex is graded by the Conley-Zehnder index, which assigns an integer $\cz(\ol{x})$ to each $\ol{x}\in\til\cP(H)$ of $\cA_{H}$ and satisfies $\cz(\ol{x}\#A)=\cz(\ol{x})-\brat{c_{1}(M),A}$. Note that $\cf_{*}(H;J)=\cf_{*}(H;J)^{+\infty}$ is naturally a finitely generated module over the Novikov ring
\begin{equation*}
	\Lambda_{\om,\bK} = \Bigg\{\sum_{A\in G_{\om}} a_{A}T^{\om(A)}\,\Big|\, a_{A}\in\bK,\,\#\{A\,|\,a_{A}\neq0, \om(A)<c\}<\infty,\,\forall c\in\R\Bigg\} .
\end{equation*}
A \textit{Floer trajectory} between capped orbits $\ol{x}_{-},\ol{x}_{+}$, is a smooth map $u:\R\times\rS^{1}\ra M$ satisfying the \textit{Floer equation}
\begin{equation*}
	\frac{\del u}{\del s} + J_{t}(u)\Bigg(\frac{\del u}{\del t} - X_{H}^{t}(u)\Bigg) = 0,
\end{equation*}
with asymptotics, \[\lim_{s\ra\pm\infty}u(s,t)=x_{\pm}(t),\] such that the capping $u\#\ol{x}_{+}$ is equivalent to $\ol{x}_{-}$, and $\cz(\ol{x}_{-})-\cz(\ol{x}_{+})=1$. In the semipositive setting, the compactified moduli space $\cM(\ol{x},\ol{y};J)$ of Floer-trajectories from $\ol{x}$ to $\ol{y}$ (modulo the natural $\R$-action) is a manifold of dimension $\cz(\ol{x})-\cz(\ol{y})-1$. The \textit{Floer differential} \[d_{F}:\cf_{k}(H;J)^{<a}\ra\cf_{k-1}(H;J)^{<a}\] is defined by 
\begin{equation*}
	d_{F}(\ol{x})=\sum_{\cz(\ol{x})-\cz(\ol{y})=1}\big(\#\cM(\ol{x},\ol{y};J)\big)\cdot\ol{y}.
\end{equation*}
It squares to zero and preserves the filtration induced by $\cA_{H}$. For an interval $I=(a,b)$, $a<b$, where $a,b\in\R\setminus\spec(H)$ we define the \textit{Floer complex in the action window $I$} as the quotient complex
\begin{equation*}
	\cf_{*}(H;J)^{I}=\cf_{*}(H;J)^{<b}/\cf_{*}(H;J)^{<a}.
\end{equation*}
We denote by $\hf_{*}(H)^{I}$ the resulting homology of this complex with the differential induced by $d_{F}$ and call it the \textit{Floer homology of $H$ in the action window $I$}. Note that it is independent of the generic choice of almost complex structure $J$. The (total) Floer homology $\hf_{*}(H)$ of $H$ is obtained by setting $a=-\infty$ and $b=+\infty$ and does not depend on the choice of Hamiltonian (by a standard continuation argument). Furthermore, for all $H\in\cH$, $\hf_{*}(H)^{I}$ depends only on the homotopy class of $\{\phi_{H}^{t}\}_{t\in[0,1]}$ in the universal cover $\widetilde\Ham(M,\om)$ of the group of Hamiltonian diffeomorphisms $\Ham(M,\om)$. For details; refer to \cite[Section 12.4]{mcduff2012j}.

When $H$ is degenerate, we have to consider perturbation data $\cD=(K^{H}, J^{H})$, where $K\in\cH$ is such that $H^{\cD}=H\#K^{H}$ is a non-degenerate Hamiltonian and $J^H$ is a choice of generic almost complex structure with respect to $H^{\cD}$. We take the action functional to be $\cA_{H;\cD}=\cA_{H^{\cD}}$. When $(M,\om)$ is rational, an admissible action window $I=(a,b)$ for $H$, i.e. $a,b\in\R\setminus\Spec(H)$, will remain so for $H^{\cD}$ for sufficiently $C^{2}$-small $K^H$, furthermore, the groups $\hf(H;\cD)^{I}$ are canonically isomorphic. Therefore, $\hf(H)^{I}$ is defined as the colimit of the induced directed system. The general case is dealt with by taking the colimit over partially ordered non-degenerate perturbations  whose action spectrums do not include $a$ or $b$;  refer to \cite{hein2012conley} or \cite[Section 2.2.2]{atallah2020hamiltonian} for a detailed exposition.

\subsection{Novikov ring and non-Archimedean valuation}
\subsubsection{Novikov ring and extending coefficients}
Let $R$ be a commutative unital ring. The \textit{universal Novikov ring over $R$} is defined as
\begin{equation}\label{eq: univ_nov_ring}
	\Lambda_{R} = \bigg\{\sum_{i=-K}^\infty a_iT^{\lambda_i}\,|\,a_i\in R,\,\lambda_i\nearrow+\infty\bigg\}.
\end{equation}
It follows from \cite[Theorem 4.2]{Hofer-Salamon} that $\Lambda_{{R}}$ is a principal ideal domain (resp. a field) whenever $R$ is a principal ideal domain (resp. a field). In particular, when $R$ is a principal ideal domain, every nonzero prime ideal in $\Lambda_{{R}}$ is maximal. We are particularly interested in the case $R=\Z$, when $R$ is not a field, and the case $R=\bF_{p}, \Q$, for $p$ prime, when $R$ is a field.

 For a base field $\bK$, denote by $\cf(H;\Lambda_{\bK,\om})$ the Floer chain complex defined in Section \ref{sec: Filtered_FH}, where we omit the choice of generic almost complex structure $J$. It will often be convenient to extend the coefficients to the universal Novikov field $\Lambda_{\bK}$ and to its algebraic closure $\ol\Lambda_{\bK}$. We define
\begin{equation*}
	\cf(H; \Lambda_{\bK}) = \cf(H; \Lambda_{\bK, \omega})\otimes_{\Lambda_{\bK, \omega}}\Lambda_{\bK}
\end{equation*}
and
\begin{equation*}
	\cf(H; \ol\Lambda_{\bK}) = \cf(H; \Lambda_{\bK})\otimes_{\Lambda_{\bK}}\ol\Lambda_{\bK},
\end{equation*}
the differentials are extended by linearity. In particular, if $\{x_1,\cdots,x_B\}$ is a $\Lambda_{\bK,\om}$-basis of $\cf(H;\Lambda_{\bK,\om})$, then the elements of $\cf(H; \Lambda_{\bK})$ (resp. $\cf(H; \ol\Lambda_{\bK})$) are of the form $\sum \lambda_ix_i$ where $\lambda_i\in\Lambda_{\bK}$ (resp. $\ol\Lambda_{\bK}$).

\subsubsection{Non-Archimedean valuation}\label{sec: non-Arch_val}
\begin{definition}\label{def: valuation}
A \textit{non-Archimedean valuation} on a field $\Lambda$ is a function \[\nu:\Lambda\ra\R\cup\{+\infty\}\] satisfying the following properties:
	\begin{enumerate}
	\item $\nu(x)=+\infty$ if and only if $x=0$,
	\item $\nu(xy)=\nu(x)+\nu(y)$ for all $x,y\in\Lambda$,
	\item $\nu(x+y)\geq \min\{\nu(x),\nu(y)\}$ for all $x,y\in\Lambda$. 
	\end{enumerate}
Furthermore, we set $\Lambda^0=\nu^{-1}([0,+\infty))$ to be the subring of elements of nonnegative valuation.
\end{definition}

It shall often be the case that $\Lambda = \Lambda_{\bK}$, where $\Lambda_{\bK}$ is the \textit{universal Novikov field over a ground field $\bK$},
\begin{equation}\label{eq: univ_nov}
	\Lambda_{\bK} = \bigg\{\sum_{i=-K}^\infty a_iT^{\lambda_i}\,|\,a_i\in\bK,\,\lambda_i\nearrow+\infty\bigg\}.
\end{equation}
In this case, $\Lambda$ can be endowed with a non-Archimedean valuation given by setting $\nu(0)=+\infty$ and
\begin{equation}\label{eq: valuation_1}
	\nu\bigg(\sum_{i=-K}^\infty a_iT^{\lambda_i}\bigg)=\lambda_{-K}
\end{equation}
on $\Lambda\setminus\{0\}$. The universal Novikov ring over $\Z$, denoted $\Lambda_{\Z}$, is defined just as in Equation (\ref{eq: univ_nov}), however, it is not a field. Its field of fractions $Q(\Lambda_{\Z})$ can be identified with a subfield of $\Lambda_{\Q}$, therefore, its elements can be as expressed as \[\sum_{i=-K}^\infty c_iT^{\lambda_i},\quad c_i\in\Q.\] We can hence define a valuation $\nu:Q(\Lambda_{\Z})\ra\R\cup\{+\infty\}$ as in Equation (\ref{eq: valuation_1}). 

\subsubsection{Field norms and extension of valuations}
The notion of valuation on a field $\Lambda$ is closely related to that of a field norm defined below.
\begin{definition}
A \textit{non-Archimedean norm on a field $\Lambda$} is a map $|\cdot|:\Lambda\ra\R_{\geq0}$ satisfying the following properties:
	\begin{enumerate}
	\item $|x|=0$ if and only if $x=0$,
	\item $|xy|=|x||y|$ for all $x,y\in\Lambda$,
	\item $|x+y|\leq\max\{|x|,|y|\}$ for all $x,y\in\Lambda$.
	\end{enumerate}
Furthermore, $\Lambda$ is said to be \textit{complete} with respect to $|\cdot|$ if it is a complete metric space with respect to the induced topology. 
\end{definition}
 Note that given a non-Archimedean valuation $\nu$, one can define a non-Archimedean norm by setting $|x|=e^{-\nu(x)}$. Conversely, if $|\cdot|$ is a non-Archimedean norm, a non-Archimedean valuation can be obtained by setting $\nu(x)=-\ln(|x|)$. The following results in \cite{cassels1986local} allow one to extend a given valuation to certain field extensions.

 \begin{prop}[Chapter 7, Theorem 1.1 in \cite{cassels1986local}]\label{prop: valuation_extension1}
Let $\bK$ be a field that is complete with respect to a norm $|\cdot|$ and let $\bL$ be a finite extension of degree $n$. Then, there is precisely one extension $\parallel\cdot\parallel$ of $|\cdot|$ to $\bL$. It is given by
\begin{equation*}
    \parallel A\parallel=|N_{\bL/\bK}(A)|^{1/n}
\end{equation*}
where $A\in\bL$ and $N_{\bL/\bK}(A)$ is the determinant of the map $B\mapsto AB$ for $B\in\bL$. Furthermore, $\bL$ is complete with respect to $\parallel\cdot\parallel$.
\end{prop}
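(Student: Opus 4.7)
The plan is to decompose the proof into three parts: existence of the formula as a norm, uniqueness of any extension, and completeness of $\bL$ in this norm.

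For existence, I would first set $\|A\| := |N_{\bL/\bK}(A)|^{1/n}$ and immediately verify the easy axioms: strict positivity follows from non-degeneracy of $N_{\bL/\bK}$, multiplicativity is inherited from the multiplicativity of the field norm, and the extension property follows from $N_{\bL/\bK}(a) = a^n$ for $a \in \bK$. The nontrivial content is the non-Archimedean triangle inequality, which by multiplicativity I would reduce to showing that $\cO_\bL := \{A \in \bL \mid \|A\| \leq 1\}$ is closed under addition. The key structural claim is that $\cO_\bL$ coincides with the integral closure of the valuation ring $\cO_\bK := \{x \in \bK \mid |x| \leq 1\}$ in $\bL$. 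In one direction, if $A$ has minimal polynomial $f(X) = X^m + c_{m-1}X^{m-1} + \cdots + c_0 \in \cO_\bK[X]$, then the characteristic polynomial of multiplication-by-$A$ on $\bL$ is $f(X)^{n/m}$, giving $|N_{\bL/\bK}(A)| = |c_0|^{n/m} \leq 1$. For the converse, Hensel's lemma over the complete base $\bK$ forces the coefficients of the minimal polynomial of any $A \in \cO_\bL$ into $\cO_\bK$: if some coefficient had norm larger than one, rescaling by the coefficient of maximal norm would produce an irreducible polynomial with leading coefficient of norm strictly less than one and constant term of norm one, contradicting Hensel's factorization criterion. Since the integral closure is a ring, $\cO_\bL$ is closed under addition.

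For uniqueness, suppose $\|\cdot\|_1$ and $\|\cdot\|_2$ are two extensions. Both endow $\bL$ with the structure of a finite-dimensional normed $\bK$-vector space, and since $\bK$ is complete, any two such norms are topologically equivalent to the coordinate sup-norm, so in particular $c_1 \|\cdot\|_1 \leq \|\cdot\|_2 \leq c_2 \|\cdot\|_1$ for some constants. If $\|A\|_1 \neq \|A\|_2$ for some $A \in \bL^\times$, then without loss of generality $\|A\|_1 < \|A\|_2$, and by multiplicativity $\|A^k\|_2 / \|A^k\|_1 = (\|A\|_2/\|A\|_1)^k \to \infty$, contradicting the upper bound. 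Hence the two norms agree. Completeness of $(\bL, \|\cdot\|)$ follows from the same principle: relative to any $\bK$-basis, $\|\cdot\|$ is equivalent to the coordinate sup-norm, so a Cauchy sequence in $\bL$ is componentwise Cauchy in the complete field $\bK$ and converges coordinate-wise.

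The main obstacle is the non-Archimedean triangle inequality, specifically the identification of $\cO_\bL$ with the integral closure of $\cO_\bK$; this is where Hensel's lemma does the real work and where the completeness of $\bK$ is used essentially. The uniqueness and completeness conclusions are then soft consequences of the general fact that all norms on a finite-dimensional vector space over a complete field are equivalent.
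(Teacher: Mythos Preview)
The paper does not prove this proposition at all: it is quoted verbatim as a black-box result from Cassels' \emph{Local Fields} (Chapter 7, Theorem 1.1) and used only as input to Remark~\ref{rmk: valuation_extension}. So there is no ``paper's own proof'' to compare against.

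Your sketch is the standard argument in the non-Archimedean case and is correct as far as it goes. One point worth flagging: the proposition as stated (and as it appears in Cassels) covers arbitrary complete valued fields, including the Archimedean case, whereas your argument via Hensel's lemma and the integral closure of $\cO_\bK$ is specific to the non-Archimedean setting. In the Archimedean case the result is instead a consequence of Ostrowski's theorem, which forces $\bK$ to be $\R$ or $\C$. Since the paper only ever applies the proposition to non-Archimedean norms (on $\Lambda_{\bF_p,univ}$ and its extensions), your restriction is harmless for the intended use, but you should either note it explicitly or add a sentence disposing of the Archimedean case.
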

 
 \begin{prop}[Chapter 9, Lemma 2.1 in \cite{cassels1986local}]\label{prop: valuation_extension2}
Let $\bL=\bK(A)$ be a separable extension and let $F(x)\in\bK[x]$ be the minimal polynomial for $A$. Let $\mathfrak{K}$ be the completion of $\bK$ with respect to a norm $|\cdot|$. Let $F(x)=\phi_1(x)\cdots\phi_J(x)$ be the decomposition of $F(x)$ into irreducibles in $\mathfrak{K}[x]$. Then the $\phi_j$ are distinct. Let $\bL_j=\mathfrak{K}(B_j)$ where $B_j$ is a root of $\phi_j(x)$. Then there is an injection
\begin{equation}\label{BB}
\bL\hookrightarrow\bL_j
\end{equation}
extending $\bK\hookrightarrow\mathfrak{K}$ under which $A\mapsto B_j$. Denote by $|\cdot|_j$ the norm on $\bL$ induced by equation \ref{BB} and the unique norm on $\bL_j$ extending $|\cdot|$. Then the $|\cdot|_j$ $(1\le j\le J)$ are precisely all the extensions of $|\cdot|$ from $\bK$ to $\bL$. Furthermore, $\bL_j$ is the completion of $\bL$ with respect to $|\cdot|_j$.
\end{prop}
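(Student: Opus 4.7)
The plan is to reduce the statement to Proposition 2.7 (which handles finite extensions of complete fields) via a careful algebraic identification of the completion of $\bL$ with one of the $\bL_j$. I would proceed in four stages.

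\textbf{Stage 1 (distinctness of the $\phi_j$).} Since $\bL/\bK$ is separable, the minimal polynomial $F$ of $A$ over $\bK$ is separable, hence squarefree in any extension field. Therefore, $F$ remains squarefree in $\mathfrak{K}[x]$, forcing the irreducible factors $\phi_j$ to be pairwise coprime and distinct.

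\textbf{Stage 2 (construction of the embeddings).} Since $\phi_j \mid F$ in $\mathfrak{K}[x]$ and $\phi_j(B_j)=0$, we have $F(B_j)=0$ in $\bL_j$. The $\bK$-algebra map $\bK[x]\to \bL_j$, $x\mapsto B_j$, factors through $\bK[x]/(F)\cong \bL$, producing a $\bK$-algebra homomorphism $\bL \hookrightarrow \bL_j$ with $A\mapsto B_j$. Injectivity is automatic since $\bL$ is a field.

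\textbf{Stage 3 ($|\cdot|_j$ and the completion property).} Proposition 2.6 applied to the finite extension $\bL_j/\mathfrak{K}$ of the complete field $\mathfrak{K}$ produces a unique norm on $\bL_j$ extending $|\cdot|$, and $\bL_j$ is complete with respect to it. Pulling this norm back through the embedding of Stage 2 defines $|\cdot|_j$ on $\bL$, which extends $|\cdot|$ on $\bK$. For the completion claim, the closure of the image of $\bL$ inside $\bL_j$ is a closed subfield that contains both the closure of $\bK$ (which is $\mathfrak{K}$) and $B_j$; hence it contains $\mathfrak{K}(B_j)=\bL_j$, so equality holds. Thus $\bL_j$ is the completion of $\bL$ with respect to $|\cdot|_j$.

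\textbf{Stage 4 (exhaustiveness).} Given any extension $\parallel\cdot\parallel$ of $|\cdot|$ from $\bK$ to $\bL$, form the completion $\widehat{\bL}$. The closure of $\bK$ inside $\widehat{\bL}$ is canonically identified with $\mathfrak{K}$. The element $A\in \bL\subset \widehat{\bL}$ is algebraic over $\mathfrak{K}$ with $F(A)=0$, so its minimal polynomial over $\mathfrak{K}$ is one of the $\phi_j$. Therefore $\mathfrak{K}(A)\subseteq \widehat{\bL}$ is $\mathfrak{K}$-isomorphic to $\bL_j = \mathfrak{K}(B_j)$, and by the uniqueness half of Proposition 2.6 applied over $\mathfrak{K}$, the restriction of $\parallel\cdot\parallel$ to $\bL$ must coincide with $|\cdot|_j$. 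A density argument analogous to Stage 3 identifies all of $\widehat{\bL}$ with $\mathfrak{K}(A) \cong \bL_j$. The main obstacle I anticipate is precisely this last step: one must be careful that the identification of $\widehat{\bL}$ with an $\bL_j$ respects the embedding $\bK\hookrightarrow \mathfrak{K}$, and that Proposition 2.6's uniqueness is applied to the correct finite extension of the complete field $\mathfrak{K}$; the rest is essentially formal manipulation of minimal polynomials and closures.
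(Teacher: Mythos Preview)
The paper does not supply a proof of this proposition: it is quoted verbatim as \cite[Chapter 9, Lemma 2.1]{cassels1986local} and used as a black box, so there is no argument in the paper to compare against. Your four-stage outline is correct and is essentially the standard proof one finds in Cassels: separability gives distinctness of the $\phi_j$, the embeddings come from the factorisation of $F$ over $\mathfrak{K}$, the norms $|\cdot|_j$ are pulled back from the unique extension on the complete field $\bL_j$ (your ``Proposition~2.6'' is the paper's Proposition~\ref{prop: valuation_extension1}), and exhaustiveness follows by completing with respect to an arbitrary extension and identifying the result with some $\bL_j$. The only cosmetic point is that your internal references (``Proposition~2.6/2.7'') do not match the paper's numbering, but the intended results are unambiguous.
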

\begin{remark}\label{rmk: valuation_extension}
We apply these extension propositions in the following two situations. First, we describe how to extend the valuation $\nu$ on $\Lambda_{\bF_p}$ to a finite field extension $\Lambda_{\bF_p}(\gamma)$ for an algebraic element $\gamma$ over $\Lambda_{\bF_p}$. In this case, we note that $\Lambda_{\bF_p}$ is complete with respect to the norm $|\cdot|=e^{-\nu(\cdot)}$, therefore, Proposition \ref{prop: valuation_extension1} gives us an extension of the norm and, hence, the valuation, to $\Lambda_{\bF_p}(\gamma)$. The same argument applies to the extension of $\nu$ to the algebraic closure $\ol\Lambda_{\bF_p}$. Similarly, we now explain how to extend the valuation $\nu$ on $Q(\Lambda_{\Z})$ to a valuation on a field extension $Q(\Lambda_{\Z})(\alpha)$, where $\alpha$ is algebraic over $Q(\Lambda_{\Z})$. We note that $Q(\Lambda_{\mathbb{Z}})$ has characteristic zero making it a perfect field, which implies that $Q(\Lambda_{\mathbb{Z}})(\alpha)$ is a separable extension. Therefore, by Proposition \ref{prop: valuation_extension2} one obtains a norm $|\cdot|$ and, thus, a valuation $\nu$ on $Q(\Lambda_{\mathbb{Z}})(\alpha)$. 
\end{remark}

\begin{prop}[Chapter 7, Corollary 1 in \cite{cassels1986local}]\label{prop: unique_extension}
There is a unique extension of $|\cdot|$ to the algebraic closure $\overline{\bK}$ of $\bK$
\end{prop}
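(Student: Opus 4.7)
The plan is to bootstrap from Proposition \ref{prop: valuation_extension1}, which already furnishes a \emph{unique} extension of $|\cdot|$ to every finite extension of $\bK$, and then to glue these compatibly over the directed system of finite subextensions of $\overline{\bK}$.

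First I would observe that $\overline{\bK}=\bigcup_{\bL}\bL$, where $\bL$ ranges over the finite subextensions $\bK\subset\bL\subset\overline{\bK}$. For each such $\bL$ of degree $n=[\bL:\bK]$, Proposition \ref{prop: valuation_extension1} provides the unique extension $\|\cdot\|_{\bL}$ given by $\|A\|_{\bL}=|N_{\bL/\bK}(A)|^{1/n}$. The key compatibility step is: if $\bL_1\subset\bL_2$ are two finite subextensions, then $\|\cdot\|_{\bL_2}$ restricted to $\bL_1$ coincides with $\|\cdot\|_{\bL_1}$. This is immediate from the uniqueness clause of Proposition \ref{prop: valuation_extension1} applied to $\bL_1$: both the restriction and $\|\cdot\|_{\bL_1}$ itself are extensions of $|\cdot|$ from $\bK$ to the finite extension $\bL_1$, hence must agree.

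With compatibility in hand, I would define $\|\alpha\|:=\|\alpha\|_{\bL}$ for any finite subextension $\bL\subset\overline{\bK}$ containing $\alpha$; well-definedness follows by passing to the compositum $\bL\bL'$ for any two candidate $\bL,\bL'$ and invoking compatibility. Verifying the three non-Archimedean norm axioms on $\overline{\bK}$ is then painless, since any finite collection of elements (in particular any pair $\alpha,\beta$) lies in some common finite extension $\bL$, where the axioms hold for $\|\cdot\|_{\bL}$.

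For uniqueness on $\overline{\bK}$, suppose $\|\cdot\|'$ is any non-Archimedean norm on $\overline{\bK}$ extending $|\cdot|$. Its restriction to each finite subextension $\bL/\bK$ is a non-Archimedean norm extending $|\cdot|$, and by the uniqueness of Proposition \ref{prop: valuation_extension1} this restriction must equal $\|\cdot\|_{\bL}$. Since the $\bL$'s cover $\overline{\bK}$, we obtain $\|\cdot\|'=\|\cdot\|$ globally. I do not expect a real obstacle here: the entire argument is a formal consequence of the uniqueness already granted in Proposition \ref{prop: valuation_extension1}, and the only point meriting care is the compatibility across the directed system of finite extensions, which is handled by applying uniqueness at each finite stage.
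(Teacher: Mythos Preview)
Your argument is correct and is exactly the standard proof of this fact: uniqueness on each finite subextension (from Proposition~\ref{prop: valuation_extension1}) forces compatibility across the directed system, which lets you glue to a well-defined norm on $\overline{\bK}$, and uniqueness on $\overline{\bK}$ follows by restricting any candidate extension to finite subextensions. Note that the paper does not supply its own proof of this proposition; it is simply quoted from \cite{cassels1986local}, and your write-up is essentially what one finds there.
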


\begin{remark}\label{rmk: unique_extension}
 Proposition \ref{prop: unique_extension} implies that the extensions of the valuations on $\Lambda_{\bF_p}$ and $\Lambda_{\bF_p}(\gamma)$ to a valuation on $\ol\Lambda_{\bF_p}$ coincide.
\end{remark}

The following proposition implies that for a sufficiently large prime $p$, elements in the field of fractions $Q(\Lambda_{\Z})$ of $\Lambda_{\Z}$ can be reduced modulo $p$. Its proof follows closely that of \cite[Theorem 4.1]{Hofer-Salamon} and \cite[Theorem~4.2]{Hofer-Salamon}.
\begin{prop}
\label{prop:finite_denominators}
         Let $Q(\Lambda_{\mathbb{Z}})$ be the field of fractions of $\Lambda_{\mathbb{Z}}$. If $f\in Q(\Lambda_{\mathbb{Z}})$, then it can be written as $\sum c_jT^{\mu_j}$, where $c_j\in\mathbb{Q}$ and only finitely many primes appear in the denominators of the coefficients of $f$.
\end{prop}
\begin{proof}
Note that if $$h=\displaystyle\sum_{i=-N}^\infty h_iT^{\nu_i}$$ is an element in $\Lambda_{\mathbb{Z}}$, then $T^{-\nu_{-N}}h$ only has nonnegative exponents. Suppose that $f\in Q(\Lambda_{\mathbb{Z}})$. We may assume without loss of generality that 
        \begin{align*}
            f=\displaystyle\frac{a_0+\displaystyle\sum_{i=1}^\infty a_iT^{\lambda_i}}{b_0+\displaystyle\sum_{j=1}^\infty b_jT^{\theta_j}},
        \end{align*}
        where $0<\lambda_1<\lambda_2<\cdots$ and $0<\theta_1<\theta_2<\cdots$. Set
        \begin{align*}
            A:= a_0+\displaystyle\sum_{i=1}^\infty a_iT^{\lambda_i} \quad \text{and}\quad B:= b_0+\displaystyle\sum_{j=1}^\infty b_jT^{\theta_j}
        \end{align*}
        and let $g_0=\displaystyle\frac{a_0}{b_0}$. Then
        \begin{align*}
            A-g_0B=\displaystyle\sum_{i=1}^\infty a_iT^{\lambda_i}-\sum_{j=1}^\infty\frac{a_0b_j}{b_0}T^{\theta_j},
        \end{align*}
        and the leading term has a nonnegative exponent. 

        \textbf{Case 1. $\lambda_1<\theta_1$} The leading term in $A-g_0B$ is $a_1T^{\lambda_1}$. Define $g_1=\displaystyle\frac{a_0}{b_0}+\frac{a_1}{b_0}T^{\lambda_1}$. Then 
        \begin{align*}
            A-g_1B=\displaystyle\sum_{i=2}^\infty a_iT^{\lambda_i}-\sum_{j=1}^\infty\frac{a_0b_j}{b_0}T^{\theta_j}-\sum_{j=1}^\infty\frac{a_1b_j}{b_0}T^{\lambda_1+\theta_j},
        \end{align*}
        which has the exponent of the leading term greater than $\lambda_1$.

        \textbf{Case 2. $\lambda_1=\theta_1$} The leading term in $A-g_0B$ is $\left(a_1-\displaystyle\frac{a_0b_1}{b_0}\right)T^{\lambda_1}$. Define $g_1=\displaystyle\frac{a_0}{b_0}+\left(\frac{a_1}{b_0}-\frac{a_0b_1}{b_0^2}\right)T^{\lambda_1}.$ Then
        \begin{align*}
            A-g_1B=\displaystyle\sum_{i=2}^\infty a_iT^{\lambda_i}-\sum_{j=2}^\infty\frac{a_0b_j}{b_0}T^{\theta_j}-\sum_{j=1}^\infty\left(\frac{a_1}{b_0}-\frac{a_0b_1}{b_0^2}\right)b_jT^{\lambda_1+\theta_j},
        \end{align*}
         which has the exponent of the leading term greater than $\lambda_1$.

         \textbf{Case 3. $\lambda_1>\theta_1$} The leading term in $A-g_0B$ is $-\displaystyle\frac{a_0b_1}{b_0}T^{\theta_1}$. Define $g_1=\displaystyle\frac{a_0}{b_0}-\frac{a_0b_1}{b_0^2}T^{\theta_1}$. Then 
         \begin{align*}
             A-g_1B=\displaystyle\sum_{i=1}^\infty a_iT^{\lambda_i}-\sum_{j=2}^\infty\frac{a_0b_j}{b_0}T^{\theta_j}+\sum_{j=1}^\infty\frac{a_0b_1b_j}{b_0^2}T^{\theta_1+\theta_j},
         \end{align*}
         which has the exponent of the leading term greater than $\theta_1$. 

One can repeat this process to get $g_n$ for $n\in\mathbb{N}$. It is easy to see that the exponent of the leading term of $A-g_nB$ strictly grows as $n$ tends to infinity. Next, we show that, in fact, the sequence of exponents of the leading term diverges. We then conclude that $f=\lim g_n$. 

Let $\Sigma(A-g_nB)$ be the set of finite linear combinations of the exponents of the terms in $A-g_nB$ over nonnegative integers. Then 
         \begin{align*}
             \cdots\subset\Sigma(A-g_{n+1}B)\subset\Sigma(A-g_nB)\subset\cdots\subset\Sigma(A-g_0B).
         \end{align*}
Thus, the exponent of the leading term of $A-g_nB$ strictly grows in $\Sigma(A-g_0B)$. Note that $\Sigma(A-g_0B)$ is a discrete subspace of $\R$, and, hence, the leading exponent of $A-g_nB$ must, in fact, diverge. Indeed, for any $C>0$ there exist positive integers $K$ and $m$ such that $\theta_i\geq C$ and $\lambda_i\geq C$ for all $i\geq K$, and such that $m\theta_i\geq C$ and $m\lambda_i\geq C$ for all $i\geq1$. In particular, if $E$ is the (finite) set of finite linear combinations of the form $\sum p_i\lambda_i + q_i\theta_i$, $i<K$, and $p_i<m$, $q_i<m$ for all $i$, then
\begin{equation*}
	\Sigma(A-g_0B)\cap(-\infty,C)\subset E.
\end{equation*}
Therefore, it is clear that any $\xi\in\Sigma(A-g_0B)\cap(-\infty,C)$ is isolated. Since, $C>0$ was chosen arbitrarily, it follows that $\Sigma(A-g_0B)$ is discrete. 

We now show that  $f = \lim g_n$. Recall $$f-g_n = \frac{A}{B}-g_n =\frac{A-g_nB}{B}.$$ It follows from the preceeding paragraph that the valuation of $A-g_nB$ diverges as $n\ra\infty$. Thus, the norm of $A-g_nB$ tends to $0$ as $n\ra\infty$ (because the norm is defined as $e^{-val}$). Consequently, $A-g_nB$ and, hence, $f-g_n$ converge to zero. Finally, the primes appearing in the denominators of the coefficients of $f$ are the primes dividing $b_0$, of which there are finitely many.

%Denote the exponent of the leading term of $A-g_nB$ by $\xi_n$. Suppose that $\xi_n$ is bounded and, hence, converges to $\xi=\sum p_i\lambda_i+q_i\theta_i$ where $i\geq1$, $p_i$ and $q_i$ are integers, and only finitely many of $p_i, q_i$ are nonzero. Take $\epsilon<\min\{{\lambda_i,\theta_i |p_i,q_i\neq 0}\}$. Then, the neighborhood $$B_{\epsilon}(\xi) = \{x \in\R\,|\,|x-\xi|< \epsilon\}$$ does not contain any finite linear combination of the exponents of the terms in $A-g_nB$ over nonnegative integers. In particular, $B_{\epsilon}(\xi)$ does not contain any $\xi_n$ since it is a finite linear combination of the exponents of the terms in $A-g_nB$ over nonnegative integers. This contradicts the assumption. Hence $\xi_n$ diverges.        
\end{proof}

\begin{remark}\label{rmk: reduction}
Note that reduction of coefficients is not possible for arbitrary finite sets $\{f_1,\cdots, f_k\}\subset\Lambda_{\Q}$. Indeed, one can have elements of the form \[\sum_{l=-K}^{\infty}\frac{1}{l!}T^{l},\] which has infinitely many primes in the denominators. In particular, one cannot reduce the coefficients. The previous proposition, shows that this does not happen in $Q(\Lambda_{\Z})$.
\end{remark}

\subsubsection{Non-Archimedean filtrations}\label{sec: Non-arch_filtration}
Let $\Lambda$ be a field with a non-Archimedean valuation $\nu$. Suppose $C$ is a finite dimensional module over $\Lambda$. 
\begin{definition}
A non-Archimedean filtration is a function $l:C\ra\R\cup\{-\infty\}$ satisfying the following properties:
	\begin{enumerate}
	\item $l(x)=-\infty$ if and only if $x=0$,
	\item $l(\lambda x)=l(x)-\nu(\lambda)$ for all $\lambda\in\Lambda, x\in C$,
	\item $l(x+y)\leq\max\{l(x),l(y)\}$\label{def: max_prop}
	\end{enumerate}
\end{definition}
It is not hard to check that the maximum property (\ref{def: max_prop}) implies that when $l(x)\neq l(y)$, we have that $l(x+y)=\max\{l(x),l(y)\}$, see \cite{entov2003calabi, usher2016persistent}. We call a $\Lambda$-basis $(x_1,\cdots,x_{B})$ of $(C, l)$ \textit{orthogonal} if 
\begin{equation*}
	l\Big(\sum\lambda_i x_i\Big) = \max\{l(x_{i}) - \nu(\lambda_i)\}
\end{equation*}
for all $\lambda_{i}\in\Lambda$. It is called \textit{orthonormal} if it also satisfies $l(x_i)=0$ for all $i$.

In order to define a non-Archimedean filtration $\cA:\cf(H; \Lambda_{\bK})\ra\R\cup\{-\infty\}$,
we choose the $\Lambda_{\bK,\om}$-basis $\{x_1,\cdots,x_N\}$ of $\cf(H;\Lambda_{\bK,\om})$ given by the set of contractible fixed points of $\phi_H$ equipped with a fixed choice of capping and set
\begin{equation}
	\cA\Big(\sum \lambda_{i}x_{i}\Big)=\max\{\cA_{H}(x_{i})-\nu(\lambda_{i})\}.
\end{equation}
Equivalently, we declare $\{x_1,\dots,x_N\}$ to be an orthogonal basis of $(\cf(H;\Lambda_{\bK}), \cA)$. We note that for any non-trivial $x\in\cf(H;\Lambda_{\bK})$ we have $\cA(d_{F}(x))<\cA(x)$. In this case, the filtered complex is said to be \textit{strict}. The basis given by $\{\til{x}_1,\cdots,\til{x}_N\}=\{T^{\cA(x_1)}x_1,\cdots,T^{\cA(x_N)}x_N\}$ is orthonormal.

 We now consider the case where we extend the coefficients of $\cf(H;\Lambda_{\bK})$ to $\ol\Lambda_{\bK}$. For an orthogonal basis $\{x_1,\dots,x_N\}$ of $(\cf(H;\Lambda_{\bK}),\cA)$, $\{x_1\otimes1,\dots, x_N\otimes1\}$ is an orthogonal basis of $\cf(H;\ol\Lambda_{\bK})$. We define a non-Archimedean filtration $\ol\cA$ on $\cf(H;\ol\Lambda_{\bK})$ by setting
\begin{equation*}
	\ol\cA\Big(\sum\ol\lambda_{i}x_{i}\otimes1\Big)=\max\{\cA_{H}(x_{i})-\ol\nu(\ol\lambda_{i})\},
\end{equation*}
where $\ol\nu$ is the non-Archimedean valuation on $\ol\Lambda_{\bK}$ described in Remark \ref{rmk: valuation_extension}. One verifies that $\{T^{\cA(x_1)}x_1\otimes1,\dots, T^{\cA(x_N)}x_N\otimes1\}$ is an orthonormal basis of  $(\cf(H;\ol\Lambda_{\bK}),\ol\cA)$. Any orthonormal basis $\{y_1,\dots,y_N\}$ of $(\cf(H;\Lambda_{\bK}),\cA)$ is related to the orthonormal basis $\{\til{x}_1,\dots, \til{x}_N\}$ by an invertible matrix $A\in \mrm{GL}(N,\lamzero_{\bK})$ in the sense that $A(T^{\cA(x_j)}x_j)=y_j$, furthermore, \[\{y_1\otimes1,\dots,y_N\otimes1\}=A\{x^\prime_1\otimes1,\dots,x^\prime_N\otimes1\}\] is an orthonormal basis of $(\cf(H;\ol\Lambda_{\bK}),\ol\cA)$.

\subsection{Quantum homology}
\subsubsection{Definition of quantum homology}
We follow \cite{mcduff2012j} for the definition of quantum homology. The quantum homology of $M$ is defined by $\qh_*(M)=\qh_*(M,\Lambda_{\Z})=\h_*(M)\otimes\Lambda_{\mathbb{Z}}$. There is a product on \[\qh_{ev}(M)=\bigoplus_{i} \qh_{2i}(M)\] defined as follows. Choose an integer basis $e_0, \dots, e_N$ of the free part of $\h_*(M;\mathbb{Z})$ such that $e_0=[M]\in \h_{2n}(M)$ and each basis element $e_\nu$ has pure degree. Define the integer matrix $g_{\nu\mu}$ by 
\begin{center}
    $g_{\nu\mu}:=\displaystyle\int_M PD(e_\nu)\smile PD(e_\mu),$
\end{center}
and let $g^{\nu\mu}$ denote the inverse matrix. Then the product of $a, b\in \h_{ev}(M)$ is defined by
\begin{center}
  $a*b:=\displaystyle\sum_{A}\sum_{\nu, \mu}GW^M_{A, 3}(a, b, e_\nu)g^{\nu\mu}e_\mu T^{\omega(A)}$.  
\end{center}
The product of $a$ and $b$ can also be expressed as 
\begin{center}
    $a*b=\displaystyle\sum_{A}(a*b)_AT^{\omega(A)}$
\end{center}
where
\begin{center}
    $(a*b)_A:=\displaystyle\sum_{\nu, \mu}GW^M_{A,3}(a, b, e_\nu)g^{\nu\mu}e_\mu\in\h_{deg(a)+deg(b)+2c_1(A)-2n}(M)$
\end{center}
which is characterized by the condition
\begin{center}
    $\displaystyle\int_MPD((a*b)_A)\smile c:=GW^M_{A, 3}(a, b, PD(c))$
\end{center}
for $c\in \h^*(M)$. For a base field, $\bK$ we define $\qh_*(M,\Lambda_{\bK})=\h_*(M)\otimes\Lambda_{\mathbb{\bK}}$, and the product is defined in the same way. Also, if $\bL$ is a field containing $\Lambda_{\Z}$ or if it is a field extension of $\Lambda_{\bK}$, we set $\qh(H,\bL)=\qh(M)\otimes\bL$ and extend the quantum product linearly.

\subsubsection{Semisimplicity of quantum homology}
We note that the quantum product is graded commutative, however, since we are considering only the even degrees, $\qh_{ev}(M,\Lambda_{\bK})$ has the structure of a commutative algebra over $\Lambda_{\bK}$. Hence, it is semiseimple if it splits as an algebra into a direct sum of fields $F_1\oplus\cdots\oplus F_m$. In particular $F_j$ is a finite dimension vector space over $\Lambda_{\bK}$ for each $j$. 

\begin{remark}\label{rmk: semiseimple_diff}
Other notions of semisimplicity have been considered in the non-monotone setting, for instance we can ask that $\qh_{2n}(M,\Lambda_{\omega,\bK})$ is a semisimple algebra over the field $\Lambda_{\omega,\bK;0}$, which is the degree $0$ component of $\Lambda_{\omega, \bK}$. We now show that this condition implies that $\qh_{ev}(M,\Lambda_{\bK})$ is semisimple. Indeed, by \cite[Proposition 2.1(A)]{entov2008symplectic} it follows that if $\qh_{2n}(M,\Lambda_{\omega,\bK})$ is semisimple over $\Lambda_{\omega,\bK;0}$, then $\qh_{2n}(M,\Lambda_{\omega,\bK})\otimes\Lambda_{\bK}$ is semisimple over $\Lambda_{\bK}$. Furthermore, denote by $\Lambda_{\omega,\bK;j}$ the degree $2j$ component of $\Lambda_{\omega,\bK}$ and note that,\[\qh_{2n}(M,\Lambda_{\omega,\bK})=\bigoplus\h_{2i}(H;\bK)\otimes\Lambda_{\omega,\bK;n-i}.\] Therefore, $\qh_{ev}(M,\Lambda_{\bK})=\qh_{2n}(M,\Lambda_{\omega,\bK})\otimes\Lambda_{\bK}$, which by applying \cite[Proposition 2.1(A)]{entov2008symplectic} once more, implies  that it is semisimple.
\end{remark}

\subsubsection{PSS isomorphism}
Piunikhin, Salamon and Schwarz \cite{piunikhin1996symplectic} defined the $PSS$ isomorphism between Hamiltonian Floer homology and quantum homology
\begin{center}
    $\pss_{H}:\qh(M)\to\hf(H)$.
\end{center}
On the chain level and for generic auxiliary data, the map is defined by counting certain isolated configurations consisting of negative gradient trajectories $\gamma:(-\infty,0]\ra M$ of a generic Morse-Smale pair\footnote{A Morse function $f$ and Riemannian metric $g$ on $M,$ satisfying the Morse-Smale condition.} incident at $\gamma(0)$ with the asymptotic $\lim_{s \to -\infty}u(s,\cdot)$ of a map $u:\R\times\rS^1\ra M$ of finite energy, satisfying the Floer equation
\begin{equation*}
\frac{\partial u}{\partial s} + J_t(u)\left(\frac{\partial u}{\partial t} - X_K^t(u)\right)=0,
\end{equation*}
for $(s,t)\in\R\times\rS^1$ and $K(s,t)\in C^\infty(M,\R)$ a small perturbation of $\beta(s)H_t$ such that $K(s,t) = \beta(s) H_t$ for $s\ll-1$ and for $s\gg+1$. Here $\beta:\R\ra[0,1]$ is a smooth function satisfying $\beta(s)=0$ for $s\ll-1$ and $\beta(s)=1$ for $s\gg +1$. This map produces an isomorphism of $\Lambda_{\bK,\om}$-modules, which intertwines the quantum product on $\qh(M)$ with the pair-of-pants product on Hamiltonian Floer homology. It is extended by linearity when we work over the universal Novikov ring $\Lambda_{\bK}$ and its algebraic closure $\ol\Lambda_{\bK}$.

\subsubsection{Filtration on quantum homology}
Consider the non-Archimedean valuation $\nu$ on $\Lambda_{\mathbb{Z}}$ defined in Section \ref{sec: non-Arch_val}. For each element $\sum f_i\alpha_i$ where $f_i\in\Lambda_{\mathbb{Z}}$ and $\alpha_i\in\h_*(M)$ define the filtration $l:\qh(M)\ra\R\cup\{-\infty\}$ to be $l(\sum f_i\alpha_i)=\max\{-\nu(f_i)\}$. Now, as in \cite{polterovich2017persistence, shelukhin2022hofer}, for each $\alpha\in \qh(M)$, we have a map 
\begin{align*}
	\alpha*: \hf(H)^{<a}&\longrightarrow\hf(H)^{<a+l(\alpha)}
\end{align*}
defined by counting negative $g$-gradient trajectories $\ga:(-\infty,0]\rightarrow M$ of a Morse function $f$ on $M$, for a Morse-Smale pair $(f,g)$, asymptotic to critical points of $f$ as $s\rightarrow-\infty$, and with $\gamma(0)$ incident to Floer cylinders $u:\R\times\rS^1\rightarrow M$ at $u(0,0)$. This construction is reminiscent of the quantum cap product as in \cite{piunikhin1996symplectic, schwarz2000action, SeidelMCG, Floer3}.

\subsection{Spectral invariants}
\subsubsection{Definitions and basic properties}
Let $H$ be a non-denegerate Hamiltonian function and consider the filtered complex $(\cf(H;\Lambda), \cA)$, where $\Lambda$ is one of the following $\Lambda_{\bK, \om}, \Lambda_{\bK}, \ol\Lambda_{\bK}$ and $\cA$ is as in Section \ref{sec: Non-arch_filtration}. Denote $\cf(H;\Lambda)^{<c}=\cA^{-1}(-\infty,c)$ and $\hf(H;\Lambda)^{<c}$ the filtered homology groups. The spectral invariant associated to a non-trivial $\alpha\in\qh(M)$ is defined as
\begin{equation*}
	c(\alpha, H) = \inf\{a\in\R\,|\,\pss_{H}(\alpha)\in\im(\hf(H;\Lambda)^{<a}\ra\hf(H;\Lambda))\}.
\end{equation*}
By \cite{biran2009rigidity}, spectral invariants do not change under extension of coefficients, in particular, we do not need to specify the $\Lambda$ in the notation. Spectral invariants enjoy a wealth of useful properties established by Schwarz \cite{schwarz2000action}, Viterbo \cite{viterbo1992symplectic}, Oh \cite{oh2005construction, oh2006lectures, mcduff2012j} and generalized by Usher \cite{usher2008spectral, usher2010duality}. We summarize some of their properties.
\begin{prop}
The spectral invariants satisfy the following:
	\begin{enumerate}
	\item{Stability: } for all $H,G\in\cH$ and $\alpha\in\qh(M)\setminus\{0\}$, \[\int^{1}_{0}\min(H_{t}-G_{t})dt\leq c(\alpha,H)-c(\alpha,G)\leq\int^{1}_{0}\max(H_{t}-G_{t})dt.\]
	\item{Triangle inequality:} for all $H,G\in\cH$ and $\alpha,\beta\in\qh(M)\setminus\{0\}$, \[c(\alpha*\beta,H\#G)\leq c(\alpha,H)+c(\beta,G).\]
	\item{Novikov action:} for all $H\in\cH,\alpha\in\qh(M)\setminus\{0\}$, and $\lambda\in\Lambda$,\[c(\lambda\alpha,H)=c(\alpha,H)-\nu(\lambda).\]
	\item{Non-Archimedean property:} for all $H\in\cH$ and $\alpha,\beta\in\qh(M)\setminus\{0\}$,\[c(\alpha+\beta,H)\leq\max\{c(\alpha,H),c(\beta,H)\}.\]
	\end{enumerate}
\end{prop}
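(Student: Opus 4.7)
Each of the four properties is classical and the plan is to transcribe the proofs of Schwarz \cite{schwarz2000action}, Oh \cite{oh2005construction, oh2006lectures}, and Usher \cite{usher2008spectral, usher2010duality} to the semipositive setting, where the continuation and pair-of-pants machinery behaves as in the monotone case because semipositivity is precisely what is needed to guarantee transversality and codimension-one compactness of the relevant moduli spaces. By \cite{biran2009rigidity} the value of $c(\alpha,H)$ is unchanged under extending coefficients from $\Lambda_{\bK,\om}$ to $\Lambda_{\bK,univ}$ or $\ol\Lambda_{\bK,univ}$, so in each item one may work with whichever coefficient ring is most convenient.

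For stability, I would construct the standard linear continuation chain map $\Phi:\cf(G;\Lambda)\to\cf(H;\Lambda)$ built from Floer solutions of the parametrized equation associated with the path $s\mapsto (1-\beta(s))G + \beta(s)H$. The energy-action identity for such solutions yields $\cA(\Phi(x))\leq \cA(x) + \int_0^1\max(H_t-G_t)\,dt$, so $\Phi$ descends to $\hf(G;\Lambda)^{<a}\to\hf(H;\Lambda)^{<a+\int\max(H-G)}$. Since $\Phi\circ\pss_G$ is chain-homotopic to $\pss_H$, applying this to any $a>c(\alpha,G)$ yields $c(\alpha,H)\leq c(\alpha,G)+\int\max(H_t-G_t)\,dt$; swapping the roles of $H$ and $G$ produces the lower bound. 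For the triangle inequality, I would consider the moduli space of pair-of-pants Floer solutions with incoming ends asymptotic to orbits of $H$ and $G$ and outgoing end asymptotic to orbits of $H\#G$; counting rigid elements defines a filtered chain-level product $\cf(H;\Lambda)^{<a}\otimes\cf(G;\Lambda)^{<b}\to\cf(H\#G;\Lambda)^{<a+b}$ that under $\pss_H\otimes\pss_G$ and $\pss_{H\#G}$ intertwines with the quantum product on $\qh(M)$. Taking infima over $a>c(\alpha,H)$ and $b>c(\beta,G)$ then produces the triangle inequality.

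The last two properties are algebraic consequences of the filtration axioms of Section~\ref{sec: Non-arch_filtration}. Multiplication by $\lambda\in\Lambda$ is a chain automorphism of $\cf(H;\Lambda)$ that shifts $\cA$ by exactly $-\nu(\lambda)$, hence carries $\cf(H;\Lambda)^{<a}$ isomorphically onto $\cf(H;\Lambda)^{<a-\nu(\lambda)}$; since $\pss_H$ is $\Lambda$-linear, one obtains $c(\lambda\alpha,H)=c(\alpha,H)-\nu(\lambda)$ directly from the definition. The non-Archimedean property is a translation of axiom (3) in Definition~\ref{def: valuation}: for any $c>\max\{c(\alpha,H),c(\beta,H)\}$, both $\pss_H(\alpha)$ and $\pss_H(\beta)$ lie in the image of $\hf(H;\Lambda)^{<c}\to\hf(H;\Lambda)$, hence so does their sum $\pss_H(\alpha+\beta)$, which is enough.

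The main obstacle in the semipositive setting is verifying that the two geometric constructions used above, namely continuation for stability and the pair-of-pants for the triangle inequality, interact correctly with the $\pss$ isomorphism at the chain level and shift the filtration by no more than the expected amount. This reduces to running the standard gluing, compactness, and energy estimates while ruling out codimension-one sphere bubbling via the semipositivity hypothesis; as this was already carried out to define the quantum product on $\qh(M;\Lambda_{\bK,univ})$ and to construct the $\pss$-intertwining in \cite{piunikhin1996symplectic, mcduff2012j}, no genuinely new analytic input is required.
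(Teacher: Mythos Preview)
Your sketch is correct and follows the standard arguments; in fact the paper does not prove this proposition at all but simply states it with references to Schwarz, Viterbo, Oh, and Usher, so your outline is more detailed than what appears there. The approaches you describe---continuation maps for stability, the pair-of-pants product for the triangle inequality, and direct filtration arguments for the Novikov and non-Archimedean properties---are precisely the ones in the cited literature.
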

We remark that by the stability property, the spectral invariants are defined for all $H\in\cH$ and all the above listed properties apply in this generality.

\subsubsection{Poincare Duality for Spectral Invariants}
Let $H$ be a non-degenerate Hamiltonian function and fix a choice of capping $\ol{x}_k$ for each $1$-periodic orbit $x_k$. Define a bilinear pairing
\begin{align*}
    \Delta: \cf_*(H, \overline{\Lambda}_{\bF_p})\times \cf_*(\ol{H}, \overline{\Lambda}_{\bF_p})&\longrightarrow\overline{\Lambda}_{\bF_p}\\
    \Big(\sum a_i\ol{x}_i, \sum b_j\ol{x}^\prime_j\Big)&\longmapsto\sum a_ib_i
\end{align*}
where the sums are finite and for all $k$, the capped orbit $\ol{x}^\prime_k$ is equal to $\ol{x}_k$ with reversed orientation, in particular, $x^\prime_k(t)=x_k(1-t)$.
\begin{lem}
The bilinear pairing $\Delta$ is non-degenerate.
\end{lem}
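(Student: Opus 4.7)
The plan is to exhibit $\{\bar{x}_k\}$ and $\{\bar{x}'_k\}$ as explicit dual bases under $\Delta$, at which point the non-degeneracy is immediate from the definition of the pairing.

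First, I would set up the bijection between the generators of the two complexes. Since $\phi_{\ol{H}} = \phi_H^{-1}$, the time-one fixed points of $\phi_H$ and $\phi_{\ol{H}}$ coincide, and their $1$-periodic orbits correspond via $x_k(t) \leftrightarrow x_k'(t) = x_k(1-t)$. The reversed-orientation disk $\bar{x}_k'$ caps $x_k'$ because if $\bar{x}_k|_{\partial \D}(e^{2\pi it}) = x_k(t)$, then reversing the orientation of the disk gives boundary map $t \mapsto x_k(-t) = x_k(1-t) = x_k'(t)$. The capping equivalence class is then well-defined in $\til{\cL}M$ for $\ol{H}$, and with the chosen cappings the family $\{\bar{x}_1,\dots,\bar{x}_N\}$ is an $\ol\Lambda_{\bF_p,univ}$-basis of $\cf_*(H;\ol\Lambda_{\bF_p,univ})$ and $\{\bar{x}_1',\dots,\bar{x}_N'\}$ is an $\ol\Lambda_{\bF_p,univ}$-basis of $\cf_*(\ol{H};\ol\Lambda_{\bF_p,univ})$.

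Next, I would observe that by the defining formula, $\Delta(\bar{x}_i,\bar{x}_j') = \delta_{ij}$. Hence $\Delta$ is, tautologically, the coefficient pairing with respect to these two ordered bases. For non-degeneracy, suppose $\alpha = \sum a_i\bar{x}_i \in \cf_*(H;\ol\Lambda_{\bF_p,univ})$ satisfies $\Delta(\alpha,\beta) = 0$ for every $\beta \in \cf_*(\ol{H};\ol\Lambda_{\bF_p,univ})$. Evaluating at $\beta = \bar{x}_k'$ yields $a_k = 0$ for every $k$, so $\alpha = 0$. The symmetric argument, evaluating a $\beta = \sum b_j\bar{x}_j'$ assumed to annihilate every $\alpha$ on the vectors $\alpha = \bar{x}_k$, gives $b_k = 0$ for every $k$. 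Therefore $\Delta$ is non-degenerate on both sides.

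Since the pairing is constructed precisely so as to make $\{\bar{x}_k\}$ and $\{\bar{x}_k'\}$ dual bases, there is no genuine obstacle; the only point worth checking carefully is that reversing orientation of the capping disks is compatible with the equivalence relation defining $\til{\cL}M$ and with the natural bijection on $1$-periodic orbits, which is the content of the first paragraph above.
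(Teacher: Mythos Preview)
Your proof is correct and follows essentially the same approach as the paper: both arguments evaluate the pairing against the individual basis elements $\bar{x}'_k$ (respectively $\bar{x}_k$) to extract each coefficient and conclude that an element annihilating everything must be zero. Your additional remarks verifying that the reversed-orientation cappings are well-defined are harmless elaborations, but the core argument is identical.
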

\begin{proof}
Suppose $\Delta(\sum a_i\ol{x}_i, \cdot)=0$. Then, for every $\ol{x}^\prime_i$, \[\Delta\Big(\sum a_i\ol{x}_i, \ol{x}^\prime_i\Big)=0.\] On the other hand, $\Delta(\sum a_i\ol{x}_i, \ol{x}^\prime_i)=a_i$. Thus $a_i=0$ for each $i$, i.e. $\sum a_i\ol{x}_i=0$. Similarly, if $\Delta(\cdot, \sum b_j\ol{x}^\prime_j)=0$, then $\sum b_j\ol{x}^\prime_j=0$.
\end{proof}

\begin{lem}\label{AAA}
For any real number $\alpha$ the composition $\nu\circ\Delta$ is positive on \[\cf_*(H, \overline{\Lambda}_{\bF_p})^{<\alpha}\times \cf_*(\ol{H}, \overline{\Lambda}_{\bF_p})^{<-\alpha}.\]

\end{lem}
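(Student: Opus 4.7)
The plan is to unpack both filtration conditions in terms of valuations of coefficients in the chosen bases and then invoke the non-Archimedean triangle inequality. Fix the orthogonal basis $\{\overline{x}_i\}$ of $\cf_*(H;\overline{\Lambda}_{\bF_p,univ})$ and the corresponding basis $\{\overline{x}'_i\}$ of $\cf_*(\ol{H};\overline{\Lambda}_{\bF_p,univ})$, where $\overline{x}'_i$ is $\overline{x}_i$ with reversed orientation of both loop and capping. Write a generic pair of elements as
\[ X=\sum_i a_i\,\overline{x}_i\in\cf_*(H;\overline{\Lambda}_{\bF_p,univ})^{<\alpha},\qquad Y=\sum_j b_j\,\overline{x}'_j\in\cf_*(\ol{H};\overline{\Lambda}_{\bF_p,univ})^{<-\alpha}. \]

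First I would record the key action identity $\cA_{\ol{H}}(\overline{x}'_k) = -\cA_H(\overline{x}_k)$ coming from the reversal of orientation on both the loop and its capping disk. By the orthogonality of the basis recalled in Section \ref{sec: Non-arch_filtration}, the filtration hypotheses translate into
\[ \max_i\{\cA_H(\overline{x}_i)-\overline{\nu}(a_i)\}<\alpha,\qquad \max_j\{\cA_{\ol{H}}(\overline{x}'_j)-\overline{\nu}(b_j)\}<-\alpha, \]
so for every index $i$ with $a_ib_i\neq 0$,
\[ \overline{\nu}(a_i)>\cA_H(\overline{x}_i)-\alpha,\qquad \overline{\nu}(b_i)>\cA_{\ol{H}}(\overline{x}'_i)+\alpha=-\cA_H(\overline{x}_i)+\alpha. \]
Adding these strict inequalities gives $\overline{\nu}(a_i)+\overline{\nu}(b_i)>0$, hence $\overline{\nu}(a_ib_i)>0$ by multiplicativity of $\overline{\nu}$.

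The final step applies the non-Archimedean triangle inequality to $\Delta(X,Y)=\sum_i a_ib_i$:
\[ \overline{\nu}(\Delta(X,Y))\geq \min_{i:\,a_ib_i\neq 0}\overline{\nu}(a_ib_i)>0, \]
where the last inequality uses that the minimum is over finitely many strictly positive numbers. The only real subtlety in the argument is tracking the sign conventions carefully in step two: one needs to check that the chosen cappings for $\overline{x}'_k$ really do yield $\cA_{\ol{H}}(\overline{x}'_k)=-\cA_H(\overline{x}_k)$ exactly (not up to some constant), but this is a standard verification using $\int_{\overline{x}'_k}\om=-\int_{\overline{x}_k}\om$ together with the change of variables $t\mapsto 1-t$ in the action integral. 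Everything else is formal manipulation with the orthogonal basis and the ultrametric inequality.
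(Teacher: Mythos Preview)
Your proof is correct and follows essentially the same route as the paper's: translate the filtration conditions into lower bounds on $\overline{\nu}(a_i)$ and $\overline{\nu}(b_i)$ using the orthogonal bases, add them to get $\overline{\nu}(a_ib_i)>0$, and conclude via the ultrametric inequality. If anything, you are slightly more careful than the paper in restricting attention to indices with $a_ib_i\neq 0$ and in explicitly stating and justifying the identity $\cA_{\ol{H}}(\overline{x}'_k)=-\cA_H(\overline{x}_k)$, which the paper uses tacitly.
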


\begin{proof}
Suppose that \[\Big(\sum a_i\ol{x}_i, \sum b_i\ol{x}^\prime_i\Big)\in\cf_*(H, \overline{\Lambda}_{\bF_p})^{<\alpha}\times \cf_*(\ol{H}, \overline{\Lambda}_{\bF_p})^{<-\alpha}.\] Then, \[\cA\big(\sum a_i\ol{x}_i\big)=\max\{\mathcal{A}_{H}(\ol{x}_i)-\nu(a_i)\}<\alpha.\] In particular, we have that $\nu(a_i)>\mathcal{A}_{H}(\ol{x}_i)-\alpha$ for all $i$. Following the same logic, $\nu(b_i)>-\mathcal{A}_{H}(\ol{x}_i)+\alpha$ for all $i$. Thus,
\begin{align*}
	\nu\Big(\Delta\Big(\sum a_i\ol{x}_i, \sum b_i\ol{x}^\prime_i\Big)\Big)&=\nu\Big(\sum a_ib_i\Big)\\
	&\ge \min\{\nu(a_ib_i)\}\\
	&=\min\{\nu(a_i)+\nu(b_i)\}\\
	&>\mathcal{A}_{H}(\ol{x}_i)-\alpha-\mathcal{A}_{H}(\ol{x}_i)+\alpha=0
\end{align*}
\end{proof}
\begin{lem}
Let $a\in \cf_*(H, \overline{\Lambda}_{\bF_p})$ and $b\in \cf_*(\ol{H}, \overline{\Lambda}_{\bF_p})$. Then, $\Delta(\partial(a), b)=\pm\Delta(a, \partial(b))$. In particular, there is an induced pairing on homology
\begin{equation*}
    \Delta: \hf_*(H, \overline{\Lambda}_{\bF_p})\times \hf_*(\ol{H}, \overline{\Lambda}_{\bF_p})\longrightarrow\overline{\Lambda}_{\bF_p}
\end{equation*}
\end{lem}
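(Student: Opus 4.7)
The statement has two parts: the chain-level identity $\Delta(\partial a,b)=\pm\,\Delta(a,\partial b)$, and the induced pairing on homology. The second is a formal consequence of the first: if $\partial a = 0$, then $\Delta(a,\partial b) = \pm\Delta(\partial a,b) = 0$, so $\Delta(a,\cdot)$ descends to $\hf_*(\ol H)$, and the symmetric argument handles $b$. Thus the entire content lies in the chain-level identity, which I would derive from a standard reflection bijection between Floer moduli spaces for $H$ and for $\ol H$.

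Set up the reflection as follows. Adopt the convention that capped orbits $\ol x'_k$ of $\ol H$ correspond to capped orbits $\ol x_k$ of $H$ via $x'_k(t) = x_k(1-t)$ (with the capping reflected), and that if $J=\{J_t\}$ is the almost complex structure family used for $\cf_*(H)$, then $J'_t := J_{1-t}$ is the corresponding family for $\ol H$. Given a Floer cylinder $u:\R\times \rS^1\to M$ for $(H,J)$ from $\ol x_i$ to $\ol x_k$, define
\[v(s,t) := u(-s,1-t).\]
A direct substitution into the Floer equation, using the identification $\phi_{\ol H}^t = (\phi_H^t)^{-1}$, shows that $v$ is a Floer cylinder for $(\ol H, J')$ from $\ol x'_k$ to $\ol x'_i$. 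The map $u\mapsto v$ is an involution on the total trajectory space and commutes (up to orientation) with the $\R$-translation action, hence induces a bijection
\[\cM_H(\ol x_i, \ol x_k) \;\xrightarrow{\cong}\; \cM_{\ol H}(\ol x'_k, \ol x'_i)\]
on the rigid (zero-dimensional) components.

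Expanding $a = \sum a_i \ol x_i$ and $b = \sum b_j \ol x'_j$, one computes
\[\Delta(\partial a,b) = \sum_{i,k} a_i\, n_H(\ol x_i,\ol x_k)\, b_k, \qquad \Delta(a,\partial b) = \sum_{i,k} a_i\, n_{\ol H}(\ol x'_k,\ol x'_i)\, b_k,\]
where $n_H$ and $n_{\ol H}$ are the signed counts entering the respective Floer differentials. The bijection above identifies these counts up to an overall sign, yielding the desired chain-level identity. The main technical obstacle is verifying that the sign relating $n_H(\ol x_i,\ol x_k)$ to $n_{\ol H}(\ol x'_k,\ol x'_i)$ is \emph{uniform}, i.e.\ independent of the pair of orbits; this requires tracking coherent orientations on moduli spaces through the reflection, for which I would follow the determinant-line framework of \cite{AbouzaidBook, SeidelMCG, Zap:Orient}. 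Once the global sign is pinned down, the induced pairing on homology is automatic.
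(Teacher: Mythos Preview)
Your proposal is correct and follows essentially the same approach as the paper: both arguments reduce to identifying the matrix entry $n_H(\ol x_i,\ol x_k)$ with $n_{\ol H}(\ol x'_k,\ol x'_i)$ via the reflection bijection $u(s,t)\mapsto u(-s,1-t)$ between the corresponding moduli spaces. The paper's proof is terser---it simply asserts that the two counts agree without writing out the reflection or tracking signs---whereas you spell out the involution explicitly and flag the coherent-orientation issue.
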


\begin{proof}
Choose a basis  $\{\ol{x}_i\}$ of $\cf_{k+1}(H, \overline{\Lambda}_{\bF_p})$ and a basis  $\{\ol{y}_j\}$ of $\cf_{k}(H, \overline{\Lambda}_{\bF_p})$. Then $\{\ol{x}^\prime_i\}$ is a basis of $\cf_{-k-1}(\ol{H}, \overline{\Lambda}_{\bF_p})$ and $\{\ol{y}^\prime_j\}$ is basis of $\cf_{-k}(\ol{H}, \overline{\Lambda}_{\bF_p})$. Suppose $\partial\ol{x}_i=\sum a_j\ol{y}_j$ and $\partial\ol{y}^\prime_j=\sum b_i\ol{x}^\prime_i$. Then, \[\Delta(\partial\ol{x}_i, \ol{y}^\prime_j)=\Delta\big(\sum a_j\ol{y}_j, \ol{y}^\prime_j\big)=a_j.\] Similarly, $\Delta(\ol{x}_i, \partial\overline{y}_j)=b_i$. By definition, $a_j$ is the number of Floer trajectories connecting $\ol{x}_i$ and $\ol{y}_j$ and $b_i$ is the number of the Floer trajectory connecting $\ol{y}^\prime_j$ and $\ol{x}^\prime_i$. Thus $a_j=b_i$, i.e. \[\Delta(\partial\ol{x}_i, \ol{y}_j)=\Delta(\ol{x}_i, \partial\ol{y}_j).\] Since $\Delta$ is bilinear, we have that $\Delta(\partial a, b)=\Delta(a, \partial b)$ for any $a\in \cf_*(H, \overline{\Lambda}_{\bF_p})$ and $b\in \cf_*(\ol{H}, \overline{\Lambda}_{\bF_p})$.
\end{proof}

 The following proposition is interpreted as Poincaré duality in Floer theory. The equality follows from standard arguments in \cite[Corollary 1.4]{usher2010duality} and \cite[Lemma 2.2]{entov2003calabi}.
\begin{prop}\label{CCC}
Let $a\in \qh_*(M, \overline{\Lambda}_{\bF_p})$ be non-trivial, then, \[c(a, H)=-\inf\{c(b, \ol{H}) \,|\, b\in \qh_*(M, \overline{\Lambda}_{\bF_p}),\,\nu(\Delta(\pss_{H}(a), \pss_{\ol{H}}(b)))\leq 0\}.\]
\end{prop}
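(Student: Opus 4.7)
The plan is to prove the equality by establishing the two inequalities $c(a,H) \geq \sup\{-c(b,\ol{H})\}$ and $c(a,H) \leq \sup\{-c(b,\ol{H})\}$, where the supremum is over $b \in \qh_*(M,\ol{\Lambda}_{\bF_{p},univ})$ satisfying $\nu(\Delta(\pss_H(a),\pss_{\ol H}(b))) \leq 0$. The pairing $\Delta$ plays the role of a Poincaré duality on chains, Lemma \ref{AAA} provides the valuation estimate that links filtration levels to valuations of the pairing, and the non-degeneracy of $\Delta$ on chains (descending to homology since $\Delta(\partial \cdot, \cdot) = \pm \Delta(\cdot, \partial \cdot)$) provides the dual element needed in the harder direction.

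For the easy inequality $c(a,H) \geq -c(b,\ol H)$ for each admissible $b$: I would argue by contradiction. Assume $c(a,H) + c(b,\ol H) < 0$ and choose any $\alpha$ with $c(a,H) < \alpha$ and $c(b,\ol H) < -\alpha$. By definition of the spectral invariants there exist cycle representatives $x \in \cf_*(H,\ol{\Lambda}_{\bF_p,univ})^{<\alpha}$ of $\pss_H(a)$ and $y \in \cf_*(\ol H,\ol{\Lambda}_{\bF_p,univ})^{<-\alpha}$ of $\pss_{\ol H}(b)$. Since $\Delta$ descends to homology, $\Delta(\pss_H(a),\pss_{\ol H}(b)) = \Delta(x,y)$, and Lemma \ref{AAA} forces $\nu(\Delta(x,y)) > 0$, contradicting the hypothesis $\nu \leq 0$.

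For the reverse inequality, I would produce, for each $\epsilon > 0$, a class $b$ satisfying the pairing condition with $c(b,\ol H) \leq -c(a,H) + \epsilon$. Set $\alpha = c(a,H)$. Choose a representative cycle $x$ of $\pss_H(a)$ with $\cA(x) < \alpha + \epsilon/2$, and decompose it in an orthogonal basis of $\cf_*(H,\ol{\Lambda}_{\bF_p,univ})$ so that at least one basis element $\ol{x}_i$ with action close to $\alpha$ appears with a coefficient $a_i$ satisfying $\cA_H(\ol x_i) - \nu(a_i) > \alpha - \epsilon/2$, and such that $\ol{x}_i$ cannot be removed from $x$ by adding a boundary of action below $\alpha$ (this is guaranteed by the definition of the spectral invariant through a standard orthogonal basis / singular value argument, as in Usher \cite{usher2008spectral, usher2010duality}). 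Then the dual chain $y = a_i^{-1}\ol{x}_i'$ in $\cf_*(\ol H,\ol{\Lambda}_{\bF_p,univ})$ is a cycle pairing with $x$ to give $\Delta(x,y) = 1$, so $\nu(\Delta(\pss_H(a),[y])) = 0$; moreover $\cA_{\ol H}(y) = -\cA_H(\ol{x}_i) + \nu(a_i) < -\alpha + \epsilon$. Setting $b = \pss_{\ol H}^{-1}([y])$ gives $c(b,\ol H) \leq \cA_{\ol H}(y) < -\alpha + \epsilon$, hence $-c(b,\ol H) > \alpha - \epsilon$. Letting $\epsilon \to 0$ completes the argument.

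The main obstacle is the second direction, where one must extract a dual cycle in $\cf_*(\ol H)$ whose action is controlled by the spectral invariant of $a$. The subtlety is that the spectral invariant measures how far one can slide a representative of $\pss_H(a)$ down the action filtration, so one must identify the "essential" basis element carrying the class at level $c(a,H)$; this requires working with an orthogonal basis adapted to the subcomplex $\cf^{<c(a,H)}$ and using the non-Archimedean filtration axioms from Section \ref{sec: Non-arch_filtration}. The fact that $\Delta$ maps the orthonormal basis $\{T^{\cA(\ol x_i)}\ol x_i\otimes 1\}$ of $\cf_*(H,\ol{\Lambda}_{\bF_p,univ})$ to its dual basis in $\cf_*(\ol H,\ol{\Lambda}_{\bF_p,univ})$, together with the compatibility of $\Delta$ with the differential, is precisely what makes this construction go through, following the pattern of Entov--Polterovich \cite{entov2003calabi} and Usher \cite{usher2010duality}.
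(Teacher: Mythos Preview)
Your overall strategy matches the paper's: split into two inequalities, use Lemma~\ref{AAA} for the easy direction, and construct an explicit dual class for the hard direction. The easy direction is correct and essentially identical to the paper's argument.

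The hard direction has a genuine gap. You write the representative $x$ in an orthogonal basis and then assert that the dual chain $y = a_i^{-1}\ol{x}_i'$ is a cycle in $\cf_*(\ol H,\ol\Lambda_{\bF_p,univ})$. This is false when $\ol x_i$ is a capped orbit, which your notation and your use of $\cA_H(\ol x_i)$ strongly suggest. Concretely, suppose $\cf(H)$ has generators $x_1,x_2$ (degree $0$) and $x_3$ (degree $1$) with $dx_3 = x_2 - x_1$ and $\cA(x_1)<\cA(x_2)$; then $x_1$ is the minimal-action representative of the nontrivial homology class, yet $d'x_1' = -x_3' \neq 0$ in $\cf(\ol H)$. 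So the ``essential'' capped orbit need not have a closed dual.

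The paper repairs exactly this point by passing to a singular value decomposition $\{\xi_1,\dots,\xi_B,\eta_1,\dots,\eta_K,\zeta_1,\dots,\zeta_K\}$ of $\cf(H,\ol\Lambda_{\bF_p,univ})$, taking the dual basis $\{\xi_i^*,\eta_j^*,\zeta_j^*\}$ in the sense of \cite[Proposition~2.20]{usher2016persistent}, transporting it to $\cf(\ol H)$ via the identification $\ol x_i^* = \Delta(-,\ol x_i')$, and then \emph{verifying} that each $\xi_i'$ is a cycle: expanding $d\xi_i' = \sum a_{ij}\xi_j' + \sum b_{ij}\eta_j' + \sum c_{ij}\zeta_j'$ and using the adjointness $\Delta(d\cdot,\cdot)=\pm\Delta(\cdot,d\cdot)$ together with $d\xi_j=d\eta_j=0$ and $d\zeta_j=\eta_j$ forces all coefficients to vanish. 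In the example above one finds $\xi_1' = x_1' + x_2'$, which is indeed closed. Writing the optimal representative as $\til a = \sum a_i\xi_i$ and setting $\til b = a_1^{-1}\xi_1'$ (for the index realizing the maximum) then gives $\Delta(\til a,\til b)=1$ with the required action bound, using the filtration estimate $\cA(\xi_i')\leq -\cA(\xi_i)$ coming from Usher's dual-basis proposition. Your final paragraph gestures at ``compatibility of $\Delta$ with the differential'' but never deploys it; that compatibility is precisely what is needed, and only after passing to the SVD basis rather than the capped-orbit basis.
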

\begin{proof}
First we show that 
\begin{align*}
-c(a, H)\ge\inf\{c(b, \ol{H})\mid b\in \qh(M, \overline{\Lambda_{\mathbb{K}}}), \nu(\Delta(\text{PSS}_H(a), \text{PSS}_{\ol{H}}(b)))\le 0\}.
\end{align*}
Suppose that $\alpha< c(a, H)$. We have a short exact sequence of chain complexes

\begin{equation*}
	   0\ra\cf_*(H, \overline{\Lambda}_{\bF_p})^{<\alpha}\ra\cf_*(H, \overline{\Lambda}_{\bF_p})\ra\cf_*(H, \overline{\Lambda}_{\bF_p})^{[\alpha, \infty)}\ra0,
\end{equation*}
inducing an exact sequence
\begin{equation*}
    \hf_*(H, \overline{\Lambda}_{\bF_p})^{<\alpha}\xrightarrow{i_\alpha} \hf_*(H, \overline{\Lambda}_{\bF_p})\xrightarrow{\pi_\alpha}\hf_*(H, \overline{\Lambda}_{\bF_p})^{[\alpha, \infty)}.
\end{equation*}

The fact that $\alpha<c(a, H)$ means that $\text{PSS}_H(a)$ is not represented by any chains of filtration level at most $\alpha$, so that $\text{PSS}_H(a)\notin \text{im}(i_\alpha)$. Thus $\pi_\alpha(\text{PSS}_H(a))\ne 0$. We now prove the following auxilary claim. 

\begin{claim}
    Fix a representative $\tilde{a}$ of $\text{PSS}_H(a)$. There is $b\in \qh(M, \overline{\Lambda}_{\bF_p})$ such that 
\begin{align*}
\nu(\Delta(\text{PSS}_H(a), \text{PSS}_{\ol{H}}(b)))\le 0
\end{align*}
and 
\begin{align*}
\mathcal{A}(\tilde{b})= -\mathcal{A}(\tilde{a})\le -\alpha
\end{align*}
where $\tilde{b}$ is a representative of $\text{PSS}_{\ol{H}}(b)$ respectively.
\end{claim}
\begin{proof}
Consider the dual vector space $\cf(H,\overline{\Lambda}_{\bF_p})^*$ of $\cf(H,\overline{\Lambda}_{\bF_p})$. Let $$\{\xi_1, \cdots, \xi_B, \eta_1, \cdots, \eta_K, \zeta_1,\cdots, \zeta_K\}$$ be a singular value decomposition for the complex $\cf(H,\overline{\Lambda}_{\bF_p})$. We recall from \cite[Proposition 2.20]{usher2016persistent} that there is an $\mathcal{A}^*$-orthogonal dual basis $$\{\xi_1^*, \cdots, \xi_B^*, \eta_1^*, \cdots, \eta_K^*, \zeta_1^*,\cdots, \zeta_K^*\}$$ of $\cf(H,\overline{\Lambda}_{\bF_p})^*$ such that $\mathcal{A}^*(\xi_i^*)=-\mathcal{A}(\xi_i)$, $\mathcal{A}^*(\eta_i^*)=-\mathcal{A}(\eta_i)$, and $\mathcal{A}^*(\zeta_i^*)=-\mathcal{A}(\zeta_i)$, where 
    \begin{align*}
        \mathcal{A}^*(f^*)=\sup\{-\mathcal{A}(\theta)-\nu(f^*(\theta))\mid 0\ne\theta\in\cf(H,\overline{\Lambda}_{\bF_p})\}.
    \end{align*}
If $f=\sum a_i\ol{x}_i$ is an element in $\cf(H,\overline{\Lambda}_{\bF_p})$, then we denote by ${f}'$ the element $\sum a_i\ol{x}'_i$ in $\cf(\ol{H},\overline{\Lambda}_{\bF_p})$ and $f^*$ the dual element of $f$ in $\cf(H,\overline{\Lambda}_{\bF_p})^*$. Then 
    \begin{align*}
        \mathcal{A}({f'}) &=\max\{\mathcal{A}_{\ol{H}}(\ol{x}'_i)-\nu(a_i)\}\\
        &=\max\{-\mathcal{A}_H(\ol{x}_i)-\nu(a_i)\}\\
        &=\max\{-\mathcal{A}_H(\ol{x}_i)-\nu(f^*(\ol{x}_i))\}\\
        &\le \mathcal{A}^*(f^*).
    \end{align*}
Next we check $d{\xi}'_i=0$. Assume
    \begin{align*}
        d{\xi}'_i=\sum a_{ij}{\xi}'_j+\sum b_{ij}{\eta}'_j+\sum c_{ij}{\zeta}'_j
    \end{align*}
    Then $a_{ij}=\Delta(\xi_j, d{\xi}'_i)=\Delta(d\xi_j, {\xi}'_i)=\Delta(0, {\xi}'_i)=0$. Similarly, $b_{ij}=c_{ij}=0$. Thus $d{\xi}'_i=0$.

    Assume $\tilde{a}=\sum a_i\xi_i$. Then $\mathcal{A}(\tilde{a})=\max\{\mathcal{A}(\xi_i)-\nu(a_i)\}=\mathcal{A}(\xi_k)-\nu(a_k)$ for some $k$. Define $\tilde{b}:=a_k^{-1}{\xi}'_k$. Then
    \begin{align*}
        \Delta(\tilde{a}, \tilde{b})=a_ka_k^{-1}=1
    \end{align*}
    We have $\nu(\Delta(\tilde{a}, \tilde{b}))=0$. 
    It also means that $[\tilde{b}]\ne 0$ in $\hf(\ol{H}, \overline{\Lambda}_{\bF_p})$. In addition, $\mathcal{A}(\tilde{b})=\mathcal{A}({\xi}'_k)-\nu(a_k^{-1})\le\mathcal{A}^*(\xi_k^*)+\nu(a_k)=-\mathcal{A}(\xi_k)+\nu(a_k)=-\mathcal{A}(\tilde{a})$. Thus we can take $b:=\text{PSS}_{\ol{H}}^{-1}([\tilde{b}])$
\end{proof}
Let $\alpha=c(a, H)-\epsilon$. Then 
\begin{align*}
    \mathcal{A}(\tilde{b})\le -c(a, H)+\epsilon
\end{align*}
Take $\epsilon\to 0$. Then 
\begin{align*}
    \mathcal{A}(\tilde{b})\le -c(a, H)
\end{align*}
We have
\begin{align*}
    \inf\{c(b, \ol{H})\mid b\in \qh(M, \overline{\Lambda}_{\bF_p}), \nu(\Delta(\text{PSS}_H(a), \text{PSS}_{\ol{H}}(b)))\le 0\}
    &\le \mathcal{A}(\tilde{b})\\
    &\le -c(a, H)
\end{align*}

Next, we show that 
\begin{align*}
-c(a, H)\le \inf\{c(b, \ol{H})\mid b\in \qh(M, \overline{\Lambda}_{\bF_p}), \nu(\Delta(\text{PSS}_H(a), \text{PSS}_{\ol{H}}(b)))\le 0 \}. 
\end{align*}
Suppose that $\alpha>c(a, H)$. Thus, there must be some cycle $c\in \cf(\phi, \overline{\Lambda}_{\bF_p})^{<\alpha}$ representing the class $\text{PSS}_H(a)$. If $b\in \qh(M, \overline{\Lambda}_{\bF_p})$ is an arbitrary class satisfying $\nu(\Delta(\text{PSS}_H(a), \text{PSS}_{\ol{H}}(b)))\le 0$, then by the definition of $\Delta$ it must hold that every representative $d\in \cf(\ol{H}, \overline{\Lambda}_{\bF_p})$ of the class $\text{PSS}_{\ol{H}}(b)$ satisfies $\nu(\Delta(c, d))\le 0$. By Lemma \ref{AAA}, this can only be true if no representative $d$ of $b$ belongs to $\cf(\ol{H}, \overline{\Lambda}_{\bF_p})^{-\alpha}$, which amounts to the statement that $c(b, \ol{H})\ge-\alpha$. Note that $b$ was an arbitrary class with $\nu(\Delta(\text{PSS}_H(a), \text{PSS}_{\ol{H}}(b)))\le 0$, while $\alpha$ was an arbitrary number exceeding $c(a, H)$, therefore, we obtain that 
\begin{align*}
-c(a, H)\le \inf\{c(b, \ol{H})\mid b\in \qh(M, \overline{\Lambda}_{\bF_p}), \nu(\Delta(\text{PSS}_H(a), \text{PSS}_{\ol{H}}(b)))\le 0 \}. 
\end{align*}
\end{proof}

On the other hand, we can define a pairing on the even quantum homology:
\begin{align*}
	\tilde{\Delta}:\qh_{ev}(M,\ol{\Lambda}_{\bF_p})\times\qh_{ev}(M,\ol{\Lambda}_{\bF_p})&\ra\ol{\Lambda}_{\bF_p}\\
	\Bigg(\sum_{i=0}^{2n}a_ih_{2i} , \sum_{j=0}^{2n}b_jh_{2j}\Bigg)&\mapsto\sum_{i+j=n}a_ib_j(h_{2i}\circ h_{2j})
\end{align*}
where $a_i,b_i\in\ol{\Lambda}_{\bF_p}$ and $h_{2i}\in\h_{2i}(M,\bF_p)$ for $i=0,\dots,n$. The following result is from \cite[Section 2.6.8]{entov2003calabi} and \cite[Proposition 7.7]{usher2023abstractinterlevelpersistencemorsenovikov}.
\begin{lem}
\label{lem:Pair_QH}
Let $a,b\in\qh_{ev}(M,\ol{\Lambda}_{\bF_p})$. Then, 
\begin{equation}
\label{eq:PD_QH}
\Delta(\pss_{H}(a),\pss_{\ol{H}}(b))=\tilde{\Delta}(a,b).
\end{equation}
\end{lem}

As in \cite[Section 2.3]{entov2003calabi}, we have the following lemma:

\begin{lem}
\label{Pair_QH2}
The pairing satisfies $\tilde{\Delta}(a,b)=\tilde{\Delta}(a*b,[M])$. In particular, $$\Delta(\pss_H(a),\pss_{\ol{H}}(b))=\Delta(\pss_H(a*b),\pss_{\ol{H}}([M])).$$
\end{lem}

\newpage
\subsection{Floer persistence}\label{sec: persistence}
\subsubsection{Overview of persistence modules}
The theory of persistence modules has origins in the field of topological data analysis. It was introduced by Carlsson and Zomordian \cite{zomorodian2004computing} as an algebraic tool whose purpose was to deal with persistence homology invented by Edelsbrunner, Letscher and Zomordian \cite{edelsbrunner2000topological} to study topological aspects of large data sets.  Persistence modules have since then proven useful in many disciplines of pure mathematics such as metric geometry and calculus of variations. Polterovich and Shelukhin \cite{polterovich2016autonomous} (see also \cite{polterovich2017persistence, usher2016persistent}) were the first to view filtered Floer homology as a persistence module in order to prove interesting results about autonomous Hamiltonian diffeomorphisms. Since then, this viewpoint has led to several applications such as Shelukhin's proof \cite{shelukhin2022hofer} of the Hofer-Zehnder conjecture in the monotone setting (under the semisimplicity assumption) and to obstructions to the existence of non-trivial finite subgroups of $\Ham(M,\om)$ \cite{atallah2020hamiltonian}. However, in order to view filtered Floer homology as a persistence module a certain finiteness assumption is required. In practice this means that going beyond the monotone setting requires some work. In \cite{usher2016persistent}, Usher and Zhang generalized the notion of a \textit{barcode} (the main invariant obtained from a persistence module) to the semipositive setting.

\subsubsection{Persistence modules} 
\label{sec:persistence}
In this section we follow \cite[Section 4.4.1]{shelukhin2022hofer} in order define persistence modules and their associated barcodes and discuss the relation between them.

Let $\bK$ be a field. Denote by $\mrm{Vect}_{\bK}$ the category of finite dimensional $\bK$-vector spaces and by $(\R,\leq)$ the poset category of $\R$. A \textit{persistence module} over $\bK$ is a functor \[V:(\R,\leq)\ra\mrm{Vect}_{\bK}.\] The collection of such functors together with their natural transformations form an abelian category $\mrm{Fun}((\R,\leq),\mrm{Vect}_{\bK})$. We consider a full abelian subcategory 
\begin{equation*}
	{\mathbf{pmod}}\subset\mrm{Fun}((\R,\leq),\mrm{Vect}_{\bK}),
\end{equation*}
which is defined by requiring that certain technical assumptions are satisfied. The following definition summarizes the data of such a persistence module.
\begin{definition}
A persistence module $V$ in $\mathbf{pmod}$ consists of a family \[\{V^{a}\in\mrm{Vect}_{\bK}\}_{a\in\R}\] of vector spaces and $\bK$-linear maps $\pi_{V}^{a,b}:V^a\ra V^b$ for each $a\leq b$ such that $\pi_{V}^{a,a}=\id_{V^a}$, and $\pi_{V}^{b,c}\circ\pi_{V}^{a,b}=\pi_{V}^{a,c}$ for all $a\leq b\leq c$. Furthermore, we require them to satisfy the following:
	\begin{enumerate}
	\item{\textit{Support:}} $V^a=0$ for all $a\ll0$.
	\item{\textit{Finiteness:}} there exists a finite subset $S\subset\R$ such that for all $a,b$ in the same connected components of $\R\setminus S$, the map $\pi_{V}^{a,b}$ is an isomorphism. 
	\item{\textit{Continuity:}} for every two consecutive elements $s<s'$ of $S$, and any $a\in (s,s')$, the map $\pi^{a,s'}_{V}$ is an isomorphism.
	\end{enumerate}
We define $V^\infty=\lim_{a\ra\infty}V^a$.
\end{definition}
The normal form theorem \cite{zomorodian2004computing, crawley2015decomposition} implies that the isomorphism classes of a persistence module $V\in\mathbf{pmod}$ is determined by its \textit{barcode}, that is, a multiset $\cB(V)=\{(I_{k}, m_{k})\}_{1\leq k\leq N}$ of intervals $I_k\subset\R$ with multiplicities $m_{k}\in\Z_{>0}$. The intervals are of two types, $K=K(V)$ of them are finite, $I_{k}=(a_{k},b_{k}]$, and $B=B(V)=N-K$ are infinite, $I_k=(a_k,\infty)$. The intervals are called \textit{bars} and the \textit{bar-lengths} are defined as $|I_k|=b_k-a_k$ in the finite case, and $|I_k|=+\infty$ otherwise.

The isometry theorem \cite{chazal2016structure, bauer2015induced, chazal2009proximity, cohen2005stability} shows that the barcode assignment map
\begin{align*}
	\cB:(\mathbf{pmod},d_{inter})&\ra (\mathbf{barcodes}, d_{bottle})\\
		V&\mapsto\cB(V)
\end{align*}
is an isometry. The \textit{interleaving distance} is defined on $\mrm{Fun}((\R,\leq),\mrm{Vect}_{\bK})$ by setting
\begin{equation*}
	d_{inter}(V,W)=\inf\{\delta\geq0\,|\,\exists\,\delta\text{-interleaving},\, f\in\mrm{hom}(V,W[\delta]), g\in\mrm{hom}(W,V[\delta])\},
\end{equation*}
where for $V\in\mrm{Fun}((\R,\leq),\mrm{Vect}_{\bK})$ and $c\in\R$, $V[c]\in\mrm{Fun}((\R,\leq),\mrm{Vect}_{\bK})$ is given by pre-composing with the functor $T_{c}:(\R,\leq)\ra(\R,\leq)$, $t\mapsto t+c$. We say that a pair $f\in\mrm{hom}(V,W[c]), g\in\mrm{hom}(W,V[c])$ is a $c$-interleaving if \[g[c]\circ f=\mrm{sh}_{2\delta,V},\quad f[c]\circ g=\mrm{sh}_{2\delta, W},\] where for $c\geq0$, $\mrm{sh}_{c,V}\in\mrm{hom}(V,V[c])$ is the natural transformation $\id_{(\R,\leq)}\ra T_{c}$. Note that, $d_{inter}(V,W)\in\R_{\geq0}\cup\{\infty\}$, and it is finite if and only if $V^{\infty}\cong W^{\infty}$.

The \textit{bottleneck distance} is defined as
\begin{equation*}
	d_{bottle}(\cB,\cC)=\inf\{\delta>0\,|\,\exists\,\delta\text{-matching between }\cB,\cC\},
\end{equation*}
where a $\delta$-\textit{matching} between $\cB,\cC$ is defined as bijection $\sigma:\cB^{2\delta}\ra\cC^{2\delta}$ between the sub-multisets $\cB^{2\delta}\subset\cB$, $\cC^{2\delta}\subset\cC$, which contain the bars of $\cB,\cC$, respectively, with bar-length greater than $2\delta$, such that if $\sigma((a,b])=(c,d]$, then $\max\{|a-c|,|b-d|\}\leq\delta$. We have that $d_{bottle}(\cB,\cC)\in\R_{\geq0}\cup\{\infty\}$, with it being finite if and only if $B(\cB)=B(\cC)$.

Note that for each $c\in\R$ there is an isometry given by sending a barcode $\cB=\{(I_k,m_k)\}$ to $\cB[c]=\{(I_k-c, m_k)\}$. We can therefore consider the quotient space $(\mathbf{barcodes'},{d}_{bottle}')$ by this isometric $\R$-action, where
\begin{equation*}
	d_{bottle}'([\cB],[\cC])=\inf_{c\in\R}d_{bottle}(\cB,\cC[c]).
\end{equation*}
We observe that bar-lengths are well-defined in the quotient. 

\subsubsection{Boundary depth of persistence modules with no finiteness conditions}
In this section we show that to $V\in\mrm{Fun}((\R,\leq),\mrm{Vect}_{\bK})$ one associates a real value $\beta(V)$, called \textit{boundary depth}, which is controlled by the interleaving distance in a precise sense. 

\begin{definition}
\label{def:boundary_depth}
Let $V\in\mrm{Fun}((\R,\leq),\mrm{Vect}_{\bK})$. The \textit{boundary depth} $\beta({V})$ of ${V}$ is defined as the infimum of all $\lambda\in(0, \infty)$ with the property that, for all $s\in\mathbb{R}$,
\begin{align*}
    \ker\left(V^s\to\lim_{\longrightarrow}V^t\right)=\ker\left(V^s\to V^{s+\lambda}\right)
\end{align*}
Note the set on the left hand side is the same as the ascending union $$\displaystyle\bigcup_{t\in[s, \infty)}\ker(V^s\to V^t).$$ Depending on $V$, there may be no $\lambda$ with this property, in which case $\beta({V})=\infty$. 
\end{definition}
A straightforward verification yields the following:
\begin{prop}
\label{prop:sum_boundary}
Suppose $V_1,\dots,V_m\in\mrm{Fun}((\R,\leq),\mrm{Vect}_{\bK})$ then $$\beta(\bigoplus_{j=1}^{m} V_j)=\max_{1\leq j\leq m}\beta(V_j).$$
\end{prop}

Furthermore, we have the following stability result:
\begin{prop}
\label{prop:boundary-depth_stability}
Suppose $V,W\in\mrm{Fun}((\R,\leq),\mrm{Vect}_{\bK})$ satisfy $d_{inter}(V,W)\leq\delta$ and $\beta(V)<\infty$, then $|\beta(V)-\beta(W)|\leq2\delta.$
\end{prop}
\begin{proof}
Suppose the interleaving is realized by $f:V\ra W[\delta]$ and $g:W\ra V[\delta]$. Let $\beta>\beta(V)$ be arbitrary and take $$x\in\ker\left(W^s\to\displaystyle\lim_{\longrightarrow}W^t\right).$$ Thus, there exists $\lambda$ such that $x\in\ker\left(W^s\to W^{s+\lambda}\right)$. Note that since $g$ is a morphism, we have that $$g(x)\in\ker\left(V^{s+\delta}\to V^{s+\delta+\lambda}\right)\subset\ker\left(V^{s+\delta}\to V^{s+\delta+\beta}\right).$$ The last inclusion follows from the fact that $\beta>\beta(V)$. Furthermore, since $f$ is a morphism and $f\circ g=\mrm{sh}_{2\delta, W}$, we have that $$\mrm{sh}_{2\delta, W}(x)=(f\circ g)(x)\in\ker\left(W^{s+2\delta}\to W^{s+2\delta+\beta}\right),$$ and, hence, $x\in\ker\left(W^{s}\to W^{s+2\delta+\beta}\right)$. Thus, $\beta(W)\leq \beta+2\delta$. Since $\beta$ was arbitraty we have that $\beta(W)\leq \beta(V)+2\delta$, in particular, $\beta(W)<\infty$. We conclude the proof by observing that once we know $\beta(W)$ is finite we can run the same argument to obtain $\beta(V)\leq \beta(W)+2\delta$.
\end{proof}

\subsubsection{Bar-length spectrum of Hamiltonian diffeomorphisms}
\label{sec:SVD}
In the symplectically aspherical and monotone settings, filtered Hamiltonian Floer homology of a non-degenerate Hamiltonian function $H$, together with the maps \[\hf_k(H)^{<a}\ra\hf_k(H)^{<b},\] for $a<b$, induced by the natural inclusions, were studied in \cite{polterovich2016autonomous, polterovich2017persistence, shelukhin2022hofer} from the viewpoint of persistence modules. In particular, one can associate a barcode to each Hamiltonian diffeomorphism with finitely many fixed points, and, hence, a bar-length spectrum. We refer to \cite{polterovich2016autonomous, polterovich2017persistence, usher2016persistent} for details of the construction and for first properties. Following the discussion in \cite[Section 4.4]{shelukhin2022hofer}, we next describe two equivalent descriptions of the bar-length spectrum in the semipositive setting; these are the relevant notions for what is done in this article. We remark that while the notion of a barcode described in Section \ref{sec:persistence} does not extend to the semipositive setting, all three descriptions of bar-length spectrum coincide in the monotone setting \cite[Lemma 9]{shelukhin2022hofer}.

When $(M,\om)$ is semipositive, consider the filtered Floer chain complex $(\cf(H;\Lambda), d, \cA)$, where $\Lambda$ is one of the following $\Lambda_{\bK}, \ol\Lambda_{\bK}$, the non-Archimidean filtration $\cA$ is as in Section \ref{sec: Non-arch_filtration}, and $d$ is the Floer differential. Then, by \cite{usher2016persistent}, when $H$ is non-degenerate, the complex $(\cf(H;\Lambda), d)$ admits an orthogonal basis
\begin{equation*}
	E=(\xi_1,\dots,\xi_B,\eta_1,\dots,\eta_K,\zeta_1,\dots,\zeta_K)
\end{equation*}
such that $d\xi_j=0$ for all $j\in\{1,\dots,B\}$, and $d\zeta_j=\eta_j$ for all $j\in\{1,\dots,K\}$. The lengths of the finite bars are given by \[\beta_j=\beta_j(\phi_H,\Lambda)=\cA(\zeta_j)-\cA(\eta_j),\] which we can assume to satisfy $\beta_1\leq\dots\leq\beta_K$. We call this ordered set of positive real numbers by the \textit{bar-length spectrum} of $\phi_H$. The length of the largest finite bar is the \textit{boundary-depth} introduced by Usher \cite{usher2011boundary, usher2013hofer}, and denoted by $\beta(\phi_H,\Lambda)$. There are $B$ infinite bar-lengths corresponding to each $\xi_j$. This description yields the identity $N=B+2K$, where $N, B$, and $K$ can be computed by $N=\dim_{\Lambda}\cf(H,\Lambda), B=\dim_{\Lambda}\hf(H,\Lambda)$, and $K=\dim_{\Lambda}\im(d)$. We denote by \[\beta_{tot}(\phi,\Lambda)=\beta_{1}(\phi,\Lambda)+\cdots+\beta_{K(\phi,\Lambda)}(\phi,\Lambda)\] the \textit{total bar-length} associated to the barcode. As a side comment which will be useful later, we note that for $V\in\mrm{Fun}((\R,\leq),\mrm{Vect}_{\Lambda})$, given by $V^s=\hf(H,\Lambda)^{<s}$, we have that $\beta(\phi_H,\Lambda)=\beta(V)$ as in Definition \ref{def:boundary_depth}.
 
Following \cite{fukaya2013displacement}, the Floer differential $d$ in the orthonormal basis described in Section \ref{sec: Non-arch_filtration} has coefficients in $\Lambda^0$. Therefore, one defines a Floer complex $(\cf(H,\Lambda^0),d)$ whose homology is a finitely generated $\Lambda^0$-module, and is therefore of the form $\cF\oplus\mathcal{T}$, where $\cF$ is a free $\Lambda^0$-module and $\mathcal{T}$ is a torsion $\Lambda^0$-module. The bar-lengths are given by the isomorphism
\begin{equation*}
	\mathcal{T}\cong\bigoplus_{1\leq j\leq K}\Lambda^{0}/(T^{\beta_j}).
\end{equation*}
We emphasize that $\beta_1\leq\cdots\leq\beta_K$ coincides with the bar-length spectrum of $\phi_H$. 

Combining the ideas in the proof of \cite[Lemma 16]{shelukhin2022hofer} and Proposition \ref{sec: reduction} we show that the bar-length spectrum over $\bF_p$ coincides with that over $\Q$ for a sufficiently large prime $p$.

\begin{remark}
We note that the bar-length spectrum is Hofer continuous since the quasiequivalences coming from Floer continuation maps are controlled by the Hofer norm; see \cite[Section 8]{usher2016persistent} for details. Therefore, the bar-length spectrum and, in particular, the boundary-depth can be extended to all Hamiltonian diffeomorphisms. Consequently, to bound the boundary-depth from above for all Hamiltonian diffeomorphisms, as in Theorem \ref{thm:upper_bound}, it is enough to do so for non-degenerate ones. 
\end{remark}

\begin{lem}
\label{lem: barcode_coeff}
Let $\phi$ be a non-degenerate Hamiltonian diffeormorphism. Then, for a sufficiently large prime $p$, the bar-length spectrum \[0<\beta_{1}(\phi,\Lambda_{\bF_p})\leq\cdots\leq\beta_{K(\phi,\Lambda_{\bF_p})}(\phi,\Lambda_{\bF_p})\] over $\bF_p$ coincides with the bar-length spectrum \[0<\beta_{1}(\phi,\Lambda_{\Q})\leq\cdots\leq\beta_{K(\phi,\Lambda_{\Q})}(\phi,\Lambda_{\Q})\] over $\Q$. In particular $\beta(\phi,\Lambda_{\bF_p})=\beta(\phi,\Lambda_{\Q})$ for sufficiently large $p$. 
\end{lem}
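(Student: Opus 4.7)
The plan is to combine the singular value decomposition of Usher--Zhang with an integral model for the Floer complex, mirroring the argument of Lemma 16 in \cite{shelukhin2022hofer} but invoking Proposition \ref{sec: reduction} in place of its monotone analogue. First, we arrange Floer data (coherent perturbations, integer virtual counts) so that the complex $\cf(H; \Lambda_{\Z, univ})$ has a $\Lambda_{\Z,univ}$-basis consisting of capped $1$-periodic orbits in which the differential $d_F$ has matrix entries that are Novikov series with $\Z$-coefficients. Under the base changes $\Z \to \bF_p$ and $\Z \to \Q$, the resulting complexes are canonically identified with $\cf(H; \Lambda_{\bF_p, univ})$ and $\cf(H; \Lambda_{\Q, univ})$ respectively, and since the basis of capped orbits is common to all three, the non-Archimedean filtration $\cA$ defined in Section \ref{sec: Non-arch_filtration} descends unchanged.

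Next, apply the Usher--Zhang SVD to $(\cf(H; \Lambda_{\Q, univ}), d_F, \cA)$ to obtain an orthogonal basis
\[
E_\Q = (\xi_1, \ldots, \xi_B, \eta_1, \ldots, \eta_K, \zeta_1, \ldots, \zeta_K)
\]
satisfying $d_F \xi_i = 0$ and $d_F \zeta_j = \eta_j$, with finite bar-lengths $\beta_j(\phi, \Q) = \cA(\zeta_j) - \cA(\eta_j)$. The change-of-basis matrix $A \in \mathrm{GL}(N, \Lambda_{\Q, univ})$ relating $E_\Q$ to the standard integral basis involves only finitely many rational Novikov series, and hence only finitely many denominators. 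Let $S$ be the finite set of primes dividing any of those denominators or dividing the leading coefficient of $\det(A)$ required to preserve its invertibility.

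Then, for every prime $p \notin S$, Proposition \ref{sec: reduction} ensures that the reduction of $E_\Q$ modulo $p$ yields a well-defined orthogonal basis of $\cf(H; \Lambda_{\bF_p, univ})$; invertibility of the change-of-basis matrix and the defining relations $d_F \zeta_j = \eta_j$ descend intact. Consequently this reduced basis is an SVD of $\cf(H; \Lambda_{\bF_p, univ})$ with the same filtration levels $\cA(\zeta_j), \cA(\eta_j)$, and by the uniqueness of the bar-length spectrum one obtains $K(\phi, \bF_p) = K(\phi, \Q)$ together with $\beta_j(\phi, \bF_p) = \beta_j(\phi, \Q)$ for each $j$. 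In particular $\beta(\phi, \bF_p) = \beta(\phi, \Q)$ for all $p \notin S$, which is the statement of the lemma.

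The main obstacle is establishing that an integral Floer complex with enough rigidity for the SVD argument is available in the semipositive setting; this is precisely the content of Proposition \ref{sec: reduction}, which provides the compatibility of the integral model with scalar extensions to $\bF_p$ and to $\Q$. Once this is in hand, the proof is essentially linear algebra: the Usher--Zhang decomposition is determined by finitely many data, so it can only fail to descend at the finitely many primes in $S$.
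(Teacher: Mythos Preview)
Your overall strategy matches the paper's: construct a singular value decomposition over a characteristic-zero Novikov field starting from the integral Floer complex, then reduce the change-of-basis matrix modulo $p$ for all but finitely many primes. The paper carries this out almost verbatim.

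There is, however, a genuine gap in one step. You take the SVD over $\Lambda_{\Q,univ}$ and assert that the change-of-basis matrix $A\in\mathrm{GL}(N,\Lambda_{\Q,univ})$ ``involves only finitely many rational Novikov series, and hence only finitely many denominators.'' The ``hence'' is false: a single element of $\Lambda_{\Q,univ}$ can have infinitely many primes in its denominators (e.g.\ $\sum_{l\geq 0}\frac{1}{l!}T^{l}$, exactly as noted in Remark~\ref{rmk: reduction}), so finiteness of the number of matrix entries says nothing about the primes involved. Proposition~\ref{prop: field_of_frac} does \emph{not} apply to arbitrary elements of $\Lambda_{\Q,univ}$; it is a statement about the strictly smaller field $Q(\Lambda_{\Z,univ})$.

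The fix, which is precisely what the paper does, is to perform the Usher--Zhang SVD over the field $Q(\Lambda_{\Z,univ})$ rather than $\Lambda_{\Q,univ}$. Since the Floer differential has entries in $\Lambda_{\Z,univ}\subset Q(\Lambda_{\Z,univ})$ and the SVD is an algorithm over the ambient field, the resulting change-of-basis matrix has entries in $Q(\Lambda_{\Z,univ})$. Now Proposition~\ref{prop: field_of_frac} applies to each of the finitely many entries and to $\det A$, yielding the finite set of bad primes you want. After that, your reduction argument and the comparison with $\Lambda_{\Q,univ}$ via tensoring go through as you wrote. Your final paragraph also misstates the content of Proposition~\ref{prop: field_of_frac}: it is not about the existence of an integral Floer model (that comes from standard semipositive transversality), but solely about controlling denominators in $Q(\Lambda_{\Z,univ})$.
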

\begin{proof}
Let $\{\xi_1,\dots,\xi_B,\eta_1,\dots,\eta_K,\zeta_1,\dots,\zeta_K\}$ be an orthonormal singular value decomposition of $\cf(H,Q(\Lambda_{\Z}))$ satisfying $d\xi_i=0$ for all $i\in\{0,\dots,B\}$ and $d\zeta_j=T^{\beta_j}\eta_j$ for all $j\in\{1,\dots,K\}$, where $\beta_j$ is the $j$-th bar-length in the spectrum. Note that there is a canonical orthonormal basis $\{\ol{x}_1,\dots,\ol{x}_N\}$ where, $\ol{x}_i=T^\cA(x_i)$ for all $i$, and $\{x_i\}_{i=1}^{N}$ is the collection of contractible fixed points of $\phi$. Recall from the discussion in Section \ref{sec: Non-arch_filtration} that these two orthonormal basis will be related by an matrix $Q\in\mrm{GL}(N,Q(\Lambda_{\Z}))$ whose coefficients have nonnegative valuation, in particular, its filtration-preserving. Since $Q$ has only finitely many coefficients, Proposition \ref{sec: reduction} implies that for a sufficiently large prime $p$, it is possible to reduce $Q$ to a matrix $[Q]_p\in\mrm{GL}(N,\Lambda_{\bF_p})$, i.e. $[\det Q]_p\neq0$. We can then obtain a singular value decomposition $\{[\xi_1]_p,\dots,[\xi_B]_p,[\eta_1]_p,\dots,[\eta_K]_p,[\zeta_1]_p,\dots,[\zeta_K]_p\}$ of $\cf(H,\Lambda_{\bF_p})$, satisfying the same relations as before, by applying $[Q]_p$ to the canonical orthonormal basis given by the contractible fixed points of $\phi$. In particular, it will have the same bar-length spectrum. On the other hand, $\{\xi_1\otimes1,\dots,\xi_B\otimes1,\eta_1\otimes1,\dots,\eta_K\otimes1,\zeta_1\otimes1,\dots,\zeta_K\otimes1\}$ is an orthogonal singular value decomposition of $\cf(H,\Lambda_{\Q})$ with the same bar-length spectrum.
\end{proof}

\subsection{Local Floer homology}\label{sec: LFH}
The definition of local Floer homology and its properties can be found in \cite{ginzburg2010local}. We include them in this section for the convenience of the readers.

Let $x$ be an isolated fixed point of a Hamiltonian diffeomorphism $\phi$ and $\{\phi_{t}\}$ be a Hamiltonian isotopy with $\phi_{1}=\phi$. Then $x(t)=\phi_{t}(x)$ is an $1$-periodic orbit. Denote by $\tilde{x}: S^1\to S^1\times M$ the graph of $x$. Take $\tilde{U}$ to be a small enough neighborhood of $\tilde{x}$ and put $U=p_M(\tilde{U})$, where $p_M: S^1\times M\to M$ is the projection. When $x$ is a degenerate fixed point, we can take a sufficiently small non-degenerate perturbation $\phi'$ of $\phi$ with support in $U$ such that the Floer trajectories with sufficiently small energy connecting the $1$-periodic orbits of $\phi'$ in $U$ are contained in $U$. Thus, every broken trajectory is also contained in $U$. Denote by $\cf(\phi', x)$ the $\bK$-vector space generated by the $1$-periodic orbits of $\phi'$ in $U$. Then, we can define the Floer homology in $U$, which is independent of the choice of the perturbation and of the almost complex structure. We call this Floer homology in $U$ the \textit{local Floer homology at $x$} and denote it by $\hf^{loc}(\phi, x)$. By the definition, one can easily see that $\hf^{loc}(\phi, x)\cong\mathbb{K}$ whenever $x$ is a non-degenerate fixed point. Furthermore, if $(x, \ol{x}_1)$ and $(x, \ol{x}_2)$ are two distinct cappings of a $1$-periodic orbit $x$. Then, $\cz((x, \ol{x}_1))=\cz((x, \ol{x}_2))\mod 2$. Thus, there is a $\mathbb{Z}/2$-grading on $\hf^{loc}(\phi, x)$. When $x$ is non-degenerate, $\hf^{loc}(\phi, x)\otimes_{\mathbb{K}}\Lambda_{\mathbb{K}}$ contributes a copy of $\Lambda_{\mathbb{K}}$ to the Floer complex $\cf_k(\phi, \Lambda_{\mathbb{K}})$. Following the arguments in \cite{ginzburg2019conley, ginzburg2014hyperbolic, mclean2012local, shelukhin2021mathbb, sugimoto2021hofer, shelukhin_wilkins}, for any two distinct capped periodic orbits $\ol{x}$ and $\ol{y}$ of $\phi$, there exists a crossing energy $2\epsilon_0>0$ such that all Floer trajectories, or product structures with $\ol{x}$ and $\ol{y}$ among their asymptotes carry energy at least $2\epsilon_0$.

\begin{definition}\label{def: admissible}
An iteration $\phi^k$ of $\phi$ is \textit{admissible at a fixed point} $x$ \textit{of} $\phi$ if $\lambda^k\ne 1$ for all eigenvalues $\lambda\ne 1$ of $d\phi_x$.
\end{definition}
For example, when none of $\lambda\ne 1$ are roots of unity, $\phi^k$ is admissible for $k>0$. Otherwise, $\phi^{p^n}$ is admissible for sufficiently large $p$ and $n>0$. By \cite[Theorem 1.1, Remark 1.2 ]{ginzburg2010local}, we have the following theorem.
\begin{thm}\label{A}
Let $\phi^k$ be an admissible iteration of $\phi$ at an isolated fixed point $x$. Then, the $k$-iteration $x^{(k)}$ of $x$ is an isolated fixed point of $\phi^k$ and \[\hf^{loc}(\phi^k, x^k)\cong \hf^{loc}(\phi, x).\]
\end{thm}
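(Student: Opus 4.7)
The plan is to prove the statement in two steps: first, that $x^{(k)}$ is isolated as a fixed point of $\phi^k$, and second, that the two local Floer homologies are naturally isomorphic.

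For isolation, I would work in a Darboux chart centered at $x$. The admissibility condition $\lambda^k \neq 1$ for every eigenvalue $\lambda \neq 1$ of $d\phi_x$ implies that $d\phi_x - I$ and $d(\phi^k)_x - I$ have the same generalized kernel. Splitting $T_xM = E_1 \oplus E_{\neq 1}$ into the generalized $1$-eigenspace of $d\phi_x$ and its complement, the implicit function theorem solves the $E_{\neq 1}$-component of the fixed-point equations for both $\phi$ and $\phi^k$ near $x$, reducing the two fixed-point sets to zero-loci of maps $G, G^{(k)}: E_1 \to E_1$. The reduced map $G^{(k)}$ is the $k$-fold iterate of $G$ in the induced center-type dynamics, so any sufficiently small zero of $G^{(k)}$, together with its $\phi$-orbit, is trapped in the chart and hence corresponds to a zero of $G$. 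Since $x$ is isolated as a fixed point of $\phi$, a neighborhood $U$ of $x$ is obtained in which $x^{(k)}$ is the unique fixed point of $\phi^k$.

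For the chain-level identification, I would choose a single $C^2$-small perturbation $K$ compactly supported in $U$ such that $\phi' = \phi_{H\#K}$ is non-degenerate in $U$. Admissibility is an open condition on linearizations, so for $K$ sufficiently small every fixed point $x_i$ of $\phi'$ in $U$ continues to satisfy it. Consequently the iterates $x_i^{(k)}$ are non-degenerate fixed points of $(\phi')^k$, and the isolation argument above, applied to $\phi'$ at each $x_i$, shows that these $x_i^{(k)}$ exhaust the $(\phi')^k$-fixed points in $U$. This yields a natural bijection between the generators of the local chain complexes underlying $\hf^{\loc}(\phi, x)$ and $\hf^{\loc}(\phi^k, x^{(k)})$, respecting the $\Z/2$-grading by the Conley--Zehnder iteration formula modulo $2$.

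The main obstacle will be matching the Floer differentials. For this I would invoke the crossing-energy mechanism referenced in the excerpt: there exists $\epsilon_0 > 0$, uniform in sufficiently small $K$, such that any Floer trajectory of $\phi'$ or of $(\phi')^k$ with asymptotes in a smaller neighborhood $V \Subset U$ but whose image exits $U$ carries energy at least $\epsilon_0$. The local differentials count only trajectories of energy below $\epsilon_0$, which are therefore trapped in $U$. Inside $U$, there is a natural correspondence, up to the $\Z/k$-symmetry obtained by re-parameterizing the circle via its $k$-fold cover, between Floer trajectories of the $(\phi')^k$-equation between iterated orbits and Floer trajectories of the $\phi'$-equation between the underlying orbits. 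A Gromov compactness plus Fredholm-index calculation shows the induced map on the relevant zero-dimensional moduli spaces is a bijection, so the identification of generators extends to a chain isomorphism and hence to an isomorphism of local Floer homologies.
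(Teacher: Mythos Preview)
The paper does not supply its own proof of this theorem; it is quoted directly from Ginzburg--G\"urel \cite[Theorem 1.1, Remark 1.2]{ginzburg2010local}. So there is no ``paper's proof'' to compare against, and your proposal should be read as an attempted independent proof of that result.

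Your isolation argument and your identification of generators are essentially correct. The point that admissibility persists under small perturbation, so that a single non-degenerate perturbation $\phi'$ of $\phi$ automatically gives a non-degenerate perturbation $(\phi')^k$ of $\phi^k$ with the same fixed points near $x$, is right and is indeed one of the ingredients in Ginzburg--G\"urel.

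The gap is in your matching of differentials. The $k$-fold iteration of a Floer trajectory $v$ for $\phi'$ does give a Floer trajectory for $(\phi')^k$, and the image of this map is exactly the set of $\Z/k$-invariant trajectories for the $(\phi')^k$-equation. But there is no reason that every low-energy $(\phi')^k$-trajectory between iterated orbits is $\Z/k$-invariant, and the non-invariant ones do not cancel in any obvious way over a general coefficient field $\bK$ or for general $k$. (When $k=p$ is prime and $\bK=\bF_p$, non-invariant trajectories come in free $\Z/p$-orbits and contribute zero mod $p$; this is the mechanism behind the equivariant pair-of-pants story, but it is specific to that situation and not what is being claimed here.) Your appeal to ``Gromov compactness plus Fredholm-index'' does not supply the missing surjectivity: both moduli spaces are already zero-dimensional and compact, so those tools give no control on the extra, non-invariant solutions. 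Consequently the asserted chain isomorphism is not established.

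The actual Ginzburg--G\"urel argument avoids comparing trajectories directly. They first split off the non-degenerate part of $d\phi_x$ symplectically and use a K\"unneth-type statement for local Floer homology to reduce to the two extreme cases. In the non-degenerate case both sides are one-dimensional and the claim is immediate. In the totally degenerate case $d\phi_x = I$, one chooses a generating Hamiltonian with vanishing Hessian at $x$ and runs a continuation argument along the family $\phi_{sH}$, $s\in[1,k]$; the point is that $x$ remains a uniformly isolated fixed point throughout this family, so deformation invariance of local Floer homology yields $\hf^{\loc}(\phi,x)\cong\hf^{\loc}(\phi^k,x^{(k)})$. If you want to repair your approach, this continuation step is what should replace the trajectory-matching paragraph.
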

\begin{remark}
Let $\phi^{k_1}$ and $\phi^{k_2}$ be admissible iterations of $\phi$ at an  isolated fixed point $x$ of $\phi$, then $\hf^{loc}(\phi^{k_1}, x^{k_1})\cong \hf^{loc}(\phi^{k_2}, x^{k_2})$ by Theorem \ref{A}.
\end{remark}
The following theorem is a summary of the canonical $\Lambda^0$-complex constructed in \cite[Section 4.4.7]{shelukhin2022hofer} and upgraded to our setting in \cite{sugimoto2021hofer} (see also \cite{shelukhin_wilkins}), we only change the coefficients to fit our setting. Let $\phi$ be a Hamiltonian diffeomorphism. For each isolated $1$-periodic orbit $x$ of $\phi$, there is an isolating neighborhood $U_x$ of $x$. Let $\phi_1$ be a sufficiently small non-degenerate perturbation of $\phi$. Because there are finitely many isolated $1$-periodic orbits, one can choose $\phi_1=\phi$ outside of $\bigcup_{x\in\fix(\phi)}U_x$. 
\begin{thm}
\label{thm: perturbation}
Suppose $\phi$ has finitely many fixed points and let $\phi_1$ be a sufficiently small non-degenerate perturbation as above. Then, there is a homotopically canonical $\Lambda_{\mathbb{K}}^0$-complex denoted by $\cf(\phi, \Lambda_{\mathbb{K}}^0)$ that satisfies the following properties:
\begin{enumerate}
    \item As a $\Lambda_{\mathbb{K}}^0$-module, \[\cf(\phi, \Lambda_{\mathbb{K}}^0)\cong\displaystyle\bigoplus_{x\in\fix(\phi)}\hf^{loc}(\phi, x)\otimes_\mathbb{K}\Lambda_{\mathbb{K}}^0.\]
    \item Its differential $d_{\phi;\phi_1}$ is defined over $\Lambda_{\mathbb{K}}^0$ and is strict.
    \item The homology of \[\cf(\phi, \Lambda_{\mathbb{K}})=\cf(\phi, \Lambda_{\mathbb{K}}^0)\otimes_{\Lambda_{\mathbb{K}}^0}\Lambda_{\mathbb{K}}\] is isomorphic to $\hf(\phi_1, \Lambda_{\mathbb{K}})$.
    \item The bar-length spectrum associated to $\cf(\phi, \Lambda_{\mathbb{K}})$, denoted by \[\beta^\prime_1(\phi, \Lambda_{\mathbb{K}})\le\dots\le\beta^\prime_{K(\phi, \Lambda_{\mathbb{K}})}(\phi, \Lambda_{\mathbb{K}}),\] satisfies $\beta^\prime_1(\phi, \Lambda_{\mathbb{K}})>\epsilon_0$ and is $2\delta_0$-close to the part \[\beta_{K^\prime+1}(\phi_1, \Lambda_{\mathbb{K}})\le\dots\le\beta_{K^\prime+K(\phi, \Lambda_{\mathbb{K}})}(\phi_1, \Lambda_{\mathbb{K}})\] of the bar-length spectrum of $\phi_1$ above $\epsilon_0$ where $\beta_{K^\prime}(\phi_1, \Lambda_{\mathbb{K}})<2\delta_0\ll\epsilon_0$ and $\delta_0\ll\epsilon_0$ is a small parameter converging to $0$ as $\phi_1$ converges to $\phi$ in the $C^2$-topology.
    \item The bar-lengths $\beta^\prime_j(\phi, \Lambda_{\mathbb{K}})$ for $1\le j\le K(\phi, \Lambda_{\mathbb{K}})$ have a limit $\beta_j(\phi, \Lambda_{\mathbb{K}})$ as the Hamiltonian perturbation tends to zero in the $C^2$-topology.  
\end{enumerate}
\end{thm}

\begin{remark}
\label{rmk:barcode_deg}
We emphasize that $(\cf(\phi, \Lambda_{\mathbb{K}}),d_{\phi;\phi_1})$ is a Floer-type complex, in the sense of \cite{usher2016persistent}, and hence admits an orthogonal basis as described above. This follows from a crossing energy estimate in the semipositive setting as in \cite[Lemma 5]{sugimoto2021hofer} and \cite{shelukhin_wilkins}. That is, the energy a Floer trajectory of a sufficiently $C^\infty$-close perturbation of $H$, which is not counted by a local differential (as in Section \ref{sec: LFH}), is bounded away from zero. In Theorem \ref{thm: perturbation}, \[\beta^\prime_1(\phi, \Lambda_{\mathbb{K}})\le\dots\le\beta^\prime_{K(\phi, \Lambda_{\mathbb{K}})}(\phi, \Lambda_{\mathbb{K}}),\] denotes the bar-legnth spectrum of this Floer-type complex, which depends on the choice of perturbation $\phi_1$ up to an error of $2\delta_0$, which converges to zero as the Hamiltonian perturbation term tends to zero. Finally, we denote by $\beta_j(\phi, \Lambda_{\mathbb{K}})$, the limit of $\beta^\prime_j(\phi, \Lambda_{\mathbb{K}})$ as the Hamiltonian perturbation term tends to zero. 
\end{remark}

\section{Semisimplicity of Quantum homology with different coefficients}\label{sec: semi}
\subsection{Reduction modulo $p$}\label{sec: reduction}
The aim of this section is to prove the following result:
\begin{thm}
\label{thm:QH_ss_alg}
 Let $\mathbb{K}$ be a field of characteristic zero.  If $\qh_{ev}(M, \Lambda_{\mathbb{K}})$ is semisimple, then $\qh_{ev}(M, \ol{Q(\Lambda_{\mathbb{Z}})})$ is semisimple. Moreover,  for all sufficiently large prime $p$, $\qh_{ev}(M, \bar{\Lambda}_{\mathbb{F}_p})$ is semisimple and is generated by a collection of idempotents $\{[\bar{e}_i]_p\}$, which are the reduction modulo $p$ of the idempotents generating $\qh_{ev}(M, \ol{Q(\Lambda_{\mathbb{Z}})})$. 
% 
% If $\qh_{ev}(M, \Lambda_{\mathbb{K}})$ is semisimple, then, for all sufficiently large prime $p$, $\qh_{ev}(M, \bar{\Lambda}_{\mathbb{F}_p})$ is semisimple and is generated by a collection of idempotents $\{[\bar{e}_i]_p\}$, which are the reduction modulo $p$ of the idempotents generating $\qh_{ev}(M, \ol{Q(\Lambda_{\mathbb{Z}})})$. 
 %where $\bar{\Lambda}_{\mathbb{F}_p}$ is the algebraic closure of $\Lambda_{\mathbb{F}_p}$.
\end{thm}
%\subsection{Semisimplicity and idempotents}\label{sec: idempotents}

The first part of the statement is a consquence of the following Proposition, which is a particular case of \cite[Proposition 2.1(A)]{entov2008symplectic}.

\begin{prop}\label{O}
Let $\mathbb{K}$ be a field of characteristic zero. If $\qh_{ev}(M, \Lambda_{\mathbb{K}})$ is semisimple, then $\qh_{ev}(M, Q(\Lambda_{\mathbb{Z}}))$ and $\qh_{ev}(M, \ol{Q(\Lambda_{\mathbb{Z}})})$ are semisimple.
\end{prop}

Under the assumption that $\qh_{ev}(M, \overline{Q(\Lambda_{\mathbb{Z}})})$ is semisimple, let $\{\overline{e}_i\}_{i=1}^m$ be a collection of idempotents such that \[\qh_{ev}(M, \overline{Q(\Lambda_{\mathbb{Z}})})=\displaystyle\bigoplus_{i=1}^{m}\overline{e}_i*\qh_{ev}(M, \overline{Q(\Lambda_{\mathbb{Z}})}).\] We now detail the process, for a suffeciently large prime $p$, of reducing the idempotents $\ol{e}_i$ to elements $[\ol{e}_i]_{p}$, in $\qh_{ev}(M,\ol\Lambda_{\bF_p})$, where $\ol\Lambda_{\bF_{p}}$ is the algebraic closure of $\Lambda_{\bF_{p}}$. First, each $\ol{e}_i$ is of the form \[\overline{e}_i=\displaystyle\sum_{j=0}^n\overline{k}_{ij}h_j\] where $\overline{k}_{ij}\in\overline{Q(\Lambda_{\mathbb{Z}})}$ and $h_j\in \h_{2j}(M)$. Then, there is a simple extension of $Q(\Lambda_{\mathbb{Z}})$ containing $\{\bar{k}_{ij}\}_{i, j}$. Let $p_{ij}(x)\in Q(\Lambda_{\mathbb{Z}})[x]$ be the minimial polynomial of $\overline{k}_{ij}$ and let $\{\alpha_{ij}^l\}$ be all of its roots. Since \[Q(\Lambda_{\mathbb{Z}})(\{\alpha_{ij}^l\}_{i, j, l})\] is a finite separable extension, the primitive element theorem implies there is an element $\alpha\in\overline{Q(\Lambda_{\mathbb{Z}})}$ such that \[Q(\Lambda_{\mathbb{Z}})(\{\alpha_{ij}^l\}_{i, j, l})=Q(\Lambda_{\mathbb{Z}})(\alpha).\] In particular, $\overline{k}_{ij}\in Q(\Lambda_{\mathbb{Z}})(\alpha)$ for all $i=1, \cdots, m$ and $j=1, \cdots, n$. Let $f(x)$ be the minimal polynomial of $\alpha$. Denote \[f(x)=a_rx^r+a_{r-1}x^{r-1}+\cdots a_1x+a_0\] where $a_i\in Q(\Lambda_{\mathbb{Z}})$. Note that there is a polynomial $f_1(x)\in\Lambda_{\mathbb{Z}}[x]$ such that 
    \begin{align*}
        Q(\Lambda_{\mathbb{Z}})(\alpha)\cong\dfrac{Q(\Lambda_{\mathbb{Z}})[x]}{(f(x))}=\dfrac{Q(\Lambda_{\mathbb{Z}})[x]}{(f_1(x))}
    \end{align*}
    In fact, assume $a_i=\dfrac{a'_i}{a^{''}_i}$ where $a'_i, a^{''}_i\in\Lambda_{\mathbb{Z}}$. Then 
    \begin{align*}
        f_1(x):=\prod_{i=1}^ra^{''}_i\cdot f(x)\in\Lambda_{\mathbb{Z}}[x]
    \end{align*}
    Denote 
    \begin{align*}
        f_1(x)=b_rx^r+b_{r-1}x^{r-1}+\cdots+b_1x+b_0
    \end{align*}
    where $b_i=\sum b_{ij}T^{\lambda_j}\in\Lambda_{\mathbb{Z}}$ and $b_{ij}\in \mathbb{Z}$. Denote by $[b_i]_p=\sum(b_{ij}\mod p)T^{\lambda_j}$. Then 
    \begin{align*}
        [f_1(x)]_p=[b_r]_px^r+[b_{r-1}]_px^{r-1}+\cdots+[b_1]_px+[b_0]_p\in \Lambda_{\mathbb{F}_p}[x]
    \end{align*}
    Note that $[f_1(x)]_p\ne 0$ since we can choose $p$ such that a single $b_{ij}\mod p\ne 0$.
    We can write 
    \begin{align*}
        [f_1(x)]_p=g_1(x)^{m_1}g_2(x)^{m_2}\cdots g_s(x)^{m_s}
    \end{align*}
in $\Lambda_{\bF_p}[x]$, where $g_i(x)$ are irreducible and distinct from each other. 

Next, we show the multiplicity of each $g_i(x)$ is $1$ for sufficiently large $p$.

\begin{claim}
$m_1=m_2=\cdots=m_s=1$ for sufficiently large $p$.
\end{claim}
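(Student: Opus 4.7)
The plan is to show that $[f]_p$ is separable in $\Lambda_{\bF_{p}, univ}[x]$ for all sufficiently large primes $p$, which directly yields $m_1 = m_2 = \cdots = m_s = 1$. The key input is that, after clearing denominators, $f$ is a scalar multiple of the monic minimal polynomial of the primitive element $\alpha$ over $Q(\Lambda_{\mathbb{Z}, univ})$. Since $\mathrm{char}\,Q(\Lambda_{\mathbb{Z}, univ}) = 0$, this minimal polynomial is automatically separable, so $f$ and its derivative $f'$ are coprime in $Q(\Lambda_{\mathbb{Z}, univ})[x]$. Equivalently, the resultant $R := \mathrm{Res}(f, f')$ is a nonzero element of $\Lambda_{\mathbb{Z}, univ}$; being the Sylvester determinant of a matrix formed from the coefficients of $f$ and $f'$, it is a $\mathbb{Z}$-polynomial in these coefficients and hence lies in $\Lambda_{\mathbb{Z}, univ}$ itself.

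Next I would write $R = \sum_{i \geq -N} r_i T^{\mu_i}$ with $r_i \in \mathbb{Z}$ and $r_{-N} \neq 0$, so that only the finitely many primes $p$ dividing $r_{-N}$ can force $[R]_p = 0$ in $\Lambda_{\bF_{p}, univ}$. Similarly, the leading $x$-coefficient $a_r$ of $f$ lies in $\Lambda_{\mathbb{Z}, univ}$ and its leading $T$-coefficient is a nonzero integer, so for all but finitely many primes $p$ one simultaneously has $[R]_p \neq 0$ and $\deg [f]_p = \deg f$ (and likewise $\deg [f']_p = \deg f'$). For any such $p$, the Sylvester determinant is functorial under the coefficient reduction $\Lambda_{\mathbb{Z}, univ}[x] \to \Lambda_{\bF_{p}, univ}[x]$, and differentiation commutes with this reduction, so
\[
[R]_p \;=\; \mathrm{Res}([f]_p, [f']_p) \;=\; \mathrm{Res}\bigl([f]_p, ([f]_p)'\bigr) \;\neq\; 0.
\]
Hence $[f]_p$ and $([f]_p)'$ are coprime in the principal ideal domain $\Lambda_{\bF_{p}, univ}[x]$, so $[f]_p$ is a separable polynomial, which forces every irreducible factor in the factorization $[f]_p = g_1^{m_1} \cdots g_s^{m_s}$ to occur with multiplicity one.

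The argument is essentially bookkeeping, and I do not anticipate any genuine obstacle beyond it: one must only verify that the set of excluded primes is finite, which reduces to the single-element observation that any nonzero element of $\Lambda_{\mathbb{Z}, univ}$ has a nonzero integer leading $T$-coefficient, applied once to $R$ and once to $a_r$. The denominator-clearing step performed in the paragraph preceding the claim guarantees that $f \in \Lambda_{\mathbb{Z}, univ}[x]$, so that $[\cdot]_p$ is unambiguously defined at this stage and Proposition~\ref{prop: field_of_frac} is not needed a second time.
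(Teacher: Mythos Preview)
Your proof is correct and follows essentially the same strategy as the paper: both arguments show that $[f]_p$ is separable for large $p$ by verifying that $[f]_p$ and $([f]_p)'$ are coprime, via reduction of a characteristic-zero coprimality certificate. The only cosmetic difference is the choice of certificate---the paper clears denominators in a B\'ezout identity $\tilde{r}f + \tilde{q}f' = \Theta$ with $\Theta \in \Lambda_{\mathbb{Z},univ}$ and reduces that, whereas you reduce the resultant $\mathrm{Res}(f,f') \in \Lambda_{\mathbb{Z},univ}$ directly.
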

\begin{proof}
Since $f_1(x)$ is an irreducible polynomial over $Q(\Lambda_{\mathbb{Z}})[x]$ we have that $$\gcd(f(x), f'(x))=1$$ where $f'(x)$ is the derivative of $f(x)$. So
    \begin{align*}
        r(x)f(x)+q(x)f'(x)=1
    \end{align*}
    for some $r(x), q(x)\in Q(\Lambda_{\mathbb{Z}})[x]$. Denote
    \begin{align*}
        r(x)=\sum\dfrac{r'_i}{r^{''}_i}x^i\quad\text{and}\quad q(x)=\sum\dfrac{q'_j}{q^{''}_j}x^j
    \end{align*}
    where $r'_i, r^{''}_i, q'_j, q^{''}_j\in\Lambda_{\mathbb{Z}}$. Let $\Theta:=\prod r^{''}_i\cdot\prod q^{''}_j\cdot \prod a^{''}_i\in\Lambda_{\mathbb{Z}}$. Then 
    \begin{align}
        \tilde{r}(x)f_1(x)+\tilde{q}(x)f'_1(x)=\Theta
    \end{align}
    where $\tilde{r}(x)=\prod r^{''}_i\cdot\prod q^{''}_j\cdot r(x)\in \Lambda_{\mathbb{Z}}$ and $\tilde{q}(x)=\prod r^{''}_i\cdot\prod q^{''}_j\cdot q(x)\in \Lambda_{\mathbb{Z}}$. Applying the same way as we get $[f_1(x)]_p$ to Equation (1), we get
    \begin{align*}
        [\tilde{r}(x)]_p[f_1(x)]_p+[\tilde{q}(x)]_p[f'_1(x)]_p=[\Theta]_p
    \end{align*}
    By choosing sufficiently large $p$, we can have $[\Theta]_p\ne 0$. Thus $\gcd([f_1(x)]_p, [f'_1(x)]_p)=1$ in  to $K_{ij}(x)$, we, hence, $m_1=m_2=\cdots=m_s=1$.
\end{proof}
Thus, for a sufficiently large prime $p$, $[f_1(x)]_p=g_1(x)g_2(x)\cdots g_s(x)$ where $g_i(x)$ are irreducible and distinct from each other.
%\subsection{Reduction of idempotents}\label{sec: reduction_idempotent}

Recall that we can write $\overline{e}_i=\sum\overline{k}_{ij}h_j$ where $\overline{k}_{ij}\in Q(\Lambda_{\mathbb{Z}})(\alpha)$ is non-zero and that \[Q(\Lambda_{\mathbb{Z}})(\alpha)\cong\frac{Q(\Lambda_{\mathbb{Z}})[x]}{(f_1(x))}.\] Therefore, we can write $\overline{k}_{ij}$ as $K_{ij}(x)+(f_1(x))$ for $K_{ij}(x)\in Q(\Lambda_{\Z})[x]$. 

We repeat the process used to reduce $f_1(x)$ modulo $p$, i.e. to obtain $[f_1(x)]_p$, in order to reduce  to $K_{ij}(x)$ modulo $p$, and obtain $[K_{ij}(x)]_{p}\in\Lambda_{\bF_p}[x]$.

Denote, $$K_{ij}(x)=\sum\dfrac{K'_{ijk}}{K^{''}_{ijk}}x^k,\quad\Upsilon_{K_{ij}}:=\displaystyle\prod_kK^{''}_{ijk}.$$ Note that $\widetilde{K}_{ij}(x):=\Upsilon_{K_{ij}}\cdot K_{ij}$ is an element in $\Lambda_{\Z}[x]$. Let $[\widetilde{K}_{ij}(x)]_p$ be its reduction modulo $p$.

\begin{claim}
        For a sufficiently large prime $p$ we have that $[\widetilde{K}_{ij}(x)]_p+([f_1(x)]_p)$ is non-zero in ${\Lambda_{\mathbb{F}_p}[x]}/{([f_1(x)]_p)}$.
\end{claim}

    \begin{proof}
    Since $\bar{k}_{ij}$ is invertible, there is an element $L_{ij}(x)\in Q(\Lambda_{\mathbb{Z}})[x]$ such that 
\begin{align*}
        K_{ij}(x)L_{ij}(x)+(f_1(x))=1+(f_1(x))
\end{align*}
    in particular,
\begin{align*}
        K_{ij}(x)L_{ij}(x)=1+M_{ij}(x)f_1(x)
\end{align*}
    for $M_{ij}(x)\in Q(\Lambda_{\mathbb{Z}})$. Denote 
    \begin{align*}
	 L_{ij}(x)=\sum\dfrac{L'_{ijk}}{L^{''}_{ijk}}x^k,\quad \text{and}\quad M_{ij}(x)=\sum\dfrac{M'_{ijk}}{M^{''}_{ijk}}x^k,
    \end{align*}
    Let  $\Upsilon_{L_{ij}}:=\displaystyle\prod_kL^{''}_{ijk}$, and $\Upsilon_{M_{ij}}:=\displaystyle\prod_kM^{''}_{ijk}$. Then,
    \begin{align}
        \widetilde{K}_{ij}(x)\widetilde{L}_{ij}(x)\Upsilon_{M_{ij}}=\Upsilon_{K_{ij}}\Upsilon_{L_{ij}}\Upsilon_{M_{ij}}+\widetilde{M}_{ij}(x)f_1(x)\Upsilon_{K_{ij}}\Upsilon_{L_{ij}}
    \end{align}
    where 
    \begin{align*}
        \widetilde{K}_{ij}(x)=\Upsilon_{K_{ij}}\cdot K_{ij}(x),\quad \widetilde{L}_{ij}(x)=\Upsilon_{L_{ij}}\cdot L_{ij}(x),\quad \text{and}\quad \widetilde{M}_{ij}(x)=\Upsilon_{M_{ij}}\cdot M_{ij}(x),
    \end{align*}
    all belong to $\Lambda_{\mathbb{Z}}$. Applying the same way as we get $[f_1(x)]_p$ to Equation (2), we have
    \begin{align*}
        [\widetilde{K}_{ij}(x)]_p[\widetilde{L}_{ij}(x)]_p[\Upsilon_{M_{ij}}]_p=[\Upsilon_{K_{ij}}]_p[\Upsilon_{L_{ij}}]_p[\Upsilon_{M_{ij}}]_p+[\widetilde{M}_{ij}(x)]_p[f_1(x)]_p[\Upsilon_{K_{ij}}]_p[\Upsilon_{L_{ij}}]_p
    \end{align*}
    By choosing sufficiently large $p$, we can have $[\Upsilon_{K_{ij}}]_p\ne 0$, $[\Upsilon_{L_{ij}}]_p\ne 0$, $[\Upsilon_{M_{ij}}]_p\ne 0$, and $[f_1(x)]_p\ne 0$. Thus,
    \begin{align*}
        [\widetilde{K}_{ij}(x)]_p[\widetilde{L}_{ij}(x)]_p+([f_1(x)]_p)\ne 0\: \text{in}\: \dfrac{\Lambda_{\mathbb{F}_p}[x]}{([f_1(x)]_p)}
    \end{align*}
    In particular, 
    \begin{align*}
        [\widetilde{K}_{ij}(x)]_p+([f_1(x)]_p)\ne 0\: \text{in}\: \dfrac{\Lambda_{\mathbb{F}_p}[x]}{([f_1(x)]_p)}
    \end{align*}
     \end{proof}
  Note that 
\begin{align*}
        \dfrac{\Lambda_{\mathbb{F}_p}[x]}{([f_1(x)]_p)}=\dfrac{\Lambda_{\mathbb{F}_p}[x]}{(g_1(x)g_2(x)\cdots g_s(x))}\cong\prod_{l=1}^s \dfrac{\Lambda_{\mathbb{F}_p}[x]}{(g_l(x))}
\end{align*}
    and each term in the product is a field because $g_l(x)$ is irreducible. Let
\begin{align*}
        P_l: \dfrac{\Lambda_{\mathbb{F}_p}[x]}{([f_1(x)]_p)}\to \dfrac{\Lambda_{\mathbb{F}_p}[x]}{(g_l(x))}
\end{align*}
    be the projection. Then, there exists at least one $P_l$ such that 
\begin{align*}
        P_l\left([\widetilde{K}_{ij}(x)]_p[\Upsilon_{K_{ij}}]_p^{-1}+([f_1(x)]_p)\right)\ne 0
\end{align*}
    Furthermore, let 
    \begin{align*}
        \iota_l: \dfrac{\Lambda_{\mathbb{F}_p}[x]}{(g_l(x))}\to \bar{\Lambda}_{\mathbb{F}_p}
    \end{align*}
    be the inclusion, then 
    \begin{align*}
        \iota_l\left(P_l\left([\widetilde{K}_{ij}(x)]_p[\Upsilon_{K_{ij}}]_p^{-1}+([f_1(x)]_p)\right)\right)\ne 0
    \end{align*}
    We define 
    \begin{align*}[\bar{k}_{ij}]_p:=\iota_l\left(P_l\left([\widetilde{K}_{ij}(x)]_p[\Upsilon_{K_{ij}}]_p^{-1}+([f_1(x)]_p)\right)\right)
    \end{align*}
    and 
    \begin{align*}
        [\bar{e}_i]_p:=\sum[\bar{k}_{ij}]_ph_j
    \end{align*}
    where $[\bar{e}_i]_p\ne 0$ for $i=1, \cdots, m$ when $p$ is sufficiently large.

\begin{remark}
In fact, for any $l=1, \cdots, s$ and sufficiently large $p$,
    \begin{align*}
        P_l\left([\widetilde{K}_{ij}(x)]_p[\Upsilon_{K_{ij}}]_p^{-1}+([f_1(x)]_p)\right)\ne 0
    \end{align*}
    Indeed, suppose otherwise then, there exists $\delta(x)\in\Lambda_{\mathbb{Z}}[x]$ such that 
    \begin{align*}
        [\widetilde{K}_{ij}(x)]_p[\Upsilon_{K_{ij}}]_p^{-1}=g_l(x)\delta(x)
    \end{align*}
    Now we can write the Equation (3.3) as:
    \begin{align*}
        g_l(x)\delta(x)[\Upsilon_{K_{ij}}]_p[\widetilde{L}_{ij}(x)]_p[\Upsilon_{M_{ij}}]_p=[\Upsilon_{K_{ij}}]_p[\Upsilon_{L_{ij}}]_p[\Upsilon_{M_{ij}}]+[\widetilde{M}_{ij}(x)]_p[f_1(x)]_p[\Upsilon_{K_{ij}}]_p[\Upsilon_{L_{ij}}]_p
    \end{align*}
    Let $\eta$ be a root of $g_l(x)$. By plugging $\eta$ into the equation, we obtain $[\Upsilon_{K_{ij}}]_p[\Upsilon_{L_{ij}}]_p[\Upsilon_{M_{ij}}]=0$, which contradicts our choice of $p$.
\end{remark}

\begin{prop}\label{AA}
The reductions $[\overline{e}_i]_p$ are idempotents in $\qh_{ev}(M, \overline{\Lambda}_{\bF_p})$ for all $i=1, \cdots, m$. In addition, $[\overline{e}_i]_p*[\overline{e}_j]_p=0$ for $i\ne j$, and $\sum_{i=1}^m[\overline{e}_i]_p=1$ where $1$ is the multiplicative identity in $\qh_{ev}(M, \overline{\Lambda}_{\bF_p})$.
\end{prop}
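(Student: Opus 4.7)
The plan is to construct, for a sufficiently large prime $p$, a single ring homomorphism that extends the reduction-mod-$p$ procedure of Section \ref{sec: reduction_idempotent}, and then apply it to the defining algebraic identities of the $\overline{e}_i$. Any ring homomorphism sends idempotents to idempotents, orthogonal idempotents to orthogonal idempotents, and the multiplicative unit to the multiplicative unit, so all three claimed relations will follow automatically once this map is in place.

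Concretely, I would fix one irreducible factor $g_l$ of $[f]_p$ together with an embedding $\iota_l: \Lambda_{\bF_p, univ}[x]/(g_l) \hookrightarrow \overline{\Lambda}_{\bF_p, univ}$, and let $R_p \subset Q(\Lambda_{\Z, univ})[x]/(f) \cong Q(\Lambda_{\Z, univ})(\alpha)$ denote the subring of elements admitting a representative $K(x)/\Upsilon_K + (f)$ with $K \in \Lambda_{\Z, univ}[x]$, $\Upsilon_K \in \Lambda_{\Z, univ}$, and $[\Upsilon_K]_p \neq 0$. This subset is closed under addition and multiplication because $\Lambda_{\bF_p, univ}$ is an integral domain, so $[\Upsilon_1 \Upsilon_2]_p = [\Upsilon_1]_p [\Upsilon_2]_p \neq 0$. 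The composition
\[
\rho_p: R_p \longrightarrow \Lambda_{\bF_p, univ}[x]/([f]_p) \xrightarrow{P_l} \Lambda_{\bF_p, univ}[x]/(g_l) \xrightarrow{\iota_l} \overline{\Lambda}_{\bF_p, univ}
\]
is then a ring homomorphism, and the analysis of Section \ref{sec: reduction_idempotent} shows that for $p$ large enough every $\overline{k}_{ij}$ lies in $R_p$ with $\rho_p(\overline{k}_{ij}) = [\overline{k}_{ij}]_p$. Enlarging $p$ further if necessary to avoid the finitely many primes dividing the denominators of the quantum product structure constants of the basis $\{h_j\}$ (which lie in $\Lambda_{\Q, univ}$ with bounded denominators coming from the inverse intersection matrix $g^{\nu\mu}$), the quantum product restricts to a product on $\qh_{ev}(M, R_p) := \h_*(M;\Z) \otimes_\Z R_p$, and $\rho_p$ extends coefficient-wise to a ring homomorphism
\[
\widetilde{\rho}_p: \qh_{ev}(M, R_p) \longrightarrow \qh_{ev}(M, \overline{\Lambda}_{\bF_p, univ}), \qquad \widetilde{\rho}_p(\overline{e}_i) = [\overline{e}_i]_p, \quad \widetilde{\rho}_p(1) = 1.
\]

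Since every $\overline{k}_{ij}$ lies in $R_p$, the identities $\overline{e}_i * \overline{e}_i = \overline{e}_i$, $\overline{e}_i * \overline{e}_j = 0$ for $i \ne j$, and $\sum_{i=1}^m \overline{e}_i = 1$ already take place inside $\qh_{ev}(M, R_p)$, and applying $\widetilde{\rho}_p$ produces the three required identities in $\qh_{ev}(M, \overline{\Lambda}_{\bF_p, univ})$. The main technical point I expect to spell out is that a single prime $p$ can be chosen to accommodate all the conditions simultaneously: nontrivial reducibility of the denominators of every $\overline{k}_{ij}$, nonvanishing of $[f]_p$ with distinct irreducible factors $g_1, \dots, g_s$, and reducibility of the relevant quantum structure constants. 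Each of these excludes only finitely many primes, by Proposition \ref{prop: field_of_frac} and the arguments already present in Section \ref{sec: reduction_idempotent}, so such a prime $p$ exists.
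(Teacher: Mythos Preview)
Your proposal is correct and follows essentially the same idea as the paper's proof: the paper simply writes down the three identities $\overline{e}_i*\overline{e}_i=\overline{e}_i$, $\overline{e}_i*\overline{e}_j=0$ for $i\neq j$, and $\sum_i\overline{e}_i=1$ in $\qh_{ev}(M,\overline{Q(\Lambda_{\Z,univ})})$ and asserts that ``one can reduce the three equations to $\qh_{ev}(M,\overline{\Lambda}_{\bF_p,univ})$.'' Your argument is a careful unpacking of exactly why that reduction is legitimate, namely by exhibiting the reduction as a ring homomorphism $\widetilde{\rho}_p$ on a subring $\qh_{ev}(M,R_p)$ containing all the relevant elements; this is the content the paper leaves implicit.
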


\begin{proof}
Since $\qh_{ev}(M, \overline{Q(\Lambda_{\mathbb{Z}})})$ is semisimple and $\{\overline{e}_i\}_{i=1}^m$ are idempotents such that \[\qh_{ev}(M, \overline{Q(\Lambda_{\mathbb{Z}})})=\displaystyle\bigoplus_{i=1}^m\overline{e}_i*\qh_{ev}(M, \overline{Q(\Lambda_{\mathbb{Z}})}),\] we have the following equations:
\begin{equation*}
    \overline{e}_i*\overline{e}_i=\overline{e}_i
\end{equation*}
for $i=1, \cdots, m$,
\begin{equation*}
    \overline{e}_i*\overline{e}_j=0, i\ne j
\end{equation*}
for $i,j=1, \cdots, m$,
\begin{equation*}
    \displaystyle\sum_{i=1}^m\overline{e}_i=1
\end{equation*}
By applying the above procedure for getting $[\bar{e}_i]$ to the above equations, we have the following equations in $\qh_{ev}(M, \bar{\Lambda}_{\mathbb{F}_p})$:
    \begin{align*}
        [\bar{e}_i]_p*[\bar{e}_i]_p=[\bar{e_i}]_p
    \end{align*}
    for $i=1, \cdots, m$
    \begin{align*}
        [\bar{e}_i]_p*[\bar{e}_j]_p=0, i\ne j
    \end{align*}
    for $i,j=1, \cdots, m$
    \begin{align*}
        \sum_{i=1}^m[\bar{e}_i]_p=1
    \end{align*} 
Thus, \[\qh_{ev}(M, \overline{\Lambda}_{\bF_p})=\displaystyle\bigoplus_{i=1}^m[\overline{e}_i]_p* \qh_{ev}(M, \overline{\Lambda}_{\bF_p}).\]
\end{proof}

\subsubsection*{Proof of Theorem \ref{thm:QH_ss_alg}}
Note that each $\overline{e}_i*\qh_{ev}(M, \overline{Q(\Lambda_{\mathbb{Z}})})$ is an algebraic field extension of $\overline{Q(\Lambda_{\mathbb{Z}})}$, which in turn, is the algebraic closure of $Q(\Lambda_{\mathbb{Z}})$. Thus, \[\overline{e}_i*\qh_{ev}(M, \overline{Q(\Lambda_{\mathbb{Z}})})\cong\overline{Q(\Lambda_{\mathbb{Z}})}.\] Also, \[\qh_{ev}(M, \overline{Q(\Lambda_{\mathbb{Z}})})=\displaystyle\bigoplus_{i=1}^m\overline{e}_i*\qh_{ev}(M, \overline{Q(\Lambda_{\mathbb{Z}})})\cong\bigoplus_{i=1}^m\overline{Q(\Lambda_{\mathbb{Z}})}.\] Thus $m=\rank(\h_{ev}(M))$. Since $\qh_{ev}(M, \overline{\Lambda}_{\bF_p})$ is a free $\overline{\Lambda}_{\bF_p}$-module and $[\overline{e}_i]_p*\qh_{ev}(M, \ol\Lambda_{\bF_p})$ is a submodule, $[\overline{e}_i]_p*\qh_{ev}(M, \ol\Lambda_{\bF_p})$ can be written as a direct sum of copies of $\overline{\Lambda}_{\bF_p}$. The facts that $m=\rank(\h_{ev}(M, \mathbb{Q}))=\rank(\h_{ev}(M, \bF_{p}))$ for sufficiently large $p$, $[\overline{e}_i]_p$ is nonzero and \[[\overline{e}_i]_p*\qh_{ev}(M,\ol\Lambda_{\bF_p})\cap [\overline{e}_j]_p*\qh_{ev}(M,\ol\Lambda_{\bF_p})=0\] for $i\neq j$, imply that \[[\overline{e}_i]_p*\qh_{ev}(M, \ol\Lambda_{\bF_p})\cong\overline{\Lambda}_{\bF_p}.\] Thus $\qh_{ev}(M, \overline{\Lambda}_{\bF_p})$ is semisimple and is generated by the idempotents $\{[\overline{e}_i]_p\}_{i=1}^m$.\qed

\section{Upper bound on the boundary depth}\label{sec: upper}
In this section we prove the following:

\begin{thm}
\label{thm:upper_bound}
Suppose that $\qh_{ev}(M,\Lambda_{\bK})$ is semisimple and $\bK$ is a ground field of characteristic zero. Then, the boundary depth of each $\psi\in\Ham(M,\om)$, satisfies $\beta(\psi, \Lambda_{\bF_p})\leq C$ for all sufficiently large prime $p$, where $C$ is a positive constant independent of $p$.
\end{thm}

First, we find an upper bound, independent of sufficiently large $p$, for the valuations of the idempotents $\{[\ol{e}_i]_p\}$ from Section \ref{sec: semi}.

Recall from Section \ref{sec: reduction} that $f$ is the minimal polynomial of $\alpha$ and, reducing to $\Lambda_{\bF_p}[x]$, we obtain $[f_1(x)]_p=g_1(x)g_2(x)\cdots g_s(x)$ where $g_i(x)$ are irreducible and distinct from each other for sufficiently large $p$. Therefore, \[|[f_1(0)]_p|=\displaystyle\prod_{i=1}^s|g_i(0)|.\] Thus, for some $i$, we have $|g_i(0)|\le|[f_1(0)]_p|^{1/s}$. Without loss of generality, assume that $|g_1(0)|\le|[f_1(0)]_p|^{1/s}$. There is an algebraic element $\gamma$ over $\Lambda_{\bF_p}$ such that \[\frac{\Lambda_{\bF_p}[x]}{(g_1)}=\Lambda_{\bF_p}(\gamma).\] Since $\Lambda_{\bF_p}$ is complete, by Remark \ref{rmk: valuation_extension}, one obtains a norm $|\cdot|$ and, hence, a valuation $-\ln(|\cdot|)$ on $\Lambda_{\bF_p}(\gamma)$.

\begin{lem}
Let $b\in Q(\Lambda_{\mathbb{Z}})$. Then $|b|\ge|[b]_p|$ for sufficiently large $p$.
\end{lem}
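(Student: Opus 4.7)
The plan is to invoke Proposition \ref{prop: field_of_frac} to put $b$ into a form where its reduction modulo $p$ is transparent, and then observe that for large $p$ the leading exponent of the $T$-expansion is preserved under reduction, which gives the desired norm comparison (in fact equality).

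More concretely, first I would apply Proposition \ref{prop: field_of_frac} to write
\begin{equation*}
b = \sum_{j\ge -K} c_j T^{\mu_j}, \qquad c_j\in\Q,\ \mu_{-K}<\mu_{-K+1}<\cdots\nearrow+\infty,
\end{equation*}
with $c_{-K}\ne 0$, and such that only finitely many primes $q_1,\ldots,q_r$ occur in the denominators of the $c_j$. By the definition of $\nu$ on $Q(\Lambda_{\Z,univ})$ from Section \ref{sec: non-Arch_val}, we have $\nu(b)=\mu_{-K}$, hence $|b|=e^{-\mu_{-K}}$.

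Next I would choose $p$ large enough that two conditions hold: (i) $p\notin\{q_1,\ldots,q_r\}$, so that every coefficient $c_j$ has a well-defined reduction $[c_j]_p\in\F_p$ and consequently $[b]_p=\sum_{j\ge -K}[c_j]_p T^{\mu_j}$ is a well-defined element of $\Lambda_{\F_p,univ}$; and (ii) $p$ does not divide the numerator of the (fixed) rational number $c_{-K}$, so that $[c_{-K}]_p\ne 0$ in $\F_p$. Both conditions exclude only finitely many primes, so they are satisfied for all sufficiently large $p$.

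Under these conditions the leading term of $[b]_p$ is $[c_{-K}]_p T^{\mu_{-K}}$ with $[c_{-K}]_p\ne 0$, so $\nu([b]_p)=\mu_{-K}=\nu(b)$, giving $|[b]_p|=e^{-\mu_{-K}}=|b|$. In particular $|b|\ge |[b]_p|$. There is no real obstacle here; the only thing to be careful about is that Proposition \ref{prop: field_of_frac} is essential — as noted in Remark \ref{rmk: reduction}, the analogous statement would fail for arbitrary elements of $\Lambda_{\Q,univ}$, because infinitely many primes could appear in the denominators and one could not guarantee that the leading coefficient survives reduction.
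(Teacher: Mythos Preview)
Your proof is correct and follows essentially the same approach as the paper: write $b$ in its $T$-expansion with $\Q$-coefficients (via Proposition \ref{prop: field_of_frac}), and compare leading exponents after reduction mod $p$. The only minor difference is that the paper does not bother to exclude primes dividing the numerator of the leading coefficient and simply observes that if $[c_{-K}]_p=0$ then $\nu([b]_p)>\nu(b)$, still yielding $|b|\ge|[b]_p|$; your additional condition (ii) gives the slightly sharper conclusion $|b|=|[b]_p|$ for all sufficiently large $p$.
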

\begin{proof}
By Proposition \ref{prop:finite_denominators} we can write $b=\sum_{i=-K}^\infty b_iT^{\lambda_i}$ with $\lambda_{-K}<\lambda_{-K+1}<\cdots$. Then $\nu(b)=\lambda_{-K}$. Write $b_{-K}=\frac{b_{-K,0}}{b_{-K, 1}}$. For a sufficiently large prime $p$, we have that $[b_{-K,1}]_p$ is invertible, thus, $[b_{-K}]_p=[b_{-K, 0}]_p[b_{-K, 1}]_p^{-1}$. If $[b_{-K}]_p=0$, then $\nu(b)<\nu([b]_p)$. If $[b_{-K}]_p\ne 0$, then $\nu(b)=\nu([b]_p)$. Since $|\cdot|=e^{-\nu(\cdot)}$, then $|b|\ge|[b]_p|$.
\end{proof}

\begin{remark}
The prime $p$ in the above lemma depends on $b$ and there is not a uniform $p$ for all elements in $Q(\Lambda_{\mathbb{Z}})$.
\end{remark}

\begin{prop}
$|\gamma|\le\max\{1, |f_1(0)|\}$.
\end{prop}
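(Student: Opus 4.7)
The plan is to show that $\gamma$, being a root of an irreducible polynomial over the complete non-Archimedean field $\Lambda_{\bF_p,univ}$, satisfies $|\gamma|^d=|g_1(0)|$ (after normalizing $g_1$ to be monic, with $d=\deg g_1$), and then chain this with the already-established inequalities $|g_1(0)|\le|[f(0)]_p|^{1/s}$ and $|[f(0)]_p|\le|f(0)|$ to reach the desired bound.

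First I would observe that, since $g_1$ is irreducible over the complete field $\Lambda_{\bF_p,univ}$, Proposition \ref{prop: unique_extension} guarantees that the norm on $\Lambda_{\bF_p,univ}$ extends uniquely to $\overline{\Lambda}_{\bF_p,univ}$. Precomposing this extension with any Galois automorphism of $\overline{\Lambda}_{\bF_p,univ}/\Lambda_{\bF_p,univ}$ produces another extension, which by uniqueness must coincide with the original; hence the norm on $\overline{\Lambda}_{\bF_p,univ}$ is Galois-invariant. Since the roots of the irreducible polynomial $g_1$ form a single Galois orbit, they all share the common norm $|\gamma|$.

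Next, I would rescale $g_1$ to be monic without changing its roots (and view the factorization $[f]_p=g_1\cdots g_s$ as one by monic irreducibles, so the inequality $|g_1(0)|\le|[f(0)]_p|^{1/s}$ derived immediately before the proposition is compatible). Writing $g_1(x)=\prod_{i=1}^d(x-\gamma_i)$ in $\overline{\Lambda}_{\bF_p,univ}[x]$, Vieta gives $|g_1(0)|=\prod_{i=1}^d|\gamma_i|$; together with the equality of the $|\gamma_i|$ this yields $|g_1(0)|=|\gamma|^d$, i.e.\ $|\gamma|=|g_1(0)|^{1/d}$. Combining with $|g_1(0)|\le|[f(0)]_p|^{1/s}$ and with the preceding lemma applied to $b=f(0)\in\Lambda_{\Z,univ}\subset Q(\Lambda_{\Z,univ})$ (giving $|[f(0)]_p|\le|f(0)|$), I obtain $|\gamma|\le|f(0)|^{1/(ds)}$. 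Since $ds\ge 1$, the elementary observation $y^{1/(ds)}\le\max\{1,y\}$ for $y\ge 0$ (case split on $y\le 1$ versus $y>1$) finishes the argument.

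The main non-trivial step is the Galois-invariance of the norm on $\overline{\Lambda}_{\bF_p,univ}$, which is exactly why the proposition is positioned immediately after the development of unique norm extensions and the introduction of the completion $\Lambda_{\bF_p,univ}$; the rest is bookkeeping using Vieta's formula and the two inequalities already at hand.
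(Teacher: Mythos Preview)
Your argument is correct and follows the same overall structure as the paper's: establish $|\gamma|=|g_1(0)|^{1/d}$ (with $d=\deg g_1$), chain it with $|g_1(0)|\le|[f(0)]_p|^{1/s}$ and $|[f(0)]_p|\le|f(0)|$, and then case-split on whether $|f(0)|\le 1$. The only difference lies in how you justify $|\gamma|=|g_1(0)|^{1/d}$. The paper reads this off directly from the explicit extension formula of Proposition~\ref{prop: valuation_extension1}, namely $\|\gamma\|=|N_{\bL/\bK}(\gamma)|^{1/d}$, together with the identity $N_{\bL/\bK}(\gamma)=(-1)^d g_1(0)$ (valid because $g_1$ is both the minimal and the characteristic polynomial of $\gamma$). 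You instead invoke the uniqueness statement of Proposition~\ref{prop: unique_extension} to deduce Galois-invariance of the norm on $\overline{\Lambda}_{\bF_p,univ}$, and then apply Vieta to the monic $g_1$. Your route is essentially the standard proof of the norm formula the paper cites, so the two arguments are equivalent in content; the paper's is a one-line citation while yours unpacks it from first principles. (A small remark: your ``single Galois orbit'' phrasing implicitly uses separability of $g_1$; this is fine here since $\Lambda_{\bF_p,univ}$ is perfect, but you could also just use that any two roots of an irreducible polynomial are conjugate by an automorphism of $\overline{\Lambda}_{\bF_p,univ}$ over $\Lambda_{\bF_p,univ}$.)
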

\begin{proof}
One can see that $g_1$ is both the characteristic polynomial and the minimal polynomial of $\gamma$. Thus $N_{\Lambda_{\bF_p}(\gamma)/\Lambda_{\bF_p}}(\gamma)=(-1)^Mg_1(0)$ where $M$ is the degree of $g_1$. Then $|\gamma|=|N_{\Lambda_{\bF_p}(\gamma)/\Lambda_{\bF_p}}(\gamma)|^{1/M}=|g_1(0)|^{1/M}$. Furthermore $|\gamma|\le|f_1(0)|^{\frac{1}{sM}}$. Let $N$ be the degree of $f_1$. If $|f_1(0)|\le 1$, then $|f_1(0)|^{\frac{1}{sM}}\le 1$. If $|f_1(0)|>1$, $|f_1(0)|^{\frac{1}{sM}}\le|f_1(0)|$. Hence, $|\gamma|\leq\max\{1, |f_1(0)|\}$ as required. 
\end{proof}

\begin{prop}\label{BBB}
There is a number $\delta$, independent of $p$, such that $l([\overline{e}_i]_p)\le\delta$ for all $i=1, \cdots, m$ and all sufficiently large $p$.
\end{prop}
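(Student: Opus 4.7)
The bound is essentially already written down in the paragraph immediately preceding the statement; the plan is to repackage that display as a uniform estimate and carefully check that every quantity appearing in it is independent of $p$.

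The plan is to start from the norm estimate $|\gamma|\le\max\{1,|f(0)|\}$ proved in the preceding proposition. Since $f\in\Lambda_{\Z,univ}[x]$ was chosen once and for all (it is the minimal polynomial of $\alpha$, cleared of denominators), its constant term $a_0=f(0)$ is a fixed element of $\Lambda_{\Z,univ}$, and by the earlier lemma $|[a_0]_p|\le|a_0|$ whenever $p$ is large enough that $[a_0]_p\ne 0$; so $\max\{1,|f(0)|\}$ is a $p$-independent constant. Next, each coefficient $\overline{k}_{ij}=\sum_{s=0}^N b_{ijs}\alpha^s$ is fixed, with $b_{ijs}\in Q(\Lambda_{\Z,univ})$, so the constant $\Xi=\max_{i,j,s}|b_{ijs}|$ is also independent of $p$.

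For each sufficiently large $p$, one has $[\overline{k}_{ij}]_p=\sum_{s=0}^N[b_{ijs}]_p\gamma^s$. Applying the non-Archimedean property of the extended norm on $\Lambda_{\bF_p,univ}(\gamma)$ together with $|[b_{ijs}]_p|\le|b_{ijs}|\le\Xi$ and $|\gamma|^s\le(\max\{1,|f(0)|\})^N$, one obtains
\begin{equation*}
|[\overline{k}_{ij}]_p|\ \le\ \Xi\cdot\bigl(\max\{1,|f(0)|\}\bigr)^N.
\end{equation*}
Since $l([\overline{e}_i]_p)=\max_{0\le j\le n}\{-\nu([\overline{k}_{ij}]_p)\}=\max_{0\le j\le n}\ln|[\overline{k}_{ij}]_p|$, one concludes
\begin{equation*}
l([\overline{e}_i]_p)\ \le\ \ln\!\Bigl(\Xi\cdot\bigl(\max\{1,|f(0)|\}\bigr)^N\Bigr)\ =:\ \delta,
\end{equation*}
a bound depending only on $\Xi$, $|f(0)|$ and $N$, hence independent of both $i$ and $p$.

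The only thing to take care of is choosing $p$ large enough that the finitely many reductions carried out in Sections~\ref{sec: reduction} and~\ref{sec: reduction_idempotent} are simultaneously valid: namely that $[a_0]_p\ne 0$, that each $[b_{ijs}]_p$ is well-defined (so the lemma $|[b_{ijs}]_p|\le|b_{ijs}|$ applies), and that the factorization $[f]_p=g_1\cdots g_s$ into distinct irreducibles holds so that the extension $\Lambda_{\bF_p,univ}(\gamma)$ and its extended valuation exist. Each of these excludes only finitely many primes, so a common $p_0$ works. I do not expect a genuine obstacle; the real content was already in the preceding proposition bounding $|\gamma|$ uniformly.
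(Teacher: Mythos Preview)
Your proposal is correct and follows exactly the paper's approach: the paper derives the bound $l([\overline{e}_i]_p)\le\ln\bigl(\Xi(\max\{1,|f(0)|\})^N\bigr)$ in the paragraph immediately preceding the proposition, and states the proposition as a summary of that computation. Your write-up is slightly more careful in spelling out why each ingredient ($\Xi$, $N$, $|f(0)|$) is $p$-independent and in invoking the lemma $|[b]_p|\le|b|$ explicitly, but the argument is the same.
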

\begin{proof}
Recall from Section \ref{sec: semi} that $\{\overline{e}_i=\sum_{j=0}^n\overline{k}_{ij}h_j\}_{i=1}^m$ and $\{[\overline{e}_i]_p=\sum_{j=0}^n[\overline{k}_{ij}]_ph_j\}_{i=1}^m$ are the idempotents of $\qh_{ev}(M, \overline{Q(\Lambda_{\mathbb{Z}})})$ and  $\qh_{ev}(M, \overline{\Lambda}_{\bF_p})$, respectively, where $\overline{k}_{ij}\in Q(\Lambda_{\mathbb{Z}})(\alpha)$ and $[\overline{k}_{ij}]_p\in\Lambda_{\bF_p}[x]/(g_1(x))=\Lambda_{\bF_p}(\gamma)$. Suppose $\overline{k}_{ij}=\sum_{s=0}^N b_{ijs}\alpha^s$. Then, $[\overline{k}_{ij}]_p=\sum_{s=0}^N[b_{ijs}]_p\gamma^s$. Denote by $\Xi:=\displaystyle\max_{i, j, s}\{|b_{ijs}|\}$. Thus, \[|[\overline{k}_{ij}]_p|\le\displaystyle\max_{0\le s\le N}\{|[b_{ijs}]_p\gamma^s|\}=\displaystyle\max_{0\le s\le N}\{|[b_{ijs}]_p||\gamma^s|\}\le\Xi(\max\{1, |f_1(0)|\})^N.\] Thus, \[l([\overline{e}_i]_p)=\displaystyle\max_{0\le j\le n}\{-\nu([\overline{k}_{ij}]_p)\}\le\ln(\Xi(\max\{1, f_1(0)\})^N).\] 
\end{proof}

\begin{definition}
Suppose $\qh_{ev}(M, \mathbb{K})$ is semisimple and $E=\{e_1, \cdots, e_m\}$ are idempotents. Then, define
\begin{align*}
	 \gamma_{e_j}(H, \mathbb{K})&=c(e_j, H, \mathbb{K})+c(e_j, \overline{H}, \mathbb{K})\\
	  \gamma_{e_j}(\phi, \mathbb{K})&=\displaystyle\inf_{\phi_H^1=\phi}\gamma_{e_j}(H, \mathbb{K})
\end{align*}
and 
\begin{align*}
	\gamma_E(\phi, \mathbb{K})=\displaystyle\max_{1\le i\le m}\gamma_{e_j}(\phi, \mathbb{K}).
\end{align*}
\end{definition}

Next, it is shown that from the upper bound on the valuations of the idempotents, it is possible to obtain an upper bound for $\gamma_{E_p}$ where $E_p=\{[\ol{e}_1]_p,\dots,[\ol{e}_m]_p\}$.

\begin{lem}\label{DDD}
Let $H$ be a non-degenerate Hamiltonian function and let $\theta\in\qh_*(M, \overline{\Lambda}_{\bF_p})$. Then, we have $c(\theta, 0, \overline{\Lambda}_{\bF_p})\ge 0$ if $\nu(\Delta(\pss_H(\theta), \pss_{\ol{H}}([M]))\le 0$.
\end{lem}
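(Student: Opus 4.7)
The idea is to deduce the inequality directly from the Poincar\'e duality identity for spectral invariants given by Proposition \ref{CCC}, together with the standard normalization $c([M], \mathrm{id}) = 0$.

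First, I would take $H = 0$, which generates $\mathrm{id}$; then $\ol H = 0$ as well, and every contractible $1$-periodic orbit is constant with action $0$. Applying Proposition \ref{CCC} to $a = \theta$ with this choice of $H$, one obtains
\[
c(\theta, \mathrm{id}) \;=\; -\inf\bigl\{\, c(b, \mathrm{id}) \,:\, b \in \qh_*(M, \ol\Lambda_{\bF_p, \mathrm{univ}}),\ \nu(\Delta(\pss_H(\theta), \pss_{\ol H}(b))) \le 0\,\bigr\}.
\]
By the hypothesis of the lemma, the class $b = [M]$ already lies in the set on the right-hand side, so it bounds the infimum from above, yielding
\[
c(\theta, \mathrm{id}) \;\ge\; -\,c([M], \mathrm{id}).
\]

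Next, I would note the standard fact $c([M], \mathrm{id}) = 0$: the fundamental class $[M]$ is the unit of $\qh_*(M)$, so $\pss_0([M])$ is the Floer-homological unit, represented at action level $0$ since every orbit of $H = 0$ is constant with $\mathcal{A}_0 = 0$. Substituting this into the previous inequality gives $c(\theta, \mathrm{id}) \ge 0$, which is the desired conclusion.

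The only subtlety is that Proposition \ref{CCC} was proven for non-degenerate Hamiltonians, whereas $H = 0$ is maximally degenerate. This is handled by a routine limiting argument using the stability property of spectral invariants: one approximates $0$ in the Hofer norm by a sequence of non-degenerate Hamiltonians $H_\varepsilon$ whose time-one maps lie in a small Hofer neighborhood of $\mathrm{id}$ in $\widetilde{\Ham}(M,\omega)$, applies Proposition \ref{CCC} at each level (with $b = [M]$ still satisfying the valuation hypothesis by continuity of the pairing), and passes to the limit, using $c([M], H_\varepsilon) \to c([M], \mathrm{id}) = 0$. I do not expect any genuine obstacle here; the content of the lemma really is just Poincar\'e duality plus the normalization of the unit.
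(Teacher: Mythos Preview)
Your argument is correct, but the paper proves the lemma more directly and avoids Proposition~\ref{CCC} (and hence the limiting argument) altogether. The paper simply expands $\theta=\sum_j\theta_j h_j$ with $h_j\in\h_*(M;\bF_p)$ and uses the identification $c(\theta,\id)=l(\theta)=\max_j\{-\nu(\theta_j)\}$; it then observes that the Poincar\'e pairing with $[M]$ picks out the $[pt]$-coefficient, so $\Delta(\pss_H(\theta),\pss_{\ol H}([M]))=\theta_{j_0}$ with $h_{j_0}=[pt]$, whence $\nu(\theta_{j_0})\geq -l(\theta)=-c(\theta,\id)$, and the hypothesis $\nu(\theta_{j_0})\leq 0$ gives $c(\theta,\id)\geq 0$ immediately.

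Your route packages the same ingredients---the Poincar\'e-type duality and the normalization at $\id$---at a higher level by invoking Proposition~\ref{CCC}. The cost is that Proposition~\ref{CCC} is stated for non-degenerate Hamiltonians, so you must pass through a perturbation and a limit, and you must also use (as does the paper, implicitly) that the induced pairing $\Delta(\pss_H(-),\pss_{\ol H}(-))$ on quantum homology is independent of $H$ so that the hypothesis on $[M]$ survives along the approximation. The paper's approach sidesteps this by working directly with the valuation $l$ on $\qh(M)$, which is where the spectral invariant at $\id$ lives anyway. Both proofs are short; the paper's is the more elementary and self-contained of the two.
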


\begin{proof}
Suppose that $\theta=\sum \theta_ih_i$ with $\theta_i\in\overline{\Lambda}_{\bF_p}$ and $h_i\in\h_*(M, \bF_{p})$. Then \[l(\theta)=\max\{-\nu(\theta_i)\}=-\min\{\nu(\theta_i)\}.\] Observe that $\Delta(\pss_H(\theta), \pss_{\ol{H}}([M]))=\theta_j$ with $h_j=[pt]$. Thus, \[\nu(\Delta(\pss_H(\theta), \pss_{\ol{H}}([M])))=\nu(\theta_j)\ge -l(\theta)=-c(\theta, 0, \overline{\Lambda}_{\bF_p}).\] Since $\nu(\Delta(\pss_{H}(\theta), \pss_{\ol{H}}([M])))\le 0$, we have that $c(\theta, 0, \overline{\Lambda}_{\bF_p})\ge 0$.
\end{proof}

\begin{lem}\label{EEE}
Let $\theta\in \qh_{ev}(M, \overline{\Lambda}_{\bF_p})$ and $[\ol{e}_{i}]_{p}*\theta\neq0$. Then we have that \[c(([\overline{e}_i]_p*\theta)^{-1}, 0, \overline{\Lambda}_{\bF_p})+c([\overline{e}_i]_p*\theta, 0, \overline{\Lambda}_{\bF_p})=2c([\overline{e}_i]_p, 0, \overline{\Lambda}_{\bF_p}),\] where the inversion is taken in the field $[\ol{e}_{i}]_{p}*\qh_{ev}(M,\ol\Lambda_{\bF_p})$.
\end{lem}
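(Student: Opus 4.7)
The plan is to reduce the claimed equality to a direct valuation computation by combining two observations: that spectral invariants at the identity coincide with the quantum filtration, and that each factor $F_i := [\ol{e}_i]_p * \qh_{ev}(M, \ol\Lambda_{\bF_p, univ})$ is one-dimensional over $\ol\Lambda_{\bF_p, univ}$.

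First I would verify the identification $c(\gamma, \id, \ol\Lambda_{\bF_p, univ}) = l(\gamma)$ for every non-zero $\gamma \in \qh_{ev}(M, \ol\Lambda_{\bF_p, univ})$, where $l$ denotes the non-Archimedean quantum filtration. This follows from a standard Morse perturbation argument: approximating the identity by $\phi_{\epsilon f}$ for a Morse function $f$, the Floer complex is identified via PSS with the Morse complex tensored with $\ol\Lambda_{\bF_p, univ}$, the action of a generator $(p, A)$ equals $\epsilon f(p) - \om(A)$, and the Morse differential does not mix Novikov powers. Hence the chain-level PSS image of $\gamma = \sum f_i h_i$ has minimal action filtration $\max_i\{\epsilon f(p_i) - \nu(f_i)\}$, converging to $\max_i\{-\nu(f_i)\} = l(\gamma)$ as $\epsilon \to 0$, and the stability of spectral invariants gives the identification in the limit.

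Next, by the remark immediately following Proposition \ref{AA}, $F_i$ is isomorphic to $\ol\Lambda_{\bF_p, univ}$ as an $\ol\Lambda_{\bF_p, univ}$-algebra, so it is one-dimensional and spanned by the non-zero element $[\ol{e}_i]_p$. Thus any non-zero $\alpha \in F_i$ can be written uniquely as $\alpha = \lambda\,[\ol{e}_i]_p$ for some $\lambda \in \ol\Lambda_{\bF_p, univ}^{\times}$, and its inverse in the field $F_i$ is $\alpha^{-1} = \lambda^{-1}[\ol{e}_i]_p$, since $(\lambda [\ol{e}_i]_p) * (\lambda^{-1} [\ol{e}_i]_p) = \lambda \lambda^{-1}\,[\ol{e}_i]_p * [\ol{e}_i]_p = [\ol{e}_i]_p$.

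Setting $\alpha = [\ol{e}_i]_p * \theta = \lambda [\ol{e}_i]_p$ and invoking property (2) of the non-Archimedean filtration from Section \ref{sec: Non-arch_filtration}, I then compute
\begin{align*}
l(\alpha) + l(\alpha^{-1}) = \bigl(l([\ol{e}_i]_p) - \ol\nu(\lambda)\bigr) + \bigl(l([\ol{e}_i]_p) - \ol\nu(\lambda^{-1})\bigr) = 2\,l([\ol{e}_i]_p),
\end{align*}
since $\ol\nu(\lambda) + \ol\nu(\lambda^{-1}) = \ol\nu(1) = 0$. Combined with the first step, this yields the desired identity. The key step is the first one: identifying the spectral invariant at the identity with the quantum filtration in the semipositive setting, which the stability property reduces to a routine computation for a Morse perturbation.
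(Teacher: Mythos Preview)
Your proof is correct and follows essentially the same route as the paper: write $[\ol{e}_i]_p*\theta=\lambda[\ol{e}_i]_p$ using one-dimensionality of $F_i$, observe that the inverse is $\lambda^{-1}[\ol{e}_i]_p$, and cancel $\ol\nu(\lambda)+\ol\nu(\lambda^{-1})=0$. The only difference is that your first step, identifying $c(\cdot,\id)=l(\cdot)$, is unnecessary here: the paper simply invokes the Novikov action property $c(\lambda\alpha,H)=c(\alpha,H)-\nu(\lambda)$, which is already recorded among the general properties of spectral invariants and holds for arbitrary $H$, so one can go directly from $c(\lambda[\ol{e}_i]_p,\id)+c(\lambda^{-1}[\ol{e}_i]_p,\id)$ to $2c([\ol{e}_i]_p,\id)$ without ever passing through the quantum filtration $l$.
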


\begin{proof}
Note that $[\overline{e}_i]_p*\qh_{ev}(M, \overline{\Lambda}_{\bF_p})\cong\overline{\Lambda}_{\bF_p}$. Thus, there is a $\kappa\in\overline{\Lambda}_{\bF_p}$ such that $[\overline{e}_i]_p*\theta=\kappa[\overline{e}_i]_p$. Hence, $([\overline{e}_i]_p*\theta)^{-1}=\kappa^{-1}[\overline{e}_i]_p$. We then obtain
\begin{align*}
	&c(([\overline{e}_i]_p*\theta)^{-1}, 0, \overline{\Lambda}_{\bF_p})+c([\overline{e}_i]_p*\theta, 0, \overline{\Lambda}_{\bF_p})\\
	&=c(\kappa^{-1}[\overline{e}_i]_p, 0, \overline{\Lambda}_{\bF_p})+c(\kappa[\overline{e}_i]_p, 0, \overline{\Lambda}_{\bF_p})\\
	&=c([\overline{e}_i]_p, 0, \overline{\Lambda}_{\bF_p})-\nu(\kappa^{-1})+c([\overline{e}_i]_p, 0, \overline{\Lambda}_{\bF_p})-\nu(\kappa)\\
	&=2c([\overline{e}_i]_p, 0, \overline{\Lambda}_{\bF_p}).
\end{align*}
\end{proof}

\begin{prop}
\label{prop:gamma_adapted_bound}
There exists a constant $D$, independent of $p$ for sufficiently large $p$, such that $\gamma_{E_p}(\phi)\le D$ for each $\phi\in \Ham(M, \omega)$, where $E_p=\{[\bar{e}_1]_p, \cdots, [\bar{e}_m]_p\}$.
\end{prop}
\begin{proof}
We note that, by the Hofer continuity of spectral invariants, it is sufficient to prove the statement for non-degenerate Hamiltonians. 
First, by Proposition \ref{CCC}, 
\begin{align*}
    &c([\bar{e}_i]_p, H)+c([\bar{e}_i]_p, \ol{H})\\
    &=c([\bar{e}_i]_p, H)-\inf\{c(b, H)\mid b\in \qh(M, \bar{\Lambda}_{\mathbb{F}_p}), \nu(\Delta(\text{PSS}_H(b), \text{PSS}_{\ol{H}}([\bar{e}_i]_p)))\le 0\}.
\end{align*}
By Lemma \ref{Pair_QH2}
\begin{align*}
    &c([\bar{e}_i]_p, H)+c([\bar{e}_i]_p, \ol{H})\\
    &=c([\bar{e}_i]_p, H)-\inf\left\{c(b, H)\middle| 
    \begin{matrix}
    b\in \qh(M, \bar{\Lambda}_{\mathbb{F}_p}),\\
    \nu(\Delta(\text{PSS}_H([M]), \text{PSS}_{\ol{H}}([\bar{e}_i]_p*b)))\le 0
    \end{matrix}
    \right\}\\
    & =\sup\left\{c([\bar{e}_i]_p, H)-c(b, H)\middle| 
    \begin{matrix}
    b\in \qh(M, \bar{\Lambda}_{\mathbb{F}_p}),\\
    \nu(\Delta(\text{PSS}_H([M]), \text{PSS}_{\ol{H}}([\bar{e}_i]_p*b)))\le 0
    \end{matrix}
    \right\}.
\end{align*}
The triangle inequality of spectral invariants implies 
\begin{align*}
    c([\bar{e}_i]_p, H) & \le c([\bar{e}_i]_p*b, H)+c(([\bar{e}_i]_p*b)^{-1}, 0)\\
     &\le c([\bar{e}_i]_p, 0)+c(b, H)+c(([\bar{e}_i]_p*b)^{-1}, 0),
\end{align*}
where the inverse $([\bar{e}_i]_p*b)^{-1}$ is taken in the field $[\bar{e}_i]_p* \qh_{ev}(M, \bar{\Lambda}_{\mathbb{F}_p})$. Since $c([\bar{e}_i]_p, 0)=l([\bar{e}_i]_p)$, it follows from Proposition \ref{BBB},
\begin{align*}
    c([\bar{e}_i]_p, H)-c(b, H)\le c(([\bar{e}_i]_p*b)^{-1}, 0)+\delta.
\end{align*}
By Lemma \ref{DDD}, 
\begin{align*}
    c([\bar{e}_i]_p, H)-c(b, H)\le c(([\bar{e}_i]_p*b)^{-1}, 0)+c([\bar{e}_i]_p*b, 0)+\delta,
\end{align*}
and by Lemma \ref{EEE}
\begin{align*}
    c([\bar{e}_i]_p, H)-c(b, H) & \le 2c([\bar{e}_i]_p, 0)+\delta\\
    & =2l([\bar{e}_i]_p)+\delta\\
    & \le 3\delta.
\end{align*}
\end{proof}

The following proposition relates the boundary depth to $\gamma_{E_p}$.

\begin{prop}
\label{prop:interleaving}
For sufficiently large $p$, 
\begin{align*}
    |\beta(\phi, \bar{\Lambda}_{\mathbb{F}_p})-\beta(\psi, \bar{\Lambda}_{\mathbb{F}_p})|\le \gamma_{E_p}(\phi\psi^{-1}, \bar{\Lambda}_{\mathbb{F}_p})+\delta
\end{align*}
where $\beta(\phi, \bar{\Lambda}_{\mathbb{F}_p})$ and $\beta(\psi, \bar{\Lambda}_{\mathbb{F}_p})$ are boundary depth for $\phi, \psi\in \Ham(M, \omega)$ and $\delta$ is a constant independent of $p$ for sufficiently large $p$.
\end{prop}
\begin{proof}
We again note that by the Hofer continuity of spectral invariants and of the boundary depth, it is enough to prove the statement for non-degenerate Hamiltonians. 

Recall from Section \ref{sec:persistence} that a persistence module $V$ over a field $\mathbb{K}$ is a collection of functions $a\mapsto V^a$ from the poset category of $\mathbb{R}$ to the category of vector spaces over $\mathbb{K}$.

For each $[\bar{e}_i]_p$, there is a morphism given by taking product with $[\bar{e}_i]_p$
\begin{align*}
    [\bar{e}_i]_p*: \text{HF}_{ev}(H,\bar{\Lambda}_{\mathbb{F}_p})\to\text{HF}_{ev}(H,\bar{\Lambda}_{\mathbb{F}_p})[l([\bar{e}_i]_p)].
\end{align*}
Denote the image of the morphism  by $Im([\bar{e}_i]_p*)$. 
As in \cite[Lemma 28]{shelukhin2022hofer}, we have the following:
\begin{lem}\label{interleave}
    The persistence modules $\hf_{ev}(H,\bar{\Lambda}_{\mathbb{F}_p})$ and $\displaystyle\bigoplus_{i=1}^mIm([\bar{e}_i]_p*)$ are $l(E_p)$-interleaved where $$l(E_p)=\displaystyle\max_{1\le i\le m}\left\{l([\bar{e}_i]_p)\right\}.$$ In particular, their interleaving distance is at most $l(E_p)$.
\end{lem}
%In great generality, one can define the boundary depth $\beta({V})$ of the persistence module ${V}$ as the infimum of all $\lambda\in(0, \infty)$ with the property that, for all $s\in\mathbb{R}$,
%\begin{align*}
%    \ker\left(V^s\to\lim_{\longrightarrow}V^t\right)=\ker\left(V^s\to V^{s+\lambda}\right)
%\end{align*}
%Note the set on the left hand side is the same as the ascending union $$\displaystyle\bigcup_{t\in[s, \infty)}\ker(V^s\to V^t).$$ Depending on $V$, there may be no $\lambda$ with this property, in which case $\beta({V})=\infty$. However, as in \cite{usher2016persistent}, the Floer homology persistence module $\text{HF}(H)$ can easily be checked to have $\beta(\phi_H^1, \bar{\Lambda}_{\mathbb{F}_p})$ equal to the length of the longest finite-length bar in the barcode (or zero if there are no finite-length bars). 

Note also that by Proposition \ref{prop:sum_boundary} the finite collection of persistence modules $\left\{Im([\bar{e}_i]_p*)\right\}$ satifies:
\begin{align*}
    \beta(\bigoplus_jIm([\bar{e}_i]_p*))=\max_{j}\beta\left(Im([\bar{e}_i]_p*)\right).
\end{align*}
Furthermore, Proposition \ref{prop:boundary-depth_stability} and Lemma \ref{interleave} we have the following:
\begin{lem}\label{finitedepth}
    The boundary depth $\beta(\displaystyle\bigoplus_jIm([\bar{e}_i]_p*))$ is finite and 
    \begin{align*}
        |\beta(\phi_H^1, \bar{\Lambda}_{\mathbb{F}_p})-\beta(\bigoplus_{j}Im([\bar{e}_i]_p*))|\le 2l(E_p)
    \end{align*}
\end{lem}
Thus $\beta(\phi_H^1, \bar{\Lambda}_{\mathbb{F}_p})\le \max\beta\left(Im([\bar{e}_i]_p*)\right)+2l(E_p)$. Next, we bound $\beta\left(Im([\bar{e}_i]_p*)\right)$.\\
Let $F$ and $G$ be Hamiltonians such that $\phi_F^1=\phi$ and $\phi_G^1=\psi$. Define $Im\left([\bar{e}_i]_p, F\right)$ as the image of the map
\begin{align*}
    [\bar{e}_i]_p*: \text{HF}(F,\bar{\Lambda}_{\mathbb{F}_p})\to \text{HF}(F,\bar{\Lambda}_{\mathbb{F}_p})[l([\bar{e}_i]_p)]
\end{align*}
and $Im\left([\bar{e}_i]_p, G\right)$ as the image of the map
\begin{align*}
    [\bar{e}_i]_p*: \text{HF}(G,\bar{\Lambda}_{\mathbb{F}_p})\to \text{HF}(G,\bar{\Lambda}_{\mathbb{F}_p})[l([\bar{e}_i]_p)]
\end{align*}
Following the proof of \cite[Proposition 12]{shelukhin2022hofer}, $Im\left([\bar{e}_i]_p, F\right)$ and $Im\left([\bar{e}_i]_p, G\right)$ are $$\left(\dfrac{1}{2}\gamma_{[\bar{e}_i]_p}(G\#\overline{F}, \bar{\Lambda}_{\mathbb{F}_p})+\dfrac{1}{2}\epsilon\right)$$ -interleaved for arbitrarily small $\epsilon$.\\
By Lemma \ref{finitedepth}, $\beta\left(Im\left([\bar{e}_i]_p, F\right)\right)$ and $\beta\left(Im\left([\bar{e}_i]_p, G\right)\right)$ are finite and 
\begin{align*}
    \left|\beta(\phi_F^1, \bar{\Lambda}_{\mathbb{F}_p})-\max_i\beta\left(Im\left([\bar{e}_i]_p, F\right)\right)\right|\le 2l(E_p)\\
    \left|\beta(\phi_G^1, \bar{\Lambda}_{\mathbb{F}_p})-\max_i\beta\left(Im\left([\bar{e}_i]_p, G\right)\right)\right|\le 2l(E_p)
\end{align*}
Then 
\begin{align*}
    \left|\max_i\beta\left(Im\left([\bar{e}_i]_p, F\right)\right)-\max_i\beta\left(Im\left([\bar{e}_i]_p, G\right)\right)\right|\le\gamma_{[\bar{e}_i]_p}(G\#\overline{F}, \bar{\Lambda}_{\mathbb{F}_p})+\epsilon 
\end{align*}
Now the result in Proposition \ref{prop:interleaving} easily follows.
\end{proof}

\begin{thm}
\label{thm:upper_closure}
Suppose that $\qh_{ev}(M, \Lambda_{\mathbb{K}})$ is semisimple, where the ground field $\mathbb{K}$ has characteristic $0$. Then, the boundary depth of each $\psi\in \Ham(M, \omega)$ satisfies $\beta\left(\psi, \bar{\Lambda}_{\mathbb{F}_p}\right)\le C$, where $C$ is independent of $p$. 
\end{thm}
\begin{proof}
The result follows from Proposition \ref{prop:gamma_adapted_bound} and Proposition \ref{prop:interleaving}. From the proof of Proposition \ref{prop:gamma_adapted_bound} and Proposition \ref{BBB}, one can see that the constant $C$ only depends on the idempotents of $\text{QH}_{ev}(M, \Lambda_{\mathbb{K}})$.
\end{proof}

\begin{proof}[Proof of Theorem \ref{thm:upper_bound}]
Since $\beta\left(\psi, \bar{\Lambda}_{\mathbb{F}_p}\right)=\beta\left(\psi, \Lambda_{\mathbb{F}_p}\right)$, the conclusion follows from Theorem \ref{thm:upper_closure}. 
\end{proof}

 \section{The $\Z_p$-equivariant Floer homology}\label{sec: smith}
This section is a combination of \cite{shelukhin2022hofer, shelukhin2021mathbb, seidel2015equivariant, sugimoto2021hofer}. We mainly follow the ideas of Sugimoto in \cite{sugimoto2021hofer} to extend the definition of the $\Z_p$-equivariant pair-of-paints product introduced in \cite{shelukhin2022hofer} to the semipositive setting.

%We note that the definition of $\mathbb{Z}_p$-equivariant Floer homology in \cite[Section~6.4]{sugimoto2021hofer} only requires $\phi$ to have isolated fixed points. However, in what follows, we use \cite[Section~6.4]{sugimoto2021hofer} in the nondegenerate case, and use \cite[Section~7]{shelukhin2022hofer} for the more general case that only assumes isolated fixed points, for the purpose of proving the Smith-type inequality.
\subsection{The $\Z_p$-equivariant Floer homology of $\cf(\phi, \lamzero_{\bF_{p}})^{\otimes p}$}
Suppose $H$ is a nondegenerate Hamiltonian generating $\phi$. We proceed as in \cite[Section~7.1]{shelukhin2022hofer} to construct an $X_\infty$-module that yield $\h(\Z_p,\cf(\phi,\lamzero_{\bF_p})^{\otimes p})$. The modified Floer differential 
\begin{equation*}
	 d_{\Z_p}:\cf(H,\lamzero_{\bF_p})^{\otimes p}\otimes\lamzero_{\bF_p}[[u]]\brat{\theta}\rightarrow\cf(H,\lamzero_{\bF_p})^{\otimes p}\otimes\lamzero_{\bF_p}[[u]]\brat{\theta}
\end{equation*}
is a $\lamzero_{\bF_p}[[u]]\brat{\theta}$ of 
\begin{align*}
	d_{\Z_p}({x})&=d^{(p)}{x}+(1-\tau){x}\otimes\theta\\
	d_{\Z_p}(x\otimes\theta)&=-d^{(p)}{x}\otimes\theta+(1+\tau+\cdots+\tau^{p-1})x\otimes u
\end{align*}

where $x={x}_1\otimes\cdots\otimes{x}_p$, $d^{(p)}$ is the naturally induced differential on $\cf(\phi, \lamzero_{\bF_{p}})^{\otimes p}$ and $$\tau({x}_1\otimes\cdots\otimes{x}_p)=(-1)^{|x_{p}|(|x_1|+\cdots+|x_{p-1}|)}x_p\otimes{x}_1\otimes\cdots\otimes{x}_{p-1}.$$ Then, $d_{\Z_p}$ determines an $X_\infty$-module structure on \[\cf(H,\lamzero_{\bF_p})^{\otimes p}\otimes\lamzero_{\bF_p}\brat{\theta},\] which is also an $\epsilon$-gapped $X_{\infty}$-module as in \cite[Section~6.4]{sugimoto2021hofer}. Finally we define $\text{H}(\mathbb{Z}_p, \text{CF}(\phi, \Lambda_{\mathbb{F}_p}^0)^{\otimes p})$ as the homology of $(\text{C}(\mathbb{Z}_p, \text{CF}(\phi, \Lambda_{\mathbb{F}_p}^0)^{\otimes p}), d_{\Z_p})$.

%Note that $(C_{K,k},d_{\Z_p})$ and $(C_{K,l},d_{\Z_p})$ are chain homotopy equivalent and, hence, $X_\infty$-homotopy equivalent, for any $k$ and $l$. We define $\iota_{K, k\to l}$ as this chain homotopy and $\tau_{K\to K+1, k}$ as the constant morphism. Thus, we have a directed family of $\epsilon$-gapped $X_k$-modules $$\{C_{K, k}, \iota_{K, k\to l}, \tau_{K\to K+1, k}\}.$$ By \cite[Proposition~44]{sugimoto2021hofer}, there is a unique $\epsilon$-gapped $X_\infty$-module $(C, \mathfrak{d})$ up to $\epsilon$-gapped $X_\infty$-homotopy equivalence. By the proof of \cite[Proposition~44, Proposition~42]{sugimoto2021hofer}, one can take $C=C_{L, i}$ for a fixed $(L, i)$. 
\begin{remark}
For the definitions of $\epsilon$-gapped $X_K$-modules and $\epsilon$-gapped $X_K$-morphisms, one can refer to \cite[Section 6.2]{sugimoto2021hofer}. For the definitions of directed families of $\epsilon$-gapped $X_K$-modules and direct families of $\epsilon$-gapped $X_K$-morphisms, one can refer to \cite[Section 6.3]{sugimoto2021hofer}.
\end{remark}

%\begin{remark}
%Since we assume that $\phi$ is nondegenerate, we can take $\{H_k\}$ to be a constant sequence, i.e. $H_k=H$ for any $k$.
%\end{remark}
 
\subsection{The $\Z_p$-equivariant Floer homology of $\cf(\phi^p, \lamzero_{\bF_{p}})$}
\subsubsection{Nondegenerate case}\label{sec:nondeg_equiv}
Suppose $\phi$ is a nondegenerate Hamiltonian diffeomorphism generated by $H$. Let $f$ be a $\Z_p$-invariant Morse function on $S^\infty$, where the $\Z_p$-action on $S^\infty$ is given by scalar multiplication by the $p$-th root of unity. For each degree $k\in\mathbb{N}$, there are $p$ critical points denoted by $Z_k^m$, $m\in\{0, 1, \cdots, p-1\}$. The critical points contained in $S^{2k-1}$ are $\{Z_j^m\}$ with $j\in\{0, 1, \cdots, 2k+1\}$ and $m\in\{0, 1,\cdots, p-1\}$. Fix an almost complex structure $J$ and recall that $H^{(p)}$ is a generator of $\phi^p$. We now proceed as in \cite{sugimoto2021hofer} to construct the relevant $\epsilon$-gapped $X_K$-modules. %Choose $\epsilon>0$ to be less than the minimal symplectic area of a $J$-holomorphic sphere and the minimal energy of a non-constant Floer cylinder with respect to $H^{(p)}$. %Let $\{G_{(K,i)}\}$ be a family of Hamiltonian functions such that $G_{K,i}$ is sufficiently close to $H^{(p)}$, $G_{(K,i)}\rightarrow H^{(p)}$ in the $C^{\infty}$-topology as $i$ tends to infinity, and $(G_{(K,i)},J)$ is a regular pair. 
Furthermore, let $\cH^{(K,i)}$ be a family of Hamiltonian functions parametrized by $(w,t)\in S^{2K+1}\times S^1$ satisfying:
\begin{enumerate}[label=(\roman*)]
	\item For all $w$ in a small neighbourhood of $\{Z_{i}^m\}$, $\cH^{(K,i)}_{w,t}(x)=H^{(p)}(t-m/p,x)$,
	\item For all $m\in\Z_p$ and $w\in S^\infty$,  $\cH^{(K,i)}_{m\cdot w,t}=\cH^{(K,i)}_{w,t-m/p}$,
	\item $\cH^{(K,i)}$ is invariant under shift by $\tau$, i.e.  $\cH^{(K,i)}_{\tau(w),t}=\cH^{(K,i)}_{w,t}$.
\end{enumerate}
Let $x,y$ be $1$-periodic orbits of $H^{(p)}$, $m\in\Z_p$, $\lambda\geq 0$, $\alpha\in\{0,1\}$ and $0\leq l\leq 2K+1$. Consider the following perturbed Cauchy-Riemann equation
\begin{equation*}
\begin{cases}
\overbar{\partial}_Ju+X_{\cH^{(K,i)}_{t, w(s)}}(u)^{0, 1}=0\\
\partial_sw+\nabla f(w)=0
\end{cases}
\end{equation*}
subject to
\begin{equation*}
\begin{cases}
\lim_{s\to-\infty}(u(s, t), w(s))=(x(t), Z_\alpha^0)\\
\lim_{s\to+\infty}(u(s, t), w(s))=(y(t-m/p), Z_l^m).
%\int_{u}\om+\int^1_0 H^{(p)}(t,{x}(t))-H^{(p)}(t,{y}(t))dt =0.
\end{cases}
\end{equation*}
Counting solutions to the above perturbed Cauchy-Riemann equation allows one to define a map
\begin{equation*}
	d^{(K,i)}_{\alpha,l}:\cf(H^{(p)}, \lamzero_{\bF_{p}})\rightarrow\cf(H^{(p)}, \lamzero_{\bF_{p}}),
\end{equation*}
which induces an $\epsilon$-gapped $X_K$-module $(\cf(H^{(p)}, \lamzero_{\bF_{p}})\otimes\lamzero_{\bF_p}\brat{\theta}, \{\delta^{(K,i)}_l\}_{l=0}^K)$, where
\begin{align*}
	\delta^{(K,i)}_l({x}\otimes 1)&=d^{(K,i)}_{0,2l}({x})\otimes1 + d^{(K,i)}_{0,2l+1}({x})\otimes\theta\\
	\delta^{(K,i)}_l({x}\otimes\theta)&=d^{(K,i)}_{1,2l}({x})\otimes1 + d^{(K,i)}_{1,2l+1}({x})\otimes\theta.
\end{align*}
In particular, $\delta^{(K,i)}\circ\delta^{(K,i)}=0\mod (u^{K+1})$, where
\begin{equation*}
	\delta^{(K,i)}=\delta^{(K,i)}_0+\delta^{(K,i)}_1\otimes u + \cdots+\delta^{(K,i)}_K\otimes u^K.
\end{equation*}
We next define $\epsilon$-gapped $X_K$-morphisms \[\iota_{(K,i\rightarrow j)}:(\cf(H^{(p)}, \lamzero_{\bF_{p}})\otimes\lamzero_{\bF_p}\brat{\theta},\delta^{(K,i)})\rightarrow(\cf(H^{(p)}, \lamzero_{\bF_{p}})\otimes\lamzero_{\bF_p}\brat{\theta},\delta^{(K,j)})\] as follows. Consider a family $\cH^{(K,i\rightarrow j)}_{s,w,t}$ of Hamiltonian functions connecting $\cH^{(K,i)}_{w,t}$ to $\cH^{(K,j)}_{w,t}$, which is parametrized by $\R\times S^{2K+1}\times S^{1}$ and satisfies:
\begin{enumerate}[label=(\roman*)]
	\item For $s\ll0$, $\cG^{(K,i\rightarrow j)}_{s,w,t}(x)=\cH^{(K,i)}_{w,t}(x)$, and for $s\gg0$, $\cH^{(K,j)}_{s,w,t}=\cH^{(K,j)}_{w,t}$.
	\item For all $m\in \Z_p$ and $w\in S^\infty$,  $\cH^{(K,i\rightarrow j)}_{s,m\cdot w,t}=\cH^{(K,i\rightarrow j)}_{s,w,t-m/p}$,
	\item $\cH$ is invariant under shift by $\tau$, i.e.  $\cH^{(K,i\rightarrow j)}_{s,\tau(w),t}=\cH^{(K,i\rightarrow j)}_{s,w,t}$.
\end{enumerate}
Let $x,y$ be $1$-periodic orbits of $H^{(p)}$, $m\in\Z_p$, $\lambda\geq 0$, $\alpha\in\{0,1\}$ and $0\leq l\leq 2K+1$. Define $\iota_{\alpha,l,(K,i\rightarrow j)}$ by counting solutions to the following perturbed Cauchy-Riemann equation
\begin{equation*}
\begin{cases}
\overbar{\partial}_Ju+X_{\cH^{(K,i\rightarrow j)}_{s,w(s), t}}(u)^{0, 1}=0\\
\partial_sw+\nabla f(w)=0
\end{cases}
\end{equation*}
subject to
\begin{equation*}
\begin{cases}
\lim_{s\to-\infty}(u(s, t), w(s))=(x(t), Z_\alpha^0)\\
\lim_{s\to+\infty}(u(s, t), w(s))=(y(t-m/p), Z_l^m).
%\int_{u}\om+\int^1_0 H^{(p)}(t,{x}(t))-H^{(p)}(t,{y}(t))dt =\lambda.
\end{cases}
\end{equation*}
Set,
\begin{align*}
	\iota_{(K,i\rightarrow j),l}({x}\otimes 1)&=\iota_{0,2l,(K,i\rightarrow j)}({x})\otimes1 + \iota_{0,2l+1,(K,i\rightarrow j)}({x})\otimes\theta\\
	\iota_{(K,i\rightarrow j),l}({x}\otimes\theta)&=\iota_{1,2l,(K,i\rightarrow j)}({x})\otimes1 + \iota_{1,2l+1,(K,i\rightarrow j)}({x})\otimes\theta.
\end{align*}
It follows from \cite[Section 6]{sugimoto2021hofer} that $\iota_{(K,j\rightarrow k)}\circ\iota_{(K,i\rightarrow j)}$ is $\epsilon$-gapped homotopic to $\iota_{(K,i\rightarrow k)}$. In a similar fashion, it is possible to define $\epsilon$-gapped $X_K$-morphisms  \[\tau_{(K\rightarrow K+1,i)}:(\cf(H^{(p)}, \lamzero_{\bF_{p}})\otimes\lamzero_{\bF_p}\brat{\theta},\delta^{(K,i)})\rightarrow(\cf(H^{(p)}, \lamzero_{\bF_{p}})\otimes\lamzero_{\bF_p}\brat{\theta},\delta^{(K+1,i)}).\] We note that $i$ has to be sufficiently large so that $\tau_{(K\rightarrow K+1,i)}$ is defined over $\lamzero_{\bF_p}$. It turns out that \[\bigg\{\cf(H^{(p)}, \lamzero_{\bF_{p}})\otimes\lamzero_{\bF_p}\brat{\theta},\,\,\iota_{(K,i\rightarrow j)},\,\,\tau_{(K\rightarrow K+1,i)}\bigg\}\] is a directed family of $X_K$-modules. By \cite[Proposition 44]{sugimoto2021hofer}, there is a unique $\epsilon$-gapped $X_\infty$-module $(D, \mathfrak{l})$ up to $\epsilon$-gapped $X_\infty$-homotopy equivalence. Finally we define the chain complex $\text{C}(\mathbb{Z}_p, \text{CF}(\phi^p, \Lambda_{\mathbb{F}_p}^0))$ as $D$ and $\text{H}(\mathbb{Z}_p, \text{CF}(\phi^p, \Lambda_{\mathbb{F}_p}^0))$ as the homology of $(D, \mathfrak{l})$.

\subsubsection{degenerate case}\label{sec:deg_equiv}
Assume that $\phi$ has finitely many isolated fixed points. Now we follow \cite[Section 7.4.1]{shelukhin2022hofer} to define a canonical $\Lambda_{\mathbb{F}_p}^0$-complex. For $H_1$ a sufficiently $C^2$-small nondegenerate perturbation of $H$, then as in Section \ref{sec:nondeg_equiv}, we have an $\epsilon$-gapped $X_\infty$-module $$(\text{CF} (H_1^{(p)}, \Lambda_{\mathbb{F}_p}^0 )\otimes\Lambda_{\mathbb{F}_p}^0\langle\theta\rangle, \{\delta_l\}_{l\ge 0}).$$ Define $\delta_{\infty};=\displaystyle\sum_{l=0}^\infty\delta_l\otimes u^l$. Then $$ (\text{CF} (H_1^{(p)}, \Lambda_{\mathbb{F}_p}^0)\otimes\Lambda_{\mathbb{F}_p}^0[u^{-1}, u]]\langle\theta\rangle, \delta_{\infty})$$ is a chain complex. By the $\epsilon$-gapped condition, $\delta_l=\delta_{l, loc}+T^{\epsilon}\delta_{l, \epsilon}$. Thus $\delta_{\infty}$ is of the form 
\begin{align*}
    \delta_{\infty}=\delta_{loc}+T^\epsilon\delta_{\epsilon}
\end{align*}
where $\delta_{loc}=\displaystyle\sum_{l=0}^\infty\delta_{l, loc}\otimes u^l$ is the differnetial on $\text{CF}^{loc} (H_1^{(p)}, \Lambda_{\mathbb{F}_p}^0)\otimes\Lambda_{\mathbb{F}_p}^0[u^{-1}, u]]\langle\theta\rangle$ with $$\text{CF}^{loc}(H_1^{(p)}, \Lambda_{\mathbb{F}_p}^0):=\displaystyle\bigoplus_{x\in\text{Fix}(\phi^p)}\text{CF}(H_1^{(p)}, x).$$ Recall 
\begin{align*}
    \delta_l({x}\otimes 1)=d_{0, 2l}({x})\otimes 1+d_{0, 2l+1}({x})\otimes\theta\\
    \delta_l({x}\otimes\theta)=d_{1, 2l}({x})\otimes 1+d_{1, 2l+1}({x})\otimes\theta\\
\end{align*}
Then 
\begin{align*}
    \delta_{\infty}({x}\otimes 1) &=d_F({x})\otimes 1+d_{0, 1}({x})\otimes\theta+ud_{0, 2}({x})\otimes 1+ud_{0,3}({x})\otimes\theta+\cdots\\
    \delta_{\infty}({x}\otimes\theta) &=d_F({x})\otimes\theta+ud_{1,2}({x})\otimes 1+ud_{1,3}({x})\otimes\theta+\cdots
\end{align*}
where $d_F$ is the Floer differential on $\text{CF}(H_1^{(p)}, \Lambda_{\mathbb{F}_p}^0)$. Then define
\begin{align*}
    d_0({x}\otimes 1) &:=d_F({x})\otimes 1\\
    d_0({x}\otimes\theta) &:=d_F({x})\otimes\theta\\
    d_1({x}\otimes 1) &:=d_{0, 1}({x})\otimes\theta\\
    d_1({x}\otimes\theta) &:=ud_{1,2}({x})\otimes 1\\
\end{align*}
Now $d_0+d_1$ defines the differential $d_{cone}$ on
\begin{align*}
    cone(d_{0,1}: \text{CF}(H_1^{(p)}, \Lambda_{\mathbb{F}_p}^0)\to \text{CF}(H_1^{(p)}, \Lambda_{\mathbb{F}_p}^0))\cong\text{CF}(H_1^{(p)}, \Lambda_{\mathbb{F}_p}^0)\otimes 1\oplus\text{CF}(H_1^{(p)}, \Lambda_{\mathbb{F}_p}^0)\otimes\theta
\end{align*}
with 
\begin{align*}
    d_{cone}({x}_0, {x}_1)=\left(d_F({x}_0), -d_F({x}_1)+d_{0, 1}({x}_0)\right)
\end{align*}
First we construct a canonical $\Lambda_{\mathbb{F}_p}^0$-complex for the cone as follows. By the crossing energy argument, write $d_F=d_{loc}+T^{\epsilon_0}D_d$ where $D_d$ is a $\Lambda_{\mathbb{F}_p}^0$-morphism. Let $X$ be the part of a $\Lambda_{\mathbb{F}_p}^0$-basis of $\text{CF}^{loc}(H_1^{(p)}, \Lambda_{\mathbb{F}_p}^0)$ giving the free part of the homology. The homological perturbation formula gave us a differential $\overline{d}$ on $X$, and $(X, \overline{d})$ is $\delta_0$-quasi-equivalent to $(\text{CF}(H_1^{(p)}, \Lambda_{\mathbb{F}_p}^0), d_F)$. Furthermore $(X\oplus X, \overline{d}\oplus-\overline{d})$ is $\delta_0$-quasi-equivalent to $(\text{CF}(H_1^{(p)}, \Lambda_{\mathbb{F}_p}^0)\oplus \text{CF}(H_1^{(p)}, \Lambda_{\mathbb{F}_p}^0), d_F\oplus-d_F)$. 

Applying the homological perturbation formula again to $$(cone(d_{0,1}), d_{cone}=(d_F\oplus-d_F)+\widetilde{d}_{0,1})$$ and $(X\oplus X, \overline{d}\oplus-\overline{d})$ where $\widetilde{d}_{0, 1}({x}_0, {x}_1)=(0, d_{0, 1}({x}_0))$, we get a differential $\overline{d}_{cone}$ on $X\oplus X$, and $(X\oplus X, \overline{d}_{cone})$ is $2\delta_0$-quasi-equivalent to $(cone(d_{0,1}), d_{cone})$. 

On $cone(d_{0, 1})\otimes \Lambda_{\mathcal{K}}^0$ with $\mathcal{K}:=\mathbb{F}_p[u^{-1}, u]]$, $\delta_{\infty}=d_{cone}+uE_{loc}+T^\epsilon uE$. Applying the homological perturbation formula to $(cone(d_{0,1})\otimes\Lambda_{\mathcal{K}}^0, \delta_{\infty}=d_{cone}+uE_{loc}+T^\epsilon uE)$ and $((X\oplus X)\otimes\Lambda_{\mathcal{K}}^0, \overline{d}_{cone})$, we get a differential $\overline{\delta}_{\infty}$ on $(X\oplus X)\otimes\Lambda_{\mathcal{K}}^0$, and $((X\oplus X)\otimes\Lambda_{\mathcal{K}}^0, \overline{\delta}_\infty)$ is $3\delta_1$-quasi-equivalent to $(cone(d_{0,1})\otimes\Lambda_{\mathcal{K}}^0, \delta_{\infty})$ with $\delta_1:=(4n+2)\delta_0$. Finally we define $\text{C}(\mathbb{Z}_p, \text{CF}(\phi^p, \Lambda_{\mathbb{F}_p}^0))$ as $(X\oplus X)\otimes\Lambda_{\mathcal{K}}^0$ with differential $\overline{\delta}_\infty$, and it is $3\delta_1$-quasi-equivalent to $\text{C}(\mathbb{Z}_p, \text{CF}(\phi_1^p, \Lambda_{\mathbb{F}_p}^0))$.

\subsection{The $\Z_p$-equivariant pair-of-pants product}
Suppose $\phi$ is a nondegenerate Hamiltonian diffeomorphism generated by $H$. The $\Z_p$-equivariant pair-of-pants product is an $\epsilon$-gapped $X_\infty$-morphism induced by the $\epsilon$-gapped $X_K$-morphism
\begin{equation*}
	\mathfrak{f}^{(K,i)}:\cf(H,\lamzero_{\bF_p})^{\otimes p}\otimes\lamzero_{\bF_p}\brat{\theta}\rightarrow\cf(H^{(p)}, \lamzero_{\bF_{p}})\otimes\lamzero_{\bF_p}\brat{\theta}
\end{equation*}
between the two directed families of $\epsilon$-gapped morphisms that were discussed above. Therefore, we obtain a map 
\begin{equation*}
	\h(\Z_p,\cf(\phi,\lamzero_{\bF_p})^{\otimes p})\ra\h(\Z_p,\cf(\phi^p,\lamzero_{\bF_p}))
\end{equation*}
as desired. We proceed to recall the construction of $\mathfrak{f}^{(K,i)}$. Set
\begin{equation*}
	\Sigma_p=\Bigg(\bigsqcup_{0\leq k\leq p-1}\R\times[k,k+1]\Bigg)/\sim,
\end{equation*}
where the equivalence relation is given by
\begin{enumerate}[label=(\roman*)]
	\item For all $0\leq k\leq p-1$, $(s,k)\in[0,\infty)\times[k-1,k]\sim(s,k)\in[0,\infty)\times[k,k+1]$,\\
	\item for all $0\leq k\leq p-1$, $(s,k)\in(-\infty,0]\times[k,k+1]\sim(s,k+1)\in(-\infty,0]\times[k,k+1]$,\\
	\item and $(s,0)\in[0,\infty)\times[0,1]\sim(s,p)\in[0,\infty)\times[p-1,p]$.
\end{enumerate}
Determine a complex structure near the point $[(0,0)]\in\Sigma_p$, which is in the image of the following coordinate chart $w:B(1/2)\subset\C\rightarrow\Sigma_p$ defined as
\begin{equation*}
w(z)=
\begin{cases}
	[z^p], \qquad &0\leq\arg(z)\leq\pi/p,\\
	[z^p+k\sqrt{-1}],  &(2k-1)\pi/p\leq\arg(z)\leq(2k+1)\pi/p,\\
	[z^p+p\sqrt{-1}],  &(2p-1)\pi/p\leq\arg(z)\leq2\pi.
\end{cases}
\end{equation*}
Assume that $H(t,x)=0$ near $t=0$ and let $\cK^{(K,i)}$ be a family of Hamiltonians parametrized by $(w,z)\in S^{2K+1}\times\Sigma_p$ satisfying the following:
\begin{enumerate}[label=(\roman*)]
	\item For all $z=[s,t]\in\Sigma_p$, $\cK^{(K,i)}_{w,z}(x)=H([t],x)$ when $s\ll0$, and $\cK^{(K,i)}_{w,z}(x)=(1/p)H^{(p)}(x, t/p)$, when $s\gg0$,
	\item For all $z=[s,t]\in\Sigma_p$, $\cK^{(K,i)}_{w,z}\xrightarrow{i\rightarrow\infty}H(t,x)$,
	\item For all $m\in \Z_p$ and $w\in S^{2K+1}$,  $\cK^{(K,i)}_{m\cdot w,t}=\cK^{(K,i)}_{w,z+mi}$,
	\item $\cK$ is invariant under shift by $\tau$, i.e.  $\cK^{(K,i)}_{\tau(w),t}=\cK^{(K,i)}_{w,z}$.
\end{enumerate}
For $1$-periodic orbits $\{x_k\}_{k=1}^{p}$ and $y$ of $H$ and $H^{(p)}$, respectively, $m\in\Z_p$, $\alpha\in\{0,1\}$, and $0\leq l\leq 2K+1$, consider the following equation:
\begin{equation*}
\begin{cases}
\overbar{\partial}_Ju+X_{\cK^{(K,i)}_{w(s), [s,t]}}(u)^{0, 1}=0\\
\partial_sw+\nabla f(w)=0
\end{cases}
\end{equation*}
subject to
\begin{equation*}
\begin{cases}
\lim_{s\to-\infty}(u([s, t]), w(s))=(x_i(t), Z_\alpha^0)\\
\lim_{s\to+\infty}(u([s, t]), w(s))=(y(t-m/p), Z_l^m).
%\int_{{u}}\om+\int^1_0 H^{(p)}(t,{y}(t))dt-\sum_{i}H^{(p)}(t,{x_i}(t))dt =\lambda.
\end{cases}
\end{equation*}
Counting solutions to this equation, yields the map
\begin{equation*}
	f^{(K,i)}_{\alpha,l}:(\cf(H,\lamzero_{\bF_p})^{\otimes p}\otimes\lamzero_{\bF_p}\brat{\theta},d_{\Z_p})\rightarrow(\cf(H^{(p)},\lamzero_{\bF_{p}})\otimes\lamzero_{\bF_p}\brat{\theta},\delta^{(K,i)}).
\end{equation*}
Finally, one defines $\mathfrak{f}^{(K,i)}=\{f^{(K,i)}_{l}\}^K_{l=0}$ where
\begin{align*}
    f_l^{(K, i)}(({x}_1\otimes\cdots\otimes {x}_p)\otimes 1) &=f_{0, 2l}^{(K, i)}({x}_1\otimes\cdots\otimes {x}_p)\otimes 1+f_{0, 2l+1}^{(K, i)}({x}_1\otimes\cdots\otimes {x}_p)\otimes\theta\\
    f_l^{(K, i)}(({x}_1\otimes\cdots\otimes {x}_p)\otimes\theta) &=f_{1, 2l}^{(K, i)}({x}_1\otimes\cdots\otimes {x}_p)\otimes 1+f_{1, 2l+1}^{(K, i)}({x}_1\otimes\cdots\otimes {x}_p)\otimes\theta
\end{align*}

It follows from \cite[Lemma 52]{sugimoto2021hofer} that $\{\mathfrak{f}^{(K, i)}\}$ determines a morphism between the $\epsilon$-gapped directed family $$\{\text{CF}(H, \Lambda_{\mathbb{F}_p}^0)^{\otimes p}\otimes\Lambda_{\mathbb{F}_p}^0\langle\theta\rangle\}$$ and the $\epsilon$-directed family $$\{\text{CF}(H^{(p)}, \Lambda_{\mathbb{F}_p}^0)\otimes\Lambda_{\mathbb{F}_p}^0\langle\theta\rangle\}.$$ Then by \cite[Proposition 44]{sugimoto2021hofer}, there is an $\epsilon$-gapped $X_{\infty}$-morphism between the $\epsilon$-gapped $X_\infty$-modules $\text{C}(\mathbb{Z}_p, \text{CF}(\phi, \Lambda_{\mathbb{F}_p}^0)^{\otimes p})$ and $\text{C}(\mathbb{Z}_p, \text{CF}(\phi^p, \Lambda_{\mathbb{F}_p}^0))$. Finally by \cite[Lemma 16]{sugimoto2021hofer}, it induces an isomorphism between the homologies $\text{H}(\mathbb{Z}_p, \text{CF}(\phi, \Lambda_{\mathbb{F}_p}^0)^{\otimes p})$ and $\text{H}(\mathbb{Z}_p, \text{CF}(\phi^p, \Lambda_{\mathbb{F}_p}^0))$.

\begin{remark}
 The proof of \cite[Lemma 16]{sugimoto2021hofer} for the semipositive case is given on \cite[Page 80]{sugimoto2021hofer}.
\end{remark}

\subsection{The Smith type inequality for total bar-lengths}
With the equivariant pair-of-pants product defined, we can prove the following theorem in the same way as in \cite{shelukhin2022hofer}; we sketch the argument for the reader's convenience. 

\begin{thm}\label{thm: smith}
Let $\phi\in\Ham(M, \omega)$ be a Hamiltonian diffeomorphism of a closed semipositive symplectic manifold $(M, \omega)$. Suppose that $\fix(\phi^p)$ is finite. Then
\begin{center}
    $p\cdot\beta_{\mathrm{tot}}(\phi, \Lambda_{\bF_p})\le\beta_{\mathrm{tot}}(\phi^p, \Lambda_{\bF_p})$.
\end{center}
\end{thm}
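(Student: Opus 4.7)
The plan is to follow Shelukhin's proof in \cite{shelukhin2022hofer} essentially verbatim, once the $\Z_p$-equivariant pair-of-pants product has been set up in the semipositive setting (which is exactly what was done in the previous subsections using Sugimoto's work). The strategy is to reinterpret the bar-length invariants as torsion summands of filtered modules over $\lamzero_{\bF_p, univ}$, upgrade to the Tate-equivariant setting via the $X_\infty$-module structure already constructed, and then apply a localization argument to obtain a filtered isomorphism between the two sides, whose bar-lengths I can compute purely algebraically.

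First, I would organize the filtered complexes $\cf(\phi, \lamzero_{\bF_p, univ})^{\otimes p}$ (with the cyclic $\Z_p$-action by permutation of tensor factors) and $\cf(\phi^p, \lamzero_{\bF_p, univ})$ as filtered $X_\infty$-modules, and pass to the associated Tate-equivariant persistence modules over $\lamzero_{\bF_p, univ}((u))$, whose torsion summands encode equivariant bar-length spectra. Second, I would invoke the Smith/localization principle for the equivariant pair-of-pants product $\mathfrak{f}^{(K,i)}$: after inverting $u$, the map induces a filtration-controlled quasi-isomorphism between the two Tate theories. Shelukhin's proof of this rests on a neck-stretching / Morse-Bott degeneration showing that, after $u$-localization, the only surviving moduli contributions come from constant-in-$w$ configurations corresponding to $p$-periodic orbits; the $\epsilon$-gapped $X_K$-module structure and Sugimoto's compactness/transversality package allow the identical argument to go through in the semipositive case, since the energy gap $\epsilon$ prevents sphere-bubbling from interfering with the relevant moduli spaces.

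Third, a purely algebraic computation (unchanged from the monotone case, since the cyclic action on the $p$-fold tensor product is algebraic) shows that the finite bar-length spectrum of the $u$-localized Tate homology of $\cf(\phi, \lamzero_{\bF_p, univ})^{\otimes p}$ under cyclic $\Z_p$-action equals $\{p\cdot\beta_j(\phi, \bF_p)\}_{j=1}^{K(\phi, \bF_p)}$, while the corresponding spectrum for $\cf(\phi^p, \lamzero_{\bF_p, univ})$ is bounded barwise by $\{\beta_j(\phi^p, \bF_p)\}_{j=1}^{K(\phi^p, \bF_p)}$. Combining this algebraic input with the filtered quasi-isomorphism from the localization step via the isometry theorem for persistence modules, and then passing to the ind/pro-limit in the parameters $K, i$, yields a matching of finite bars proving $p\cdot\beta_{tot}(\phi, \bF_p) \leq \beta_{tot}(\phi^p, \bF_p)$.

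The main obstacle, which has already been surmounted by the preceding construction of the equivariant pair-of-pants product in the semipositive setting, is precisely the transversality and compactness required to define $\mathfrak{f}^{(K,i)}$ and to prove its $u$-localization is a quasi-isomorphism; the semipositivity hypothesis on $(M, \om)$ enters exactly at this analytic step, controlling sphere bubbling via the gap $\epsilon$ chosen below both the minimal symplectic area of $J$-holomorphic spheres and the minimal Floer energy. With that analytic input secured, everything downstream—the persistence-module algebra, the Tate localization, and the bar-length matching—is formal and identical to Shelukhin's treatment in the monotone case.
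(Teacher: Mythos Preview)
Your proposal is correct and follows essentially the same route as the paper: both arguments defer the substantive work to Shelukhin's original proof in \cite{shelukhin2022hofer}, invoking the $\Z_p$-equivariant pair-of-pants product (now available in the semipositive setting via Sugimoto's construction) together with the quasi-Frobenius map $x\mapsto x^{\otimes p}$ and the purely algebraic bar-length computation for the cyclic action on the $p$-fold tensor product. The paper's write-up is slightly more concrete in naming the intermediate modules and citing the specific lemmas (Proposition~1, Lemma~18, Corollary~23 of \cite{shelukhin2022hofer}) that yield the final inequality, but the underlying strategy is identical.
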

\begin{proof}
{\textbf{Case 1: $\phi$ is nondegenerate}}
First, note that $\hf(\phi, \lamzero_{\bF_{p}})\otimes_{\lamzero_{\bF_p}}\lamzero_{\cK}\brat{\theta}$ is isomorphic to $\rH(\Z_p, \cf(\phi, \lamzero_{\bF_{p}}))$ since the $\Z_p$-action on $\cf(\phi, \lamzero_{\bF_{p}})$ is trivial. Denote by \[\beta_1,\dots,\beta_s\] the bar-lengths of $\h(\Z_p,\cf(\phi,\lamzero_{\bF_p}))$. Then, $s=2K(\phi,\bF_p)$ and $\beta_{2i-1}=\beta_{2i}=\beta_{i}(\phi,\Lambda_{\bF_p})$ since
\begin{equation*}
	\hf(\phi,\lamzero_{\bF_p})\otimes_{\lamzero_{\bF_p}}\lamzero_{\cK}\brat{\theta}\cong (\hf(\phi,\lamzero_{\bF_p})\otimes_{\lamzero_{\bF_p}}\lamzero_{\cK})\oplus(\hf(\phi,\lamzero_{\bF_p})\otimes_{\lamzero_{\bF_p}}\lamzero_{\cK}).
\end{equation*}
Secondly, as in \cite[Section 7.2]{shelukhin2022hofer}, the map \[x\in\cf(\phi,\lamzero_{\bF_p})\mapsto x\otimes\cdots\otimes x\in\cf(\phi,\lamzero_{\bF_p})^{\otimes p}\] induces a quasi-Frobenius isomorphism \[r_{p}^{*}\h(\Z_p,\cf(\phi,\lamzero_{\bF_p}))\ra\h(\Z_p,\cf(\phi,\lamzero_{\bF_p})^{\otimes p}),\] where $r_p:\lamzero_{\cK}\ra\lamzero_{\cK}$ is the homomorphism defined by $T\mapsto T^{\frac{1}{p}}$. By adjusting by $r_p$, the isomorphism is defined over $\lamzero_{\cK}$. Denote the bar-lengths of $\h(\Z_p,\cf(\phi,\lamzero_{\bF_p})^{\otimes p})$ by $\beta_{p,1},\cdots,\beta_{p,s}$, which are also the bar-lengths of $r_{p}^{*}\h(\Z_p,\cf(\phi,\lamzero_{\bF_p}))$. Then, $\beta_{p,i}=p\cdot\beta_{i}$ by the map $r_{p}^{*}$ and $s=2K(\phi,\bF_p)$. Thus, $\beta_{p,2i-1}=\beta_{p,2i}=p\cdot\beta_{i}(\phi,\Lambda_{\bF_p})$. Next, by the equivariant pair-of-pants product, there is an isomoprhim \[\h(\Z_p,\cf(\phi,\lamzero_{\bF_p})^{\otimes p})\ra\h(\Z_p,\cf(\phi^p,\lamzero_{\bF_p}))\] over $\lamzero_{\cK}$. Denote by $\hat\beta_{1},\cdots,\hat\beta_{s}$ the bar-lengths of $\h(\Z_p,\cf(\phi^p,\lamzero_{\bF_p}))$. Then, $\hat\beta_{2i-1}=\hat\beta_{2i}=p\cdot\hat\beta_{i}(\phi,\Lambda_{\bF_p})$ as in \cite[Corollary 23]{shelukhin2022hofer}. Finally, by \cite[Proposition 17 and Lemma 18]{shelukhin2022hofer} we have that $\hat\beta_{tot}=\sum\hat\beta_i\leq2\cdot\beta_{tot}(\phi^p,\Lambda_{\bF_p})$, in particular, $2p\cdot\beta_{tot}(\phi,\Lambda_{\bF_p})\leq2\cdot\beta_{tot}(\phi^p,\Lambda_{\bF_p})$. When the fixed points are degenerate, one can argues using local equivariant Floer homology as in \cite[Section 7.4]{shelukhin2022hofer}.

\textbf{Case 2: $\phi$ is degenerate}
Assume that $\phi=\phi_H^1$ has finitely many isolated fixed points. Let $H_1$ be a $C^2$-small nondegenerate perturbation of $H$ and $\phi_1=\phi_{H_1}^1$. By Theorem~\ref{thm: perturbation}, $\text{CF}(\phi_1, \Lambda_{\mathbb{F}_p}^0)$ is $\delta_0$-quasi-equivalent to the canonical $\Lambda^0$-complex $\text{CF}(\phi, \Lambda_{\mathbb{F}_p}^0)$, and the number of bar-lengths of $\text{CF}(\phi, \Lambda_{\mathbb{F}_p}^0)$ is bounded above by a number $k(\phi):=K(\phi,{\bF_p})$ independent of the perturbation $\phi_1$. Furthermore, the bar-lengths of $\text{CF}(\phi_1, \Lambda_{\mathbb{F}_p}^0)$ and $\text{CF}(\phi, \Lambda_{\mathbb{F}_p}^0)$ are $2\delta_0$-close. We emphasize that the number of finite bars of $\text{CF}(\phi_1, \Lambda_{\mathbb{F}_p}^0)$ length smaller than $2\delta_0$, a priori, depends on the perturbation $\phi_1$. Nonetheless, for any positive integer $m$, the sum of the last $m$ bar-lengths of the bar-length spectrums of $\text{CF}(\phi_1, \Lambda_{\mathbb{F}_p}^0)$ and $\text{CF}(\phi, \Lambda_{\mathbb{F}_p}^0)$ differ by at most $2m\delta_0$. Here, if $m>k(\phi)$ we are setting the initial $m-k(\phi)$ bar-lengths of  $\text{CF}(\phi, \Lambda_{\mathbb{F}_p}^0)$ to be $0$. 

It follows that the doubled $p$-scaled bar-lengths of $\text{CF}(\phi, \Lambda_{\mathbb{F}_p}^0)$ is $p\cdot 2\delta_0$-close to those of $\text{C}(\mathbb{Z}_p, \text{CF}(\phi_1, \Lambda_{\mathbb{F}_p}^0)^{\otimes p})$ and, hence, for any positive integer $m$, the sum of the last $m$ bar-lengths of their respective bar-length spectrums differs by at most $2pm\delta_0$. 

By Section \ref{sec:deg_equiv}, $\text{C}(\mathbb{Z}_p, \text{CF}(\phi^p, \Lambda_{\mathbb{F}_p}^0))$ is $3\delta_1$-quasi-equivalent to $\text{C}(\mathbb{Z}_p, \text{CF}(\phi_1^p, \Lambda_{\mathbb{F}_p}^0))$. Since $\phi_1$ is nondegenerate, by the equivariant pair-of-pants product, $\text{C}(\mathbb{Z}_p, \text{CF}(\phi_1, \Lambda_{\mathbb{F}_p}^0)^{\otimes p})$ is $0$-quasi-equivalent to $\text{C}(\mathbb{Z}_p, \text{CF}(\phi_1^p, \Lambda_{\mathbb{F}_p}^0))$. Thus, the doubled $p$-scaled bar-lengths of $\text{CF}(\phi, \Lambda_{\mathbb{F}_p}^0)$ is $2(p\delta_0+3\delta_1)$-close to those of $\text{C}(\mathbb{Z}_p, \text{CF}(\phi^p, \Lambda_{\mathbb{F}_p}^0))$. Note that the numbers of bar-lengths of the canonical complexes $\text{CF}(\phi, \Lambda_{\mathbb{F}_p}^0)$ and $\text{C}(\mathbb{Z}_p, \text{CF}(\phi^p, \Lambda_{\mathbb{F}_p}^0))$ are bounded above by a number $k'(\phi)$ independent of the choice of the perturbation $\phi_1$. Thus, in this case, the $2(p\delta_0+3\delta_1)$-quasi-equivalence implies that 
\begin{align*}
        2p\cdot\beta_{tot}(\phi, \Lambda_{\mathbb{F}_p})-k'(\phi)\cdot 2(p\delta_0+3\delta_1)\le\beta_{tot}(\text{C}(\mathbb{Z}_p, \text{CF}(\phi^p, \Lambda_{\mathbb{F}_p}^0)))
\end{align*}
Now by \cite[Proposition 17 and Lemma 18]{shelukhin2022hofer}, we obtain
\begin{align*}
        \beta_{tot}(\text{C}(\mathbb{Z}_p, \text{CF}(\phi^p, \Lambda_{\mathbb{F}_p}^0)))\le 2\beta_{tot}(\text{CF}(\phi^p, \Lambda_{\mathbb{F}_p}^0), T^{\delta_1}d_F)
\end{align*}
Note that the number of the bar-lengths of $\text{CF}(\phi^p, \Lambda_{\mathbb{F}_p}^0)$ is bounded above by $k(\phi^p):=K(\phi^p,{\bF_p})$ independent of the perturbation $\phi_1^p$. Then
\begin{align*}
        2\beta_{tot}(\text{CF}(\phi^p, \Lambda_{\mathbb{F}_p}^0), T^{\delta_1}d_F)\le 2\beta_{tot}(\phi^p, \Lambda_{\mathbb{F}_p})+4k(\phi^p)\delta_1
\end{align*}
Thus
\begin{align*}
        2p\cdot\beta_{tot}(\phi, \Lambda_{\mathbb{F}_p})\le 2\beta_{tot}(\phi^p, \Lambda_{\mathbb{F}_p})+4k(\phi^p)\delta_1+k'(\phi)\cdot2(p\delta_0+3\delta_1)
\end{align*}
Note that $\delta_0$ tends to $0$ as $\phi_1$ tends to $\phi$ and $\delta_1=(4n+2)\delta_0$. We have 
\begin{align*}
        2p\cdot\beta_{tot}(\phi, \Lambda_{\mathbb{F}_p})\le 2\beta_{tot}(\phi^p, \Lambda_{\mathbb{F}_p})
\end{align*}
\end{proof}

%\begin{remark}
%Note that we do not need the $\mathbb{Z}_p$-equivariant pair-of-pants product for degenerate $\phi$ to prove Theorem \ref{thm: smith}.
%\end{remark}

\section{Proof of Theorem \ref{main}}
\label{sec:proof_main}
Once we have Theorem \ref{thm:upper_bound} and Theorem \ref{thm: smith} under the semipositive setting, the proof of Theorem \ref{main} is almost identical to that of \cite[Theorem A]{shelukhin2022hofer}.

\textbf{Case 1: $\text{char}\left(\mathbb{K}\right)=0$} 
%When $\phi=\phi_H$ is nondegenerate, consider the Floer chain complex $\left(\text{CF}\left(H, \Lambda_{\mathbb{K}}\right), d\right)$ and let 
%\begin{align*}
%N\left(\phi, \mathbb{K}\right) &=\dim_{\Lambda_{\mathbb{K}}}\text{CF}\left(H, \Lambda_{\mathbb{K}}\right),\\ 
%B\left(\phi, \mathbb{K}\right) &=\dim_{\Lambda_{\mathbb{K}}}\text{HF}\left(H, \Lambda_{\mathbb{K}}\right),\\
%K\left(\phi, \mathbb{K}\right) &=\dim_{\Lambda_{\mathbb{K}}}\text{Im}(d). 
%\end{align*}

Consider the canonical chain complex $\left(\text{CF}\left(\phi_H, \Lambda_{\mathbb{K}}\right), d\right)$ described in Theorem \ref{thm: perturbation} and let
\begin{align*}
    N\left(\phi, \mathbb{K}\right) &=\dim_{\Lambda_{\mathbb{K}}}\text{CF}\left(\phi, \Lambda_{\mathbb{K}}\right)=\sum_{x\in\text{Fix}(\phi)}\dim_{\mathbb{K}}\text{HF}^{loc}\left(\phi, x\right)\\
    B\left(\phi, \mathbb{K}\right) &=\dim_{\Lambda_{\mathbb{K}}}\text{H}\left(\text{CF}\left(\phi_H, \Lambda_{\mathbb{K}}\right), d\right)\\
    K\left(\phi, \mathbb{K}\right) &=\dim_{\Lambda_{\mathbb{K}}}\text{Im}(d). 
\end{align*}

As in \cite[Lemma 19]{shelukhin2022hofer}, we have 
\begin{align*}
    N\left(\phi, \mathbb{K}\right)=B\left(\phi, \mathbb{K}\right)+2K\left(\phi, \mathbb{K}\right).
\end{align*}

Recall that we assume $N\left(\phi, \mathbb{K}\right)>\dim_{\mathbb{K}}\text{H}_*\left(M; \mathbb{K}\right)$ in Theorem \ref{main}. When $\phi$ is nondegenerate, we have 
\begin{align*}
    \beta_{\text{tot}}\left(\phi, \Lambda_{\mathbb{K}}\right)>0
\end{align*}
where $\beta_{\text{tot}}$ is defined as in Section \ref{sec:SVD}.
By Lemma \ref{lem: barcode_coeff}, we have
\begin{align*}
    \beta_{\text{tot}}\left(\phi, \Lambda_{\mathbb{F}_p}\right)>0
\end{align*}
for sufficiently large $p$.

When $\phi$ is degenerate, we set $$\beta_{\text{tot}}(\phi,\Lambda_{\mathbb{K}})=\displaystyle\sum_{j=1}^{K(\phi, \mathbb{K})}\beta_j(\phi, \Lambda_{\mathbb{K}}).$$ Here, $\beta_j(\phi, \Lambda_{\mathbb{K}})$ is defined in Theorem \ref{thm: perturbation} as the limit of $\beta^\prime_j(\phi, \Lambda_{\mathbb{K}})$ where $\beta^\prime_j(\phi, \Lambda_{\mathbb{K}})$ are the bar-length of the canonical complex $\text{CF}(\phi, \Lambda_{\mathbb{K}})$. Thus we still have 
\begin{align*}
    \beta_{\text{tot}}(\phi, \Lambda_{\mathbb{K}})>0
\end{align*}
by the assumption of Theorem \ref{main}. For a nondegenerate perturbation $\phi_1$ of $\phi$, and a sufficiently large prime $p$, $\beta_{\text{tot}}(\phi_1, \Lambda_{\mathbb{F}_p})=\beta_{\text{tot}}(\phi_1, \Lambda_{\mathbb{K}})$ and $\beta_j(\phi, \Lambda_{\mathbb{K}})$ (resp. $\beta_j(\phi, \Lambda_{\mathbb{F}_p})$) is close enough to $\beta_{K^\prime+j}(\phi_1, \Lambda_{\mathbb{K}})$ (resp. $\beta_{K^\prime+j}(\phi_1, \Lambda_{\mathbb{F}_p})$). Thus,
\begin{align*}
    \beta_{\text{tot}}(\phi, \Lambda_{\mathbb{F}_p})\geq  \beta_{\text{tot}}(\phi, \Lambda_{\mathbb{K}})/2>0
\end{align*}

Now, by Theorem \ref{thm: smith},
\begin{align}\label{1}
    0<p\cdot\beta_{\text{tot}}\left(\phi, \Lambda_{\mathbb{F}_p}\right)\le\beta_{\text{tot}}\left(\phi^p, \Lambda_{\mathbb{F}_p}\right)
\end{align}
for sufficiently large $p$. By the definition of boundary depth, 
\begin{align}\label{2}
\beta_{\text{tot}}\left(\phi^p, \Lambda_{\mathbb{F}_p}\right)\le K(\phi^p, \mathbb{F}_p)\cdot \beta(\phi, \Lambda_{\mathbb{F}_p})
\end{align}
By Theorem \ref{thm:upper_bound},
\begin{align}\label{3}
    \beta(\phi, \Lambda_{\mathbb{F}_p})\le C
\end{align}
where $C$ is independent of $p$ for sufficiently large $p$.
Combining Equations \ref{1}, \ref{2}, and \ref{3}, we have
\begin{align*}
K(\phi^p, \mathbb{F}_p)\ge \dfrac{p\cdot\beta_{\text{tot}}(\phi, \Lambda_{\mathbb{F}_p})}{C}>0
\end{align*}

Let $p_0$ be a sufficiently large prime such that for all primes $p>p_0$ we have: $(1)$ the bar-length spectrum of $\phi_1$ over $\Lambda_{\bF_p}$ and over $\Lambda_{\bK}$ coincide as in Lemma \ref{lem: barcode_coeff}; $(2)$ the dimension of $\h_*(M;\bF_p)$ is equal to that of $\h_*(M;\bK)$ and $ N(\phi, \mathbb{K})=N(\phi, \mathbb{F}_p)$ as in Remark \ref{rmk: bar_field_ext}; $(3)$ the iteration $\phi^p$ of $\phi$ is admissible in sense of Theorem \ref{A}; and $(4)$ such that \eqref{3} is satisfied.

Next, assume by contradiction that there are no simple contractible fixed points of $\phi^p$ for a prime $p>p_0$. Then, we have $N(\phi^p, \mathbb{K})=N(\phi, \mathbb{K})$ for any coefficient field $\bK$ since $\phi^p$ is an admissible iteration.

Thus, we have 
\begin{align*}
    N(\phi, \mathbb{K})=N(\phi, \mathbb{F}_p)= N(\phi^p, \mathbb{F}_p) &=B(\phi^p, \mathbb{F}_p)+2K(\phi^p, \mathbb{F}_p)\\
    &\ge B(\phi^p, \mathbb{F}_p)+\dfrac{2p\beta_{\text{tot}}(\phi, \Lambda_{\mathbb{F}_p})}{C}\\
    &\ge B(\phi, \mathbb{K})+\dfrac{p\beta_{\text{tot}}(\phi, \Lambda_{\mathbb{K}})}{C}
\end{align*}
This is a contradiction whenever $$p> C':=\frac{C(N(\phi, \mathbb{K})-B(\phi, \mathbb{K}))}{\beta_{\text{tot}}(\phi, \Lambda_{\mathbb{K}})}.$$ Therefore, $\phi^p$ must have a simple contractible fixed point whenever $p>\max\{p_0,C'\}$.

\textbf{Case 2: $\text{char}(\mathbb{K})=p$}

\begin{claim}
 Assume $\qh_{ev}(M; \Lambda_{\mathbb{K}})$ is semisimple. Then $\qh_{ev}(M; \Lambda_{\mathbb{F}_p})$ is semisimple.
\end{claim}
\begin{proof}
  We have that $$\text{QH}_{ev}(M; \Lambda_{\mathbb{K}})=\text{QH}_{ev}(M; \Lambda_{\mathbb{F}_p})\otimes_{\Lambda_{\mathbb{F}_p}}\Lambda_{\mathbb{K}}.$$ Therefore, if $\text{QH}_{ev}(M; \Lambda_{\mathbb{K}})$ has a nilpotent ideal then $\text{QH}_{ev}(M; \Lambda_{\mathbb{F}_p})$ has a nilpotent ideal, which would contradict the assumption that $\text{QH}_{ev}(M; \Lambda_{\mathbb{K}})$ is semisimple. Thus, the Jacobson radical of $\text{QH}_{ev}(M; \Lambda_{\mathbb{F}_p})$ is zero, i.e. $\text{QH}_{ev}(M; \Lambda_{\mathbb{F}_p})$ is semisimple.
\end{proof}

Assume to reach a contradiction that there exists $k_0$ such that $\text{Fix}(\phi^{p^k})=\text{Fix}(\phi^{p^{k_0}})$ for all $k\ge k_0$ since otherwise $\phi$ has infinitely many contractible periodic points.

Set $\psi=\phi^{p^{k_0}}$. There is $k_1$ such that $\psi^{p^k}=\phi^{p^{k_0+k}}$ is an admissible iteration of $\psi$ for all $k\ge k_1$. Then by Theorem \ref{A}, $N(\psi^{p^{k}}, \mathbb{F}_p)=N(\psi, \mathbb{F}_p)$.

\begin{lem}
    There is an upper bound $C$, independent of $k$, on the boundary depth $\beta(\phi^{p^{k}}, \Lambda_{\mathbb{F}_p})$.
\end{lem}
\begin{proof}
    Let $E=\{e_i\}$ be a collection of idempotents in $\text{QH}_{ev}(M; \Lambda_{\mathbb{F}_p})$. Then by taking $\delta=\displaystyle\max_{i}l(e_i)$ as in the proof of Proposition \ref{prop:gamma_adapted_bound}, we have $\gamma_E\left(\phi^{p^{k}}\right)\le D$ where $D$ is independent of $k$. Then, by Proposition \ref{prop:interleaving} we can get an upper bound on the boundary depth which is independent of $k$.
\end{proof}
\begin{remark}
    In this case $C$ depends on $p$, which is different from the case where $\text{char}(\mathbb{K})=0$.
\end{remark}

Assuming that $N(\phi, \mathbb{K})>\dim_{\mathbb{K}}\text{H}_*(M; \mathbb{K})$ in Theorem \ref{main}, we have $\beta_{\text{tot}}(\phi, \Lambda_{\mathbb{F}_p})>0$ since $\bK$ is a field extension of $\bF_p$. Then, for all $k\geq k_1$, 
\begin{align*}
    0<p^{k_0+k}\beta_{\text{tot}}(\phi, \Lambda_{\mathbb{F}_p}) &\le \beta_{\text{tot}}\left(\psi^{p^{k}}, \Lambda_{\mathbb{F}_p}\right)\\
    & \le K\left(\psi^{p^{k}}, \mathbb{F}_p\right)\cdot\beta\left(\psi^{p^{k}}, \Lambda_{\mathbb{F}_p}\right)\\
    & \le K\left(\psi^{p^{k}}, \mathbb{F}_p\right)\cdot C.
\end{align*}
Thus,
\begin{align*}
    K\left(\psi^{p^{k}}, \mathbb{F}_p\right)
    \ge\dfrac{p^{k_0+k}\beta_{\text{tot}}(\phi, \Lambda_{\mathbb{F}_p})}{C}.
\end{align*}
Now,
\begin{align*}
    N\left(\psi, \mathbb{F}_p\right)=N\left(\psi^{p^{k}}, \mathbb{F}_p\right) &=B\left(\psi^{p^{k}}, \mathbb{F}_p\right)+2K\left(\psi^{p^{k}}, \mathbb{F}_p\right)\\
    &\ge B\left(\psi, \mathbb{F}_p\right)+2\dfrac{p^{k_0+k}\beta_{\text{tot}}(\phi, \Lambda_{\mathbb{F}_p})}{C}
\end{align*}
which is a contradiction for sufficiently large $k$.\qed

\bibliographystyle{alpha}
\bibliography{BibliographyHZSP}
\end{document}